\newtheorem{teo}{Theorem}[section]
\newtheorem{prop}[teo]{Proposition}
\newtheorem{lema}[teo]{Lemma}
\newtheorem{obs}[teo]{Remark}
\newtheorem{defnc}[teo]{Definition}
\newtheorem{coro}[teo]{Corollary}
\newcommand{\C}{{\mathbb C}}
\newcommand{\CP}{{\mathbb C \mathbb P}}
\newcommand{\R}{{\mathbb R}}
\newcommand{\Q}{{\mathbb Q}}
\newcommand{\Z}{{\mathbb Z}}
\newcommand{\N}{{\mathbb N}}
\newcommand{\fol}{{\mathcal F}}
\newcommand{\tilf}{{\widetilde{\mathcal F}}}
\newcommand{\tXX}{{\widetilde{X}}}
\newcommand{\wdc}{{\widetilde{\mathbb C}}}
\newcommand{\da}{\alpha}
\newcommand{\db}{\beta}
\newcommand{\dl}{\lambda}
\newcommand{\de}{\varepsilon}
\newcommand{\calh}{\mathcal{H}}
\newcommand{\cald}{\mathcal{D}}
\newcommand{\cale}{\mathcal{E}}
\newcommand{\Ff}{\mathbb{F}}
\newcommand{\calp}{{\mathcal P}}
\newcommand{\partz}{\partial /\partial z}
\begin{document}

\title{Uniformizing complex ODEs and applications}
\author{Julio C. Rebelo \; \;  \& \; \; Helena Reis}

\begin{abstract}
We introduce a method to estimate the size of the domain of definition of the solutions of a meromorphic vector field on a
neighborhood of its pole divisor. The corresponding techniques are, in a certain sense, quantitative versions of some
well-known phenomena related to the presence of metrics with positive curvature. Several applications of these ideas are
provided including a type of ``confinement theorem'' for solutions of complete polynomial vector fields on $\C^n$  and obstructions
for certain (germs of) vector fields to be realized by a global holomorphic vector field on a compact K\"ahler manifold. As a
complement, a new approach to certain classical equations is proposed and detailed in the case of Halphen equations.
\end{abstract}

\maketitle

\noindent \hspace{1.2cm} {\small AMS-Classification  37F75, 34M05, 34M55}

\bigskip

\section{Introduction}
The object of this paper is a method to investigate the domain of
definition of solutions for meromorphic vector fields.
The method is quite general in that it applies to arbitrarily high
dimensions. Yet, it provides new results already in dimension~$3$. This work is then essentially
constituted by two parts, the first one corresponding to a general setup along with the basic
estimates/results. These are then exploited in some applications that are carried out in the second part.
To greater or lesser extent, the
applications given here arise from following the solution of a (complex) polynomial/rational vector field over
``special real paths going off to infinity''.

This Introduction is aimed only at stating the main applications considered in this text. These were chosen not with a purpose of
being the sharpest possible but rather of indicating ways of exploiting the basic phenomena on which our analysis relies.
In Section~2, we shall provide a more detailed discussion explaining our point of view and underlining the
common structures behind the theorems below. It is also to be noted these applications concern very special types
of vector fields (or of differential equations) such as complete vector fields and Halphen equations. Nonetheless the setting is
also well-adapted to investigating differential equations having meromorphic solutions including several classical equations
appearing in Mathematics and Physics. For example equations concerned with the works of Painlev\'e and Chazy as well as
certain Lorenz systems \cite{lorenz}. Through the work of Ablowitz, Segur and others, some of statements, e.g. Theorems~A
and~A', can also be adapted to solutions of certain non-linear evolution equations and/or to solutions of linear integral
equations in Gelfand-Levitan-Marcenko theory, see \cite{Ablowitz-1}, \cite{Ablowitz-2}. More details on these issues can be
found in Section~2. Whereas these connections will not be developed here,
they provide a clear indication that further applications of our techniques may be found in the future.

Let $X$ be a polynomial vector field of degree at least~$2$ on $\C^n$. Suppose that $X$ is complete i.e.
its complex solutions are defined for all $T \in \C$. When $X$ happens to be {\it completely integrable}\, i.e. when it admits $n-1$
independent first integrals, its orbits can be compactified into
rational curves by adding to them some ``singular points at infinity''. This fact can be interpreted as a type of confinement
phenomenon for the corresponding solutions. Our first result is a sharp, whereas weaker, generalization of this confinement phenomenon
to every complete polynomial vector field.
To state it, we proceed as follows. Being polynomial, $X$ defines a singular holomorphic foliation $\cald$ on $\C P(n) = \C^n \cup \Delta_{\infty}$
viewed as a compactification of $\C^n$. Consider a leaf $L$ of $\cald$
(details on the definition of ``leaf'' in the singular context
can be found in Section~2.2). On $L$ two (singular) oriented real one-dimensional foliations $\calh, \, \calh^{\perp}$
are going to be defined. They will depend on the leaf $L$ of $\cald$ in a regular way as it will be apparent from
their definitions (cf. Sections~3 and~6). More importantly, $\calh, \, \calh^{\perp}$
are mutually orthogonal with respect to the conformal structure of $L$, in fact, they agree respectively with the
real foliation and the purely imaginary foliation induced on $L$ by a certain Abelian form. Since $L$ is endowed
with a conformal structure, it makes sense to define also foliations $\calh^{\theta}$ whose (oriented) trajectories
makes an angle~$\theta$ with the oriented trajectories of $\calh$ ($\theta \in [-\pi /2, \pi /2]$). The trajectories of
these foliations define the ``directions of confinement'' for $L$ as will follow from Theorem~A below. In the sequel
$\Phi : \C \times \C^n \rightarrow \C^n$ stands for the holomorphic flow generated by $X$ whereas ${\rm Sing}\, (\cald)
\subset \C P(n)$ denotes the singular set of $\cald$.

\vspace{0.1cm}

\noindent {\bf Theorem~A}. {\sl Suppose that $X$ is a complete polynomial vector field of degree at least~$2$ on $\C^n$.
Fix an arbitrarily small neighborhood $V$ of $({\rm Sing}\, (\cald) \cap \Delta_{\infty}) \cup
{\rm Sing}\, (X)$ in $\C P(n)$ and suppose we are given a point $p \in \C^n$, $X (p) \neq 0$, and an angle $\theta \in
(-\pi/2 ,\pi/2)$. Denote by $L_p$ (resp. $l_p^{+,\theta}$) the leaf of $\cald$ through $p$ $($resp. the semi-trajectory of
$\calh^{\theta}$ initiated at $p)$ and consider the lift $c:[0, \infty) \rightarrow \C$ of
$l_p^{+,\theta}$ by $\Phi$, i.e. $t \in [0, \infty) \mapsto
\Phi (c(t), p)$ is a one-to-one parametrization of $l_p^{+,\theta}$ $(c(0)=0)$. Then there is a constant $C$ such that
$$
{\rm meas} \, \left( \{ t \in [0, \infty) \; ; \;  \Phi (c(t), p) \not\in V \} \right) < C \, ,
$$
where ${\rm meas}$ stands for the usual Lebesgue measure on $\R$.
}

\vspace{0.1cm}

The preceding theorem states that the trajectory $l_p^{+,\theta}$ spend most of its ``life'' in the
neighborhood $V$ and hence arbitrarily close to the singular points of $\cald$ or of $X$. Furthermore the
constant $C$ varies continuously with $\theta$. In particular, if we consider a compact interval $[-\pi/2 + \delta ,
\pi /2 - \delta] \subset (-\pi/2 ,\pi/2)$ then $C$ can be chosen so that the above estimate holds
for every $\theta \in [-\pi/2 + \delta , \pi /2 - \delta]$. The existence of $C$ uniform for
$\theta \in [-\pi/2 + \delta , \pi /2 - \delta]$ allows us to generalize the statement to paths
$c \subset L_p$ more general than the trajectories of $\calh^{\theta}$, for example we may consider
paths $c$ as before such that the angle made at the point $c(t)$ by the speed vector $c'(t)$ and the
foliation $\calh$ lies in $[-\pi/2 + \delta , \pi /2 - \delta]$ for all $t$. The interested reader will have
no difficulty in adapting the statement of Theorem~A to these more general situations.

Confinement phenomena are in stark contrast with ergodicity so that it is natural to search for a variant of Theorem~A
focusing on the ``area'' defined in $\C$ by those values of $T$ for which $\Phi (T,p) \in V$. This variant might be viewed,
in particular, as a ``super non-ergodic'' phenomenon for complete vector field. To
state it, let $B_r \subset \C$ denote the disc of radius $r$ about $0 \in \C$. A continuous path properly
embedded $c: (-\infty, \infty) \rightarrow \C$ is a {\it separating curve} if it is of class $C^{\infty}$
with possible exception of a discrete set and it is either periodic or it satisfies the condition
$\lim_{t \rightarrow -\infty} c(t) = \infty$, $\lim_{t \rightarrow \infty} c(t) = \infty$ (where the last
conditions means that the curve eventually leaves every compact subset of $\C$). A separating curve divides
$\C$ in at least two connected component and at least one of these components is unbounded. Then we have:

\vspace{0.2cm}

\noindent {\bf Theorem~A'}. {\sl Let $X$, $V$, $L_p$ and $p \in \C^n$ be as in the statement of Theorem~A. Consider the
parametrization of $L_p$ by $\C$ (possibly as a covering map) which is given by $\Phi_p (T) = \Phi (T, p)$. Then there
exists a separating curve $c: (-\infty, \infty) \rightarrow \C$, $\Phi_p(c(0)) =p$, and an unbounded component $\mathcal{U}^+$
of $\C \setminus c(t)$ such that the following holds: the set $\mathcal{T}_V \subset \mathcal{U}^+ \subset \C$
defined by
$$
\mathcal{T}_V = \{ T \in \mathcal{U}^+ \subset \C \; ; \; \Phi (T,p) \in V \} \,
$$
satisfies
$$
\lim_{r \rightarrow \infty} \frac{ {\rm Meas}\, (\mathcal{T}_V \cap B_r)}
{{\rm Meas}\, (\mathcal{U}^+ \cap B_r)} =1 \, ,
$$
where ${\rm Meas}$ stands for the usual Lebesgue measure of $\C$ ($\simeq \R^2$).}

\vspace{0.1cm}

The above statement contrasts markedly with various equidistribution results obtained by Fornaess-Sibony and
also studied by X. Gomez-Mont and his collaborators, see \cite{sibony-1}, \cite{sibony-2}, \cite{xavier1}.
The reader will note that these authors  work in a generic setting having ``hyperbolic nature'' whereas the
previous statements are closer to the non-generic ``parabolic'' case.

Unlike most standard averaging theorems, the statement above holds for every point
$p \in \C^n$ and not only for almost every point. Besides, it is easy to conclude from the proof
given in Section~6.1 that for almost all points $p$ the corresponding separating curve is smooth.

In fact, the above mentioned separating curve has a natural interpretation as a geodesic
for a suitable singular flat structure on $\C$.
Furthermore this (singular) flat structure on $\C$ has ``bounded geometry'' in a natural sense despite the lack
of compactness of $\C$. The nature of this ``bounded geometry'' issue deserves some additional comments (the
reader is referred to Section~6.1 for details). The notion of bounded geometry is, indeed, related to
the analogous statement concerning the leaves of a (regular) foliation defined on a compact manifold, cf. for example \cite{psullivan}.
More precisely, consider for a moment a regular foliation defined on some compact
manifold $M$ and some geometric object, for example a Riemannian metric, defined on $M$. The restriction of
this metric to a (possibly non-compact) leaf $L$ of the foliation in question must have ``bounded geometry'' (in the
case of Riemannian metrics this means that
the injectivity radius and the various curvatures are bounded) regardless of whether or not $L$ is compact.
This classical observation boils down to the fact that the coefficients of the metric are, ultimately,
defined on the compact manifold $M$ and therefore are ``bounded'' in a natural sense. Since only the compact
nature of the ambient manifold $M$ plays a role in the discussion, the same argument
also applies for ``foliated objects'', such as Riemannian metrics defined only on the tangent bundle of
the foliation or, more directly, defined on the corresponding leaves (provided that
they vary ``continuously'' from leaf to leaf).
Clearly, none of this needs to hold if the ambient manifold $M$ is not compact. Having recalled these simple
facts, let us go back to our complete polynomial vector field defined on $\C^n$. Whereas $\C^n$ is not compact, it can
be compactified into $\C P (n)$ and, as already pointed out, the foliation $\cald$ associated to $X$ extends to
$\C P(n)$. As far as ``bounded geometry'' for the leaves of $\cald$ is concerned, the issue is then to decide
whether or not the ``geometric object'' in question can be extended to all of $\C P(n)$ as well. For example,
considering the standard setting where the leaf $L$ is contained in $\C^n$, this leaf is endowed with a flat structure,
or equivalently, with a (singular) abelian form (called time-form) induced by duality with the restriction of $X$
to $L$ itself. This abelian form, however, does not extended to $\C P(n)$ since $X$ has poles on the hyperplane at
infinity $\Delta_{\infty}$ and, as a matter of fact,
the geometry arising from the abelian form in question is not ``bounded'' in general.
Nonetheless, the flat structure for which the above mentioned separating curve happens to be a geodesic does have
an extension to $\Delta_{\infty}$, cf. Remark~\ref{remarkboundedgeometry} in Section~6. Furthermore the ``extended'' flat structure
still varies ``continuously'' with the leaves which, in turn, guarantees the existence of ``bounds'' for the
corresponding geometry.

Let us now go back to the statement of Theorem~A'. The ``bounded geometry'' nature the preceding flat structure
implies, in particular, that ${\rm Meas}\, (\mathcal{U}^+ \cap B_r)$
is, in fact, comparable to the euclidean measure of large discs $B_r$. Since ``large discs'' are also used
in the construction of Ahlfors currents, the previous statement
may look a bit surprising since these currents do not charge singular points. Explanation for this difference is however easy
since the construction of Ahlfors currents is based on the ``global volume'' of a leaf and this may have little relation with
the asymptotic behavior of an actual solution. To be more precise, fix a diffeomorphism  between $\C$ and a leaf $L$,
for example a time-$t$ diffeomorphism $\Phi_t$ induced by the corresponding vector field. To construct Ahlfors current
the ambient manifold is equipped with a Hermitian metric which is then pulled back
by $\Phi_t$ to yield a metric $d_{\C}$ on $\C$. The desired current is then constructed by choosing a suitable
sequence of discs $B_{r_i}$ whose radii $r_i$ are measured with
respect to $d_{\C}$ and satisfy $r_i \rightarrow \infty$. Clearly a ``small'' neighborhood $V$ of a singular point in $M$ has
small diameter for the fixed Hermitian metric and so does a connected component $(L \cap V)_0$ of $L \cap V$. The diameter
(resp. the ``area''), of $\phi_t^{-1} ((L \cap V)_0)$ w.r.t. $d_{\C}$ is therefore small
as well. Now, it should be noted that $d_{\C}$ may differ markedly from the euclidean metric on $\C$ so that
the euclidean area of $\phi_t^{-1} ((L \cap V)_0)$ might be ``large''. The proofs of the preceding theorems
will make it clear that this phenomenon is precisely what happens in these cases.
As a conclusion, whereas Ahlfors currents are among the most efficient tools for studying (singular) holomorphic
foliations possessing leaves covered by $\C$, they might be less so when the main object in study is the actual solution
of a differential equation.

The statement of Theorems~A and~A' indicate that the structure of the singularities of $\cald$ lying in $\Delta_{\infty}$ must
bear significant information on the global dynamics of corresponding vector fields. This is an idea that can be thought of as a principle
similar to Painlev\'e's test for differential equations having ``meromorphic solutions'' which actually constitutes a context
where our methods can equally well be applied. In a sense, this might provide an element of explanation for the remarkable
effectiveness of Painlev\'e's test. In any case, letting this principle guide us, it is natural to wonder that complete vector fields whose associated foliations $\cald$ have only
``simple singularities'' in $\Delta_{\infty}$ must be amenable to a detailed global analysis.
Throughout this paper, by ``simple'' singularity, it is meant
the following types of singular points $q \in \Delta_{\infty}$ for $\cald$:
\begin{enumerate}
\item Non-degenerate singularities: this means that $\cald$ can locally be represented by a vector field having
non-degenerate linear part at $q$ (i.e. the Jacobian matrix of $X$ at $q$ is invertible, equivalently, it possesses $n$
eigenvalues different from zero). Besides, since resonances may arise, we assume that $q$ is {\it not of Poincar\'e-Dulac type},
i.e. if all the eigenvalues of $\cald$ at $q$ belong to $\R_+^{\ast}$ then $\cald$ must be locally linearizable about $q$.

\item Codimension~$1$ saddle-nodes: these are singularities of $\cald$ lying in $\Delta_{\infty}$
whose eigenvalue associated to the direction transverse to $\Delta_{\infty}$ is equal to {\it zero} whereas it has $n-1$
eigenvalues different from zero and corresponding to directions contained in $\Delta_{\infty}$. Again we require that
the $(n-1)$-dimensional singularity induced on the plane $\Delta_{\infty}$ should not be a singularity of Poincar\'e-Dulac type.
\end{enumerate}

Note that singular points of $\cald$ as in item~(1) above are necessarily isolated though this is no longer the case for
Codimension~$1$ saddle-nodes since these singularities may be contained in
a curve of singularities of $X$ transverse to $\Delta_{\infty}$.

Before stating Theorem~B, we need to fix some terminology. Recall that in algebraic geometry a {\it rational fibration}\,
on a manifold $M$ consists of a non-constant holomorphic map $f : M \rightarrow N$, where $N$ has dimension one less than the
dimension of $M$, which satisfies the following two conditions:
\begin{itemize}

\item The map $f$ must be a submersion away from a codimension~$1$ subset of $M$.

\item The fiber over a ``generic'' point $p \in N$ must be a rational curve.
\end{itemize}
This definition still makes sense when $f$ is defined only on the complement of an analytic subset $S$ with codimension
at least~$2$ in $M$, modulo a possible ``compactification'' of the fibers over generic points in $M$.
To avoid confusion with established literature, this slightly more general structure is going to be called a {\it completely
integrable rational foliation}\, (the name non-linear pencil might also be appropriate, if a bit confusing).
Note that given a completely integrable rational foliation on $M$, it is possible to birationally modify $M$ so as to turn
the foliation in question into an actual rational fibration naturally induced by the initial map $f$. Next, we have:

\vspace{0.1cm}

\noindent {\bf Theorem~B}. {\sl Let $X$ be a complete polynomial vector field on $\C^n$ whose singular set has codimension at
least~$2$. Suppose that all singularities of $\cald$ lying in $\Delta_{\infty}$ are as in items~(1) or~(2)
above. Then the foliation $\cald$ associated to $X$ is a completely integrable rational
foliation on $\C P(n)$, i.e. $X$ is completely integrable.}

\vspace{0.1cm}

Note that the assumption that $X$ is {\it complete} as vector field is indispensable for the preceding statement
and cannot be replaced by other weaker standard notions such as the slightly weaker condition of having
``meromorphic solutions defined on $\C$''. In fact, consider the pair of commuting vector fields given by
\begin{eqnarray*}
Z_0  & = & (-3x^2 +y^2 +2xz) \frac{\partial}{\partial x} + 2y(-3x +2z) \frac{\partial}{\partial y} + 2z (3x-z) \frac{\partial}{\partial z} \\
Z_{\infty} & = & 2y (-x+z) \frac{\partial}{\partial x} + (3x^2 -y^2) \frac{\partial}{\partial y} + 2yz \frac{\partial}{\partial z} \, .
\end{eqnarray*}
Consider also the linear span of $Z_0, \, Z_{\infty}$, i.e. all vector fields that are obtained as a linear
combination of $Z_0, \, Z_{\infty}$. It is shown in~\cite{guillotCRAS} that the solutions of every element in this family of
vector fields are meromorphic functions defined on all of $\C$. In other words, these vector fields are ``very close''
to complete vector fields. In addition, every two members of this family have essentially the same
simple singularities on $\Delta_{\infty}$ and these are simple in the above indicated sense. Nonetheless, this family
contains an infinite set of vector fields whose underlying foliations are not completely integrable in the sense
that their leaves cannot be compactified into Riemann surfaces (and in particular they cannot be compactified into
rational curves). This example sheds light on the importance of the assumption on ``completeness'' made in
the statement of Theorem~B.

Theorem~B will be proved in Section~6.2. The statement of this theorem may be compared to results of
\cite{bruno} for complete polynomial vector fields on $\C^2$. It is to be noted that the results of \cite{bruno}
chronologically preceded the classification obtained in \cite{marco3}. Also, the more recent paper
\cite{guillotreb} contains a general classification theorem for meromorphic vector fields admitting ``maximal solutions''
on algebraic surfaces and these include complete vector fields as in \cite{marco3}. All these
questions are however wide open for $n \geq 3$ and Theorem~B appears as a contribution to
them.

To have a better appreciation of the difficulties involved in these problems, following \cite{guillotreb},
let us consider the case of {\it semi-complete vector fields}, i.e. vector fields admitting maximal solutions.
Recall that a vector field is said to be semi-complete on a domain $U$ if its solution $\phi$ verifying $\phi (0) =p \in U$
is defined on a {\it maximal domain}\, of $\C$ for all $p \in U$. Here a domain $V \subseteq \C$ where the solution $\phi$
is defined is said to be {\it maximal}\, if for every point $\hat{T}$ in the boundary $\partial V$ of $V$ and every
sequence $\{ T_i \} \subset V$ such that
$T_i \rightarrow \hat{T}$, the sequence $\phi (T_i)$ leaves every compact set in $U$, cf. Section~2.2
for further details. Clearly, a complete vector field is automatically semi-complete since we can take $V = \C$ so as to have
$\partial V =\emptyset$. If polynomial semi-complete vector fields on $\C^n$ are
considered, then even the quadratic homogeneous case
is already hard to understand once $n \geq 3$. In fact, A. Guillot has conducted detailed research
about semi-complete quadratic homogeneous vector fields in
\cite{guillotFourier}, \cite{guillotIHES}. In \cite{guillotFourier}, by building on a certain variant of Painlev\'e test,
he introduced certain lattices (of coefficients) where all these vector fields are to be found whereas in
\cite{guillotIHES} he studied the special case of Halphen's
vector fields and the related problem about actions of ${\rm PSL}\, (2, \C)$ on compact $3$-manifolds. The
beauty and depth of these results motivated us to try to apply our techniques to vector fields satisfying the conditions
stated in \cite{guillotFourier} which will be said to belong to the Painlev\'e-Guillot lattice (the reader interested
in the case $n = 3$ is referred to \cite{guillotThesis} for a specially detailed discussion).

Note that a semi-complete vector field of $\C^3$ belonging to Painlev\'e-Guillot lattice may not be complete and,
moreover, its orbits (thought of as leaves of the associated foliation) may be hyperbolic Riemann surfaces. The
latter situation actually occurs with Halphen vector fields except for a few special cases cf. \cite{guillotIHES},
\cite{guillotThesis} or Section~7.2. In this Introduction, by a Halphen vector field it is always meant a
``hyperbolic'' Halphen vector field, cf. Section~7.2.
These vector fields are semi-complete and the maximal domains of definitions for their solutions are either
a bounded region of $\C$ or a hyperbolic unbounded region (for example the complement of a disc).

In the various classification results obtained by Guillot on quadratic semi-complete vector fields,
see \cite{guillotThesis}, \cite{guillotFourier}, \cite{guillotChazy}, there is a special class of ``exceptional
cases'' whose dynamics is very hard to be understood. In this direction, our methods allow us to say something
of non-trivial about these dynamics by considering the existence of {\it dicritical singularities at infinity}\,
for vector fields belonging to Painlev\'e-Guillot lattices. Given a vector field $X$ in a
Painlev\'e-Guillot lattice, a singular point for the associated foliation lying in the hyperplane at infinity
which has all its eigenvalues contained in $\R_+$ will be called a
{\it dicritical singularity at infinity}\, for $X$. Now we have:

\vspace{0.2cm}

\noindent {\bf Theorem~C}. {\sl Suppose that $X$ is a holomorphic vector field defined on compact manifold $M$.
Consider a singularity $p \in M$ of $X$ and denote by $X_k$ the first non-zero homogeneous
component of the Taylor series of $X$ at $p$. Suppose that one of the following condition holds:

\noindent \hspace{1.0cm}  $\bullet$ $X_k$ is a vector field in Painlev\'e-Guillot lattice having no
dicritical singularity at infinity.

\noindent \hspace{1.0cm}  $\bullet$ $X_k$ is a hyperbolic Halphen vector field (in case $M$ has dimension~$3$).

\noindent Then $M$ does not a carry a K\"ahler structure.}

\vspace{0.1cm}

Note that the second item of Theorem~C is sharp in the sense that \cite{guillotIHES} contains examples of compact
$3$-manifolds equipped with a global holomorphic vector field exhibiting the singularity of a hyperbolic Halphen
vector field. Naturally the corresponding manifolds are not K\"ahler.

As to the first item, we are aware of no explicit example of vector field having no dicritical singularity at infinity
and these do not exist for $n=2$. They are unlikely to exist for $n=3$, though we have no clear idea of what
may happen in higher dimensions. In fact, from the known (low-dimensional) cases, it appears that the quadratic vector fields
in question have a ``tendency'' to exhibit dicritical singularities at infinity. In this sense, as stated,
the first item of Theorem~C may be void. However, there is a number of immediate ways to turn this item into meaningful
statements about the dynamics of the vector field in question when dicritical singularities at infinity are present,
cf. Section~2.2. For example, we have:

\vspace{0.2cm}

\noindent {\bf Theorem~C'}. {\sl Suppose that $X$ is a holomorphic vector field defined on compact
K\"ahler manifold $M$ of dimension~$n \in \N$. Consider a singularity $p \in M$ of $X$ and denote by $X_k$
the first non-zero homogeneous component of the Taylor series of $X$ at $p$. If $X_k$ belongs to a
Painlev\'e-Guillot lattice, then $X_k$ has dicritical singularities at infinity. Furthermore, every regular
leaf of the foliation induced by $X_k$ on $\Delta_{\infty}$ must pass through one of these dicritical
singular points.}

\vspace{0.1cm}

Theorems~C and~C' will be proved in Section~7. The proofs are, indeed, very short though based on the material
developed in the preceding sections. The rest of the section will be taken up by a discussion of the main dynamical
issues associated to Halphen vector fields. The results presented there are definitely not new as they can
all be found in \cite{guillotIHES}
together with a large amount of additional information. Yet, the discussion conducted in Section~7 makes the
article self-contained in the sense that all properties of Halphen vector fields needed by Theorems~C
and~C' are proved here.
Besides, we have two additional motivation to carry out the mentioned analysis. Our first motivation
has to do with the well-known fact that, in the context of differential equations without movable critical points,
there is the phenomenon of ``natural boundaries''. When this phenomenon is regarded from the point of view of semi-complete
vector fields, it simply means
that the maximal domain of definition of the solution is bounded in $\C$. This is precisely what happens in the case of
Halphen vector fields. Whereas the statement of Theorems~A and~A' can straightforwardly be adapted to solutions of differential
equations that are meromorphic functions on $\C$, it is unclear that our method provides information in the case of solutions
having a ``natural boundary''. This leads us to work out the discussion of Halphen vector fields to show how the presence
of an associated fibration can be combined with ideas from Kleinian group theory to yield new insights in these cases
as well. A second motivation is that our discussion leads to a generalization of this picture in terms of representations
of ${\rm SL}\, (2,\C)$ in higher dimensions. Indeed, this paper ends with an Appendix containing
some questions for which we believe the ideas developed in the course
of this work may be useful. These questions include non-free representations of ${\rm SL}\, (2,\C)$.

\vspace{0.1cm}

\noindent {\bf Acknowledgements}: Both authors are very grateful to A. Guillot
for many discussions concerning several objects present in this work and, in
particular, for having explained to us many issues in \cite{guillotIHES}. Thanks are also due to the anonymous
referee for very valuable comments and suggestions. Discussions with X. Gomez-Mont concerning complete
real vector fields and Lorenz equations also improved our understanding of the material. Finally additional
comments by F. Cano and J.-P. Ramis have contributed to enlarge the field of possible future applications of this work.

The second author is partially supported by Funda\c{c}\~ao para a Ci\^encia e
Tecnologia (FCT) through CMUP, through the Post-Doc grant SFRH/BPD/34596/2007 and through
the project PTDC/MAT/103319/2008.


\section{Overview of methods, further results and background material}

This section contains a general description of the structure of the paper as well as some ``qualitative'' explanation
of our techniques. ``Quantitative'' information required by the corresponding proofs will be supplied in the subsequent
sections. Some complements to the theorems stated in the Introduction will also be provided along with
background material on semi-complete vector fields.

\subsection{Methods and results}

First, a general point should be made about the vector fields and/or differential equations considered in this work. This is due to the
fact that they are far from being ``generic''. For example, complete vector fields on $\C^n$ are very ``non-generic''
among polynomial/rational vector fields or among singular
holomorphic foliations on projective spaces. Indeed, the leaves of a foliation on $\C P(n)$ induced by a complete vector field
are quotients of $\C$ as Riemann surfaces and this, by itself, is already very ``non-generic''. Whereas they are ``non-generic'',
their interest can hardly be questioned since, for example, they constitute a natural Lie algebra for
the group of algebraic automorphisms of $\C^n$ \cite{lempert} and remain an object of intensive study as shown by the recent works
of A. Bustinduy, L. Giraldo, Brunella and others (cf. \cite{giraldo1}, \cite{giraldo2}, \cite{marco3}). Actually, when working with differential equations, we often encounter very special
(i.e. ``non-generic'') examples that turn out to play crucial roles in the theory. Apart from complete vector fields, our techniques
also apply to semi-complete ones, i.e. to those vector fields admitting ``maximal solutions'' defined on subsets of $\C$
(cf. below and Section~2.2 for further details). Halphen vector fields as studied in \cite{guillotIHES} satisfy this
condition and they will be revisited in Section~7. The importance of Halphen vector fields is undisputed
since they appear in Mathematical Physics, in the study of Ricci flow on
homogeneous spaces as well as in number theory through the celebrated functions
$P, \, Q , \, R$ of Ramanujan. Yet another class of special equations/vector fields that fits in the pattern of our theory consists of
those equations having meromorphic solutions defined on all of $\C$. In fact, modulo straightforward adaptations,
the statements of Theorems~A and~A' still apply to this class of equations.
Here the reader is also reminded that the class of differential equations
with meromorphic solutions includes the Painlev\'e~1, 2, 4 equations, the ``modified''
Painlev\'e~3 and~5 equations as well as many Chazy equations. As already mentioned, linear integral equations in
Gelfand-Levitan-Marcenko form also lead to equations having ``P-property'' that can similarly be treated,
see \cite{Ablowitz-1}, \cite{Ablowitz-2}.
Even in the case when the solutions possess a ``natural boundary'', and therefore are not defined on all of $\C$,
our methods can sometimes be used. An worked out example of this situation is provided by our discussion of Halphen
vector fields in Section~7. Another direction that is left
for future investigation concerns the connections of our work with the point of view developed by  X. Gomez-Mont and his collaborators,
see \cite{xavier1}, \cite{xavier2} concerning in particular the real Lorenz attractor for which a
``real'' variant of our method seems to yield new information.

Let us now begin to outline the structure of this paper.
Consider a polynomial vector field $X$ on $\C^n$ and denote by $\cald$ the associated foliation induced on $\C P(n)$. Let
$X_d$ stand for the top-degree homogeneous component of $X$ (having degree~$d \geq 2$) and suppose that $X_d$ is not a multiple
of the radial vector field. Under this assumption, the foliation $\cald$ leaves the hyperplane at infinity $\Delta_{\infty}
= \C P(n) \setminus \C^n$ invariant. In addition, and modulo a minor remark discussed in Section~3, this foliation coincides with
the foliation induced on $\Delta_{\infty}$ by $X_d$. Alternatively, and modulo the natural identification
$\Delta_{\infty} \simeq \C P(n-1)$,
the foliation in question is simply given by the direction of $X_d$ projected on $\C P(n-1)$ viewed as the space of radial lines
in $\C^n$ (note that $X_d$ is homogeneous and it is not a multiple of the radial vector field). Yet, a third way to see this
foliation consists of noting that it coincides with the foliation induced on the
exceptional divisor $\Delta_0 \simeq \C P(n-1)$ by the punctual blow-up of $X_d$ at the origin.
The foliation associated to $X_d$ on $\C P (n)$ is going to be denoted by $\fol$ and its restriction to $\Delta_{\infty}$
by $\fol_{\infty}$. If $L_{\infty}$ is a leaf of $\fol_{\infty}$ then the ``cone over $L_{\infty}$'' is invariant by $\fol$.

Fundamentally, our method relies on estimating the ``speed'' of the vector field $X$ near $\Delta_{\infty}$. This is done
in two steps. The first step consists of eliminating the unbounded factor of $X$ over $\Delta_{\infty}$
so as to obtain a ``local regular vector field'' about every regular point $p \in \Delta_{\infty}$ of $\fol_{\infty}$.
However, it turns out that these locally defined
vector fields depend to some extent on the choice of local coordinates so that they do not
patch together in a ``foliated'' global vector field. Nonetheless,
two local representatives obtained through overlapping coordinates differ only by a multiplicative
constant. This means that this collection of local vector fields defines
a global affine structure (induced by $X_d$ or by $X$) on every leaf of
$\fol_{\infty}$. In other words, the foliation $\fol_{\infty}$ can be equipped with a {\it global foliated
affine structure}\, though this affine structure does not give rise to a ``global foliated vector field''. Another
version of this affine structure already appeared in \cite{guillotFourier} as well as in a previous work of
the first author \cite{julioMex}
under the name of ``renormalized time-form''. It also plays an important role in \cite{guillotreb}.
In our context, the interest of the mentioned affine structure arises from the fact that
it lends itself well to provide estimates for the flow of $X$ as long as accurate
estimates for the ``distance'' from the orbit in question to $\Delta_{\infty}$ are available.

Here comes the second ingredient of our construction, namely a quantitative measure of ``the rate of approximation'' of a leaf
of $\fol$ to $\Delta_{\infty}$. Because $\Delta_{\infty} \subset \C P(n)$ and the Fubini-Study metric on $\C P(n)$ has positive
curvature, it is well-known that complex submanifolds always bend themselves towards $\Delta_{\infty}$, cf. for example
\cite{langevin}. In our case, this implies that the distance
(relative to the Fubini-Study metric) of a leaf $L$ of $\fol$ to $\Delta_{\infty}$ can never reach a local minimum unless this
minimum is {\it zero}. Our mentioned second ingredient is reminiscent from this remark. Actually, we shall
use the euclidean metric on suitably chosen affine coordinates, as opposed to the globally defined Fubini-Study metric.
The advantage of our choice lies in the fact that
the euclidean metric is better adapted to work with the above mentioned affine structure.
Besides, by exploiting the fact that the submanifolds in questions are actual leaves of a foliation,
a quantitative version of the rate of approximation of a leaf to $\Delta_{\infty}$ is derived. The phenomenon goes essentially as
follows. At each regular point $p$ of a leaf $L$ of $\fol$ there is the steepest descent
direction of $L$ towards $\Delta_{\infty}$, namely the negative of the gradient of the distance function restricted
to $L$. This yields a singular real
one-dimensional oriented foliation $\calh$ on $L$. Furthermore the conformal structure of $L$ is such that the foliation
$\calh^{\perp}$ orthogonal to $\calh$ is constituted by level curves for the mentioned distance function. Roughly speaking,
an exponential rate of approximation for $L$ to $\Delta_{\infty}$ over the trajectories of $\calh$ can then be obtained.
This estimate combines to
the ``uniform'' estimates related to the foliated affine structure to produce accurate estimates for the time
taken by the flow of $X$ over trajectories of $\calh$. The discussion actually shows that the time taken by $X$ to cover an entire
(infinite) trajectory is {\it finite} provided that the trajectory remains away from the singularities of $\fol$
lying in $\Delta_{\infty}$. This results is then sharpened in Section~5 by allowing the trajectory to accumulate on (simple) singular
points and still obtaining an analogous estimate. In particular, there is only one special type of ``simple'' singularity that
may yield an ``endpoint'' for the trajectories of $\calh$ and, in this case, this will be an intersection point between the leaf
$L$ and the hyperplane $\Delta_{\infty}$: the corresponding trajectory of $\calh$ should then be thought of as being
``finite''. Finally, switching back and forward between estimates involving $X_d$ and estimates involving
$X$ is not hard since $X$ is close to $X_d$ near $\Delta_{\infty}$.

The material mentioned above is covered in Sections~3,~4 and~5. Armed with these results we begin in Section~6 to prove
the theorems stated in the Introduction. Theorems~A and~A' are very natural. Since $X$ is complete
its integral over a trajectory of $\calh$ cannot converge. Besides this trajectory can never ``reach'' $\Delta_{\infty}$ since
$X$ is complete on $\C^n$. This observation tends to clash
with our previous estimate asserting convergence of the integral in question as long as
the corresponding trajectory remains away from the singularities of $\fol$ (or $\cald$) lying in $\Delta_{\infty}$.
The apparent contradiction is then explained by the fact that the flow of $X$ spends all but a finite amount of its existence
in arbitrarily small neighborhoods of the singular set.
The proof of Theorem~A' goes along similar lines. In fact, the estimates
carried over trajectories of $\calh$ remain valid for every oriented foliation $\calh^{\theta}$ forming an angle $\theta \in
(-\pi/2 , \pi/2)$ with $\calh$. Once again the foliations $\calh^{\theta}$ are well-defined since the leaves of
$\fol, \, \cald$ are endowed with a conformal structure. Modulo fixing a base point and using the obvious identifications,
the union of the corresponding trajectories span an unbounded region of $\C$ viewed as the domain
of definition of the solution in question. The area of this region is ``large'' since it is comparable
to the area of ``large discs''.

The above mentioned theorems are clearly sharp since the complement of a compact part of a solution cannot
``globally'' confine at singular points unless the vector field is completely integrable. Indeed, owing to
Remmert-Stein theorem, this type of confinement would mean that the solution is contained in a rational curve
and, in turn, if ``most'' solutions are contained in rational curves then the
underline foliations must have all its leaves contained in rational curves, i.e.
it must be a {\it completely integrable rational foliation}. There are however simple
complete polynomial vector fields, such as
$y \partial /\partial y + xy [ x \partial /\partial x - y \partial /\partial y ]$
on $\C^2$, whose orbits accumulate on all of the ``line at infinity''. These orbits
are therefore non-compact.

In view of Theorems~A and~A', it is natural to imagine that the singular set of $\cald$ contains
significant amount of information about
the global geometry of complete polynomial vector fields. Theorem~B is a contribution to the study of these
vector fields as well as a test for the extent to which their global dynamics can be determined by the structure of their
singularities. From an abstract point of view, this may be seen as a first attempt to understand the remarkable
effectiveness of the so-called Painlev\'e's test in differential equations. In fact, since Theorems~A,~A' can be adapted to the context
of differential equations having meromorphic solutions, we may expect that the local information concentrated in the singular points
is likely to strongly influence the global behavior of the solution itself.
To substantiate this ``principle'', the idea will be
to consider complete vector fields having ``simple singularities'' and to check what can then be said about the
vector field in question. From this point of view, Theorem~B is totally satisfactory since the dynamics of the corresponding vector
field is fully determined.

The proof of Theorem~B is arguably the most elaborate application of our techniques.
Let us briefly describe its main ingredients. The central difficulty is to guarantee the existence of a
``dicritical singularity'' for $\cald$ in $\Delta_{\infty}$, i.e. a linearizable singularity all of whose
eigenvalues belong to $\R_+$. The existence of this type of singularity implies, in particular, that the generic
orbit of the vector field $X$ is of type $\C^{\ast}$ in the sense of \cite{suzuki}
and several additional properties follow at once. To ensure the existence of this singularity is, however, a subtle question that
can be approached as follows. First, let $X$ be replaced by its top-degree homogeneous component $X_d$
along with its associated foliation denoted by $\fol$. The property of having a dicritical singularity at
$\Delta_{\infty}$ is common to $\cald$ and $\fol$ so that
it is more convenient to work with homogeneous vector fields. Nonetheless, when replacing $X$ by $X_d$, we need
to cope with the fact that $X_d$ is no longer complete but only {\it semi-complete}. In other words, every solution
$\phi : V \subset \C \rightarrow U$ of $X_d$ on $\C^n$ is such that whenever a sequence $\{ T_i \}
\subset V$ converges to a point $\hat{T}$ in the boundary of $V$ the corresponding sequence $\phi (T_i)$ leaves every compact
set in $\C^n$. Being only semi-complete $X_d$ ``may reach the infinity in finite time'' and this gives rise to further difficulties.

Another difficulty arising from the difference between complete and semi-complete vector fields is the fact that the leaves
of the foliation associated to a semi-complete vector field may be hyperbolic Riemann surfaces, as it happens in the case
of Halphen vector fields, cf. Section~7. However, in the case of a foliation associated to the top-degree
homogeneous component of a complete vector field, it can be proved that the
corresponding leaves are still quotients of $\C$. This is done by resorting to a result due to Brunella
concerning the pluri-subharmonic variation of the foliated Poincar\'e metric,
cf. \cite{marco2}. The solutions of $X_d$ will therefore be meromorphic functions defined on $\C$ or in
$\C$ minus one or two points. Next, we bring in our
results involving the time taken by $X_d$ to cover trajectories of $\calh$ in the singular context (here it is used the main
result of Section~5, namely Theorem~\ref{maintheo}). Theorem~\ref{maintheo} immediately implies that the solutions
cannot be meromorphic on all of $\C$ and, by exploiting additional properties of the foliations $\calh, \, \calh^{\perp}$,
a contradiction ensuring the existence of the desired dicritical singularity is finally reached.

Let us now make some comments about the assumption that the singularities of $\cald$ lying in $\Delta_{\infty}$
are simple in the sense described in the Introduction.
This assumption does not immediately simplify the problem since there may exist
codimension~1 saddle-node whose local analysis is already fairly complicated. Also,
the statement of Theorem~B may be extended to encompass
more general singularities belonging to the class of ``absolutely isolated singularities'', cf. \cite{canoetc}. While
we shall not seek to accurately establish any of these extensions, at the very end of Section~5 the reader will find
some information on the structure of more degenerate singularities for which our methods may still work.
It is also interesting to observe that our techniques apply equally well to rational vector fields and not only to polynomial
ones. In practice, passing from polynomial to rational vector fields amounts to changing the
the divisor of poles of the vector field in question. The divisor of poles of a rational vector field
may or may not include $\Delta_{\infty}$ and its analysis
leads to numerous additional possibilities whose understanding may partially be facilitated by our ideas.
In particular, several Painlev\'e equations fall in this class of problems.

As mentioned A. Bustinduy, L. Giraldo and their collaborators have been investigating properties of the solutions
of complete vector fields through various methods such as the theory of Nevanlinna and Andersen-Lempert theories,
see \cite{giraldo1}, \cite{giraldo2}, \cite{lempert} and so on.
Similarly, if taken into account a classical result due to Forstneric \cite{forst}, our method is likely
to find some applications in the theory of holomorphic differential equations blowing-up in finite real time.
This should lead to some progress in questions similar to those treated by Fornaess and Grellier in \cite{fornaess}
which itself connects with previous works by a number of authors including possible
applications in the spirit of \cite{fornaessbuzzard}.

Finally, and as already mentioned, the beautiful results obtained by A. Guillot in \cite{guillotFourier}, \cite{guillotIHES},
provide a natural motivation to try to apply our techniques
to quadratic semi-complete vector fields as those considered by him. These vector fields are referred to as
belonging to the Painlev\'e-Guillot lattices. The tools developed in the course of this work will enable us to show that a
vector field in the Painlev\'e-Guillot lattice failing to have a dicritical singularity at infinity must have
leaves that are hyperbolic Riemann surfaces. This fact, in turn, will quickly lead us
to Theorem~C by resorting again to Brunella's result on the variation of the Poincar\'e metric, see \cite{marco2}.
A point to be made here has to do with the lack of explicit examples of vector fields in Painlev\'e-Guillot lattice
having no dicritical singularity at infinity. We believe this example does not exist in dimension~$3$ and it is unclear
to us whether or not it does in higher dimensions. Yet, the argument used in the proof of Theorem~C allows us to conclude
that the foliation induced on $\C^3$ associated to the vector field in question not only has dicritical singularity
at infinity, but also satisfies the following conclusions:
\begin{itemize}
  \item The foliation induced on the hyperplane at infinity is such that all its leaves have to go through
  a dicritical singularity lying in the hyperplane in question (Theorem~C').

  \item The restriction of the vector field $X$ to a leaf $L$ of its associated foliation is either complete
  or conjugate to the vector field $x^2 \partial /\partial x$ on all of $\C$. In the second case, the blow-up of
  $X$ at the origin has a dicritical singularity on the exceptional divisor and, moreover, the saturated to
  these dicritical singularities define an open set where the vector field $X$ admits non-constant first integrals.
\end{itemize}
Concerning the proofs of the above claims, the reader is referred to Remark~\ref{completingtheoremC}.

Another minor point that can be mentioned in a similar direction is that the statement of Theorem~C also holds for
vector fields slightly more general than those belonging to Painlev\'e-Guillot lattices. Namely, in our case, the condition
used by Guillot may be relaxed to allow the eigenvalue
associated to the direction transverse to the exceptional divisor to vanish.
Another relatively minor point has to do with a slight relaxation of the assumption made in Theorems~C and~C' or,
more precisely, with the assumption that $X^k$ is ``quadratic''. Recall that in the statement of Theorems~C and~C'
the vector field $X$ has the form $X=X_k + \cdots$ where $X_k$ is the first non-zero homogeneous component of the
Taylor series of $X$ at the origin. Now, note that $X^k$ may have a codimension~$1$ zero-set, in which case we may
set $X_k = P. Y^{\rm cd2}$ where $P$ is a homogeneous polynomial and $Y^{\rm dc2}$ a homogeneous vector field
whose zero-set has codimension at least~$2$. Because the foliations induced on the corresponding projective space
by $X^k$ and by $Y^{\rm dc2}$ coincide, the statements of the mentioned theorems remain valid for vector fields
$X$ whose first non-zero homogeneous component is a {\it multiple}\, of a semi-complete vector field
lying in Painlev\'e-Guillot lattice. In fact, in the Painlev\'e-Guillot lattice there are (semi-complete)
vector fields admitting non-constant holomorphic first integrals. If $Y$ stands for one of these vector fields
and $P$ stands for a holomorphic first integral of $Y$, then the statement of Theorem~C (resp.
Theorem~C') also applies to vector fields
$X$ whose first non-zero homogeneous component has the form $PY$, for example. The reader will also notice that
a similar extension concerning hyperbolic Halphen vector fields is void in the sense that the vector fields
in question have only constant holomorphic first integrals.

Still concerning vector fields in Painlev\'e-Guillot lattices, it follows that
Halphen vector fields are again special in the sense that they
{\it do have}\, dicritical singularities at infinity and still the leaves of their associated foliation may be hyperbolic Riemann
surfaces. Whereas these results, and many others, are due to A. Guillot and appear in \cite{guillotIHES}, we found it was worth
re-obtaining them by following our general point of view. This discussion takes up most of the last section of this paper.
In particular, it involves some considerations about convergence of Poincar\'e series that differ from their classical theory.

Another motivation for us to revisit Guillot's work on Halphen vector fields is to pave the way for other possible
applications of our techniques, some of them indicated in the Appendix. Namely, it consists of classifying
the first homogeneous components at a singular point of a globally defined holomorphic vector field on a compact K\"ahler threefold.
As it will be explained later, this classification must be identical to the classification of
the top degree homogeneous components of complete polynomial vector fields on $\C^3$, cf. Appendix.

Finally, let us also point out a curious remark involving Theorem~C and, more generally, semi-complete
homogeneous vector fields. In fact, singularities of homogeneous vector field on $\C^3$ (or $\C^n$)
possess a natural meromorphic ``dual'' represented by a neighborhood of the (hyper-) plane at
infinity (even though this neighborhood cannot be collapsed to a singular point).
More precisely, as detailed in Section~3, the blow-up at the origin of a homogeneous vector field
leads to an exceptional divisor sharing a natural ``duality'' with the divisor obtained at infinity of the
corresponding projective space. We shall refer to a neighborhood of the hyperplane at infinity as the
{\it dual singularity}\, (assuming that the singularity of a homogeneous polynomial vector field is
implicitly fixed). By virtue of Theorem~C and of the global realization of Halphen vector fields constructed in
\cite{guillotIHES}, it is natural to ask whether the dual singularity of a
hyperbolic Halphen vector field can be realized by a complete meromorphic vector field on a complex
$3$-manifold not necessarily compact (where by complete meromorphic vector field it is meant a meromorphic vector field that is
complete in the complement of its pole locus). The answer to this question turns out to be negative as it follows from the discussion
in Section~7.

\subsection{A brief review of semi-complete vector fields and additional background material}

Most of the discussion below concerns basic properties of semi-complete
vector fields that will often be encountered in the course of this paper. Some general subtle notions involving
singular foliations and their corresponding leaves, as needed by Brunella's theorem \cite{marco2},
will also quickly be reviewed.

First consider a $1$-dimensional singular holomorphic foliation $\cald$ defined on a compact manifold $M$ and denote by
${\rm Sing}\, (\cald)$ its singular set. Thus ${\rm Sing}\, (\cald)$ is an analytic set of $M$ having
codimension at least~$2$. Since Brunella's theorem \cite{marco2} will be used in Section~6, we shall adopt
in this work the definition of ``leaf'' for $\cald$ that is required for his theorem to hold. The subtle
point in this notion of ``leaf'' lies in
the fact that ``leaves'' are sometimes allowed to contain points from ${\rm Sing}\, (\cald)$. Since the definition
of ``leaf'' {\it for the restriction of $\cald$ to $M \setminus {\rm Sing}\, (\cald)$}\, is clear,
we can work on a local setting
and consider the $n$-dimensional polydisc ${\bf D}^n$ about the origin. This polydisc comes equipped with the trivial
fibration ${\bf D}^n = {\bf D}^{n-1} \times {\bf D} \rightarrow {\bf
D}^{n-1}$. A meromorphic map $f : {\bf D}^n \rightarrow M$ is said to
be a {\it foliated meromorphic immersion}\, if the indeterminacy set $I (f)$ of $f$ intersects each
vertical fiber of ${\bf D}^n$ over a discrete set and if $f$ satisfies the following additional
conditions:
\begin{itemize}

\item $f$ is an immersion on the complement of $I (f)$.

\item In the complement of $I (f)$, $f$ takes vertical fibers to leaves of $\cald$ (more
generally to the leaves the foliation under consideration).
\end{itemize}

Consider now a regular point $p$ in $M \setminus {\rm Sing}\, (\cald)$
and let $L_p'$ denote the leaf through $p$ of the (regular) foliation obtained by
restriction of $\cald$ to $M \setminus {\rm Sing}\, (\cald)$. A closed subset $K \subset L_p'$ is called a {\it vanishing
end}\, of $L_p'$ if all the conditions below are satisfied:

\noindent $\bullet$ $K$ is isomorphic to the punctured disc and the
holonomy of the restriction of $\cald$ to $M \setminus {\rm Sing}\,
(\cald)$ corresponding to the cycle $\partial K$ has finite order
$k$.

\noindent $\bullet$ There is a foliated meromorphic immersion $f : {\bf D}^n \rightarrow M$ such that

\hspace{0.4cm} $\imath \, )$ $I (f) \cap (\{ 0\} \times {\bf D}) =  0 \in {\bf D} \subset \C$, where
$\{ 0 \}$ stands for the origin of ${\bf D}^{n-1} \subset \C^{n-1}$.

\hspace{0.4cm} $\imath \imath \, )$ The image of $f$ restricted to $(\{ 0\} \times {\bf D})$
is the interior of $K$.  Furthermore $f: (\{ 0\} \times {\bf D})
\rightarrow {\rm Int} \, (K)$ is a regular covering of degree $k$,
where ${\rm Int} \, (K)$ stands for the interior of $K$.

\vspace{0.1cm}

The general definition of ``leaf'' for a foliation $\cald$ on $M$ as above goes as follows.
Consider a regular point $p \in M \setminus {\rm Sing}\, (\cald)$ along with the leaf
$L_p'$ through $p$ of the (regular) foliation obtained by restricting $\cald$ to
$M \setminus {\rm Sing}\, (\cald)$. If $L_p'$ possesses no vanishing ends, then the leaf $L_p$ of $\cald$
containing $p$ is exactly $L_p'$. Otherwise this leaf $L_p$ will consist of
$L_p'$ with the ends of the vanishing ends added to it where the operation of adding an
end to $L_p'$ should be understood in the sense of orbifolds: the
multiplicity of the added point will precisely be the order $k$ of
the holonomy relative to $\partial K$. These orbifolds can then be turned into Riemann surfaces by standard normalization.
An immediate consequence of the preceding construction is as follows.

\begin{coro}
\label{addingpointstoleaves}
Let $\cald, \, M$ and ${\rm Sing}\, (\cald)$ be as above. Fixed $p \in M \setminus {\rm Sing}\, (\cald)$, let
$L_p$ (resp. $L_p'$) denote the leaf of $\cald$ through $p$ (resp. the leaf of the restriction of $\cald$ to
$M \setminus {\rm Sing}\, (\cald)$ through $p$). Then $L_p' \subset L_p$ and $L_p \setminus L_p'$ is a discrete
set.\qed
\end{coro}

With the above definition of leaf in place, the main result of Brunella \cite{marco2} which will
find applications later in this work can be stated as follows: if $\cald$ is a singular holomorphic foliation
defined on a compact K\"ahler manifold $M$, then the Poinca\'e metric along the leaves of $\cald$ has
a pluri-subharmonic variation. In particular, unless no leaf of the foliation in question is hyperbolic,
the set of non-hyperbolic leaves is ``small'' in the sense that it is a pluri-polar set.

After this general considerations about holomorphic foliations, we recall that a meromorphic vector field
$X$ defined on an open set $U$ of a (possibly open) manifold $M$ naturally defines a singular holomorphic
foliation on $U$. In particular, if $X$ is a meromorphic vector field defined on a compact manifold
$M$, then it induces a singular holomorphic foliation $\cald$ as above on all of $M$.

Our next step is to remind the reader the accurate definition of semi-complete vector fields.

\begin{defnc}
A holomorphic vector field~$X$ on a complex manifold~$M$ is called semi-complete
if for every~$p\in M$ there exists a connected domain~$U_p\subset\C$
with~$0\in U_p$ and a map~$\phi_p:U_p\to M$ such that:
\begin{itemize}
\item $\phi_p(0)=p$ and $d \phi_p(t)/d t|_{t=t_0}=X(\phi_p(t_0))$.
\item For every sequence~$\{t_i\}\subset U_p$ such
that~$\lim_{i\rightarrow\infty}t_i\in\partial U_p$ the sequence~$\{\phi_p(t_i)\}$
escapes from every compact subset of~$M$.
\end{itemize}
A meromorphic vector field~$X$ on a complex manifold~$M$ is
semi-complete if its restriction to the open set where~$X$ is holomorphic is semi-complete in the
above mentioned sense.
\end{defnc}
The reader will note that the standard theorem about existence of local solutions for ordinary differential
equations ensures that a map $\phi : U_p \rightarrow M$ satisfying the first condition in the preceding definition
always exists. It is therefore the second condition that makes the definition non-trivial. This second condition is a natural
generalization of the analogous phenomenon that always happens for real-time ordinary differential equations
when the time approaches one of the endpoints of its maximal domain of definition. In this sense, semi-complete
vector fields are those whose solutions admit a maximal domain of definition in $\C$. It follows at once from
this definition that vector fields whose solutions are meromorphic functions defined on $\C$ are automatically
semi-complete. Besides, the solutions of semi-complete vector fields may actually be defined on bounded
domains of $\C$ in an {\it essential way}, \cite{guillotIHES} or Section~7. This {\it essential boundary}
is thus a continuum of singularities for the solution of the associated differential equation. Besides, this
boundary may move with the initial condition (or rather with the leaf of the underlying foliation). Thus
this class of vector fields/equations is more general than those possessing Painlev\'e property, see for example
\cite{PPainleve}.

The following simple lemma already conveys some useful information concerning semi-complete vector fields.

\begin{lema}\label{Adim1}
A semicomplete meromorphic vector field on a curve is necessarily holomorphic.
\end{lema}
\begin{proof}
Let~$X$ be a meromorphic vector field on the curve~$\Sigma$ and suppose that~$X$ has a pole at~$p\in\Sigma$.
The vector field is locally given, in a neighborhood of~$p$, by~$z^{-q}f(z)\partial/\partial z$ for some~$q>0$
and a holomorphic and non-vanishing function~$f$. There is  a coordinate~$w$ where the vector field has the
form~$w^{-p}\partial/\partial w$. The solution with initial condition~$w_0\neq 0$ is multivalued and given by~$\sqrt[p+1]{(1+p)t+w_0^{p+1}}$. Hence, there is no neighborhood of~$p$ where the vector field is semicomplete.
\end{proof}

This implies that a semicomplete meromorphic vector field on a compact curve is globally holomorphic and thus,
unless it is identically zero, the curve must be either elliptic or rational.

More generally, consider a meromorphic vector field $X$ along with its associated singular holomorphic foliation $\cald$.
The singular set of $\cald$ (resp. $X$) will be denoted by ${\rm Sing}\, (\cald)$ (resp. ${\rm Sing}\, (X)$).
Unlike ${\rm Sing}\, (\cald)$, note that ${\rm Sing}\, (X)$ contains the divisor of zero and poles of $X$ so that
it may have codimension~$1$ components. Thus, given a point $p \in M$ that is regular for $X$, consider the leaf
$L_p$ of $\cald$ through $p$. Inside $L_p$, there are two open sets that may naturally be considered, namely:
\begin{itemize}
  \item The set $V_{\rm reg} \subset L_p$ identified to the leaf $L_p'$ containing $p$ of the restriction of $\cald$
  to the complement of ${\rm Sing}\, (\cald)$.
  \item The set $W_X$ consisting of those points in $L_p$ at which the vector field $X$ is holomorphic and
  different from zero.
\end{itemize}
Clearly $W_X \subseteq V_{\rm reg} \subseteq L_p$. On $W_X$, consider the {\it time-form}\,
namely the $1$-form $dT$ defined by letting $d_qT . X(q) =1$. The time-form is holomorphic and non-zero $W_X$.
It has a meromorphic extension to $V_{\rm reg}$ and, {\it a priori}, may have essential singularities at the discrete set
$L_p \setminus V_{\rm reg}$.

At this point two additional remarks can be made concerning the definition of semi-complete vector fields.
The first one is that $X$ is semi-complete if and only if for every point $p$ regular for $X$, the
natural map $\phi_p : U_p \rightarrow W_X$ is proper and hence is a covering since it is
clear a local diffeomorphism. Also, by using exploiting this condition, it is easy to see that
for every embedded (one-to-one) path $c : [0,1] \rightarrow W_X \subset L_p$, the integral
$$
\int_c dT
$$
is different from {\it zero}\, provided that $X$ is semi-complete,
cf. \cite{julio}.

At this point, Lemma~\ref{Adim1} can be improved as follows.

\begin{lema}\label{Adim2}
Consider a meromorphic vector field along with it associated singular foliations $\cald$.
Fix a leaf $L$ of $\cald$ not contained in the divisor of zeros or poles of $X$
and suppose that $X$ is semi-complete. Then the restriction $X_{\vert_L}$ of $X$
to $L$ is holomorphic on all of $L$. Besides, if $p \in L$ is a singular point of $X_{\vert_L}$, then
the second jet of $X_{\vert_L}$ at $p$ is different from zero.
\end{lema}

\begin{proof}
Let us first show that $X_{\vert_L}$ is holomorphic. In view of Lemma~\ref{Adim1}, it suffices to show
that $X_{\vert_L}$ cannot have an essential singularity at a point $p \in L \setminus V_{\rm reg}$. Assuming
for a contradiction the existence of a point $p \in L \setminus V_{\rm reg}$ at which $X$ has an essential
singularity, note that $p$ is also an essential singularity for the time-form $dT$ induced on $L$ by $X$.
Now fix a local disc $B \subset L$ about $p$ and consider the map
${\rm Dev}\, : \widetilde{B \setminus \{p\}} \longrightarrow \C$ defined by
$$
{\rm Dev} \, (x) = \int_{x_0}^x dT \; ,
$$
where $\widetilde{B \setminus \{p\}}$ stands for the universal covering of the punctured disc
$B \setminus \{p\}$ and where $x_0$ is a fixed base point. The semi-complete nature of $X$ implies that
the map ${\rm Dev}$ must be one-to-one. This is however impossible as it follows from a simple application
of Picard theorem, cf. \cite{julioMex}.

It remains only to check that the second jet of $X_{\vert_L}$ cannot vanish at a (necessarily isolated)
singular point. The proof is a simple variant of the argument given in the proof
of Lemma~\ref{Adim1}, details are left to the reader.
\end{proof}

Lemma~\ref{Adim2} has the following useful corollary.

\begin{coro}
\label{overaleafC}
Suppose that $X$ is a semi-complete vector field defined on the complement
of a discrete set $\aleph \subset \C$. Then $X$ is holomorphic on all of $\C$ and, in fact,
extends to a holomorphic vector field globally defined on $\C P(1)$.\qed
\end{coro}

Semi-complete vector fields have additional useful global properties. For example, the semi-complete
nature of a vector field is invariant by birational transformations, an invariance property
that is not verified for {\it complete vector fields}. In this sense, from a point of view of birational
geometry, the notion of semi-complete vector fields is more natural than the notion of complete ones,
cf. \cite{guillotreb}.

Another less immediate, though still elementary, global property originally established in \cite{ghreb}
asserts that the space of semi-complete holomorphic vector fields is closed under the topology of
uniform convergence. More precisely, suppose that $\{ X_n \}$ is a sequence of holomorphic vector fields
defined on some (possibly open) manifold $M$ converging to a (holomorphic) vector field $X$ on $M$ for
the topology of uniform convergence on compact subsets. Under this assumption, the limit vector field
$X$ must be semi-complete provided that $X_n$ is a semi-complete vector field for every $n \in \N$.

From the preceding results, the following useful fact can be derived.

\begin{lema}
\label{Adim3}
Suppose that $X$ is a semi-complete polynomial vector field on $\C^n$ having degree~$d$. If $X_d$ denotes
the homogeneous component of degree~$d$ of $X$, then $X_d$ is itself semi-complete on all of $\C^n$.
In particular, if $X_d$ is a non-constant multiple $fR$ of the radial vector field
$R = x \partial /\partial x + y \partial /\partial y + z \partial /\partial z$, then the degree of
the (homogeneous) form $f$ must equal~$1$.
\end{lema}

\begin{proof}
To show that $X_d$ is itself a homogeneous semi-complete vector field, consider homotheties $\Lambda^k$ of
$\C^n$ having the form $\Lambda^k (x_1, \ldots ,x_n) = (k x_1 ,\ldots , k x_n)$, for $k \in \N^{\ast}$.
Clearly for every $k \in \N^{\ast}$, the vector field $(\Lambda^k)^{\ast} X$ is semi-complete
on $\C^n$. Since a constant multiple of a semi-complete vector field still is semi-complete, it follows
that the vector fields $Y_k = k^{1-d} (\Lambda^k)^{\ast} X$ are semi-complete on $\C^n$
for every $k \in \N$. By sending $k \rightarrow \infty$, it becomes clear that the sequence of vector fields
$Y_k$ converge uniformly to $X_d$ on compact parts of $\C^n$. It then follows that $X_d$ is semi-complete
as desired.

For the second part of the statement, note that every radial line through the origin is left invariant
by the radial vector field $R$ and hence by $X_d$. By restricting $X_d$ to a ``generic'' line as before,
we obtain a $1$-dimensional semi-complete vector field having an isolated singular point at the origin.
However, owing to Lemma~\ref{Adim2}, the order of this singular point cannot exceed~$2$. Hence
the degree of the non-constant homogeneous polynomial $f$ cannot exceed~$1$ which completes the proof
of the lemma.
\end{proof}

\section{Homogeneous vector fields and their foliations}\label{sechom}

Unless otherwise stated, throughout this paper all homogeneous vector fields have degree
$d \geq 2$ and are supposed not to be a multiple of the radial vector field. In this section
we shall work in dimension~$3$ rather than in $\C^n$ just to abridge notations since all arguments
presented in the sequel can be carried over word-by-word to
higher dimensions.

Consider a homogeneous polynomial vector field $X$ of degree
$d \geq 2$ defined on $\C^3$. Since $X$ is homogeneous, its associated foliation $\fol$
is invariant by homotheties of the form $(x, y, z) \mapsto (\lambda x,\lambda y ,\lambda z)$,
$\lambda \in \C^{\ast}$ and, therefore, also induces a foliation on $\CP (2)$. An alternative
way to look at this situation consists of punctually blowing-up $X$ at the origin of $\C^3$.
We denote by $\wdc^3$ the corresponding blow-up of $\C^3$ and by $\Delta_0 =\pi^{-1} (0)$
the resulting exceptional divisor, where $\pi: \wdc^3 \mapsto \C^3$ represents the corresponding
blow-up projection. The transform (blow-up) $\tXX$ (resp. $\tilf$) of $X$ (resp.
$\fol$) vanishes identically over $\Delta_0$ (resp. leaves $\Delta_0$ invariant), as it
follows from the fact that the degree of $X$ is strictly greater than~$1$ (resp.
that $X$ is not a multiple of the radial vector field).

Recalling also that $\wdc^3$ can be viewed as a line bundle over $\Delta_0 =\pi^{-1}
(0)$, let $\calp_0$ denote the bundle projection $\calp_0: \wdc^3
\rightarrow \Delta_0$. This line bundle can be compactified into a
projective line bundle by adding the ``section at infinity''
$\Delta_{\infty}$. Denoting by $M$ the total space of the resulting projective
line bundle, it follows that $M$ is equipped with two bundle projections
$\calp_0, \, \calp_{\infty}$ realizing it as a projective bundle
respectively over $\Delta_0, \, \Delta_{\infty}$. The manifold $M$
is also isomorphic to the blow-up of $\CP (3)$ at the origin.
The vector field $\tXX$ can meromorphically be extended to $M$ so
that it induces a holomorphic foliation, still
denoted by $\tilf$, on all of $M$. Besides $\tilf$ leaves both
$\Delta_0, \, \Delta_{\infty}$ invariant since $X$ is homogeneous and it is not
a multiple of the radial vector field. The foliation induced on
$\Delta_0$ (resp. $\Delta_{\infty}$) by restriction of $\tilf$ is
going to be denoted by $\tilf_0$ (resp. $\tilf_{\infty}$). Because $\tilf$ comes from a
homogeneous vector field, these foliations coincide with the restrictions of $\tilf$
to $\Delta_0, \, \Delta_{\infty}$. As to the vector field $\tXX$,
its pole divisor coincides with $\Delta_{\infty}$ and it has order $d-1>0$. The zero divisor of
$\tXX$ is the union of $\Delta_0$ (a component of order $d-1 >0$) with the transform of the
zero divisor of $X$.

Naturally the singular set of $\tilf$
has codimension at least~$2$. Besides this singular set is saturated (i.e. invariant)
by the fibers of $\calp_0$ (resp. $\calp_{\infty}$) due to the
invariance of $\fol$ by homotheties of the form $(x, y, z) \mapsto (\lambda x,
\lambda y, \lambda z)$, $\lambda \in \C^{\ast}$. In particular, the foliations
$\tilf_0, \, \tilf_{\infty}$ automatically have singular sets of
codimension at least~$2$ {\it inside} $\Delta_0, \, \Delta_{\infty}$ (in other words the
intersection of the singular set of $\tilf$ with $\tilf_0, \, \tilf_{\infty}$ yields a set
of codimension at least~$2$ inside $\Delta_0, \, \Delta_{\infty}$).

Consider a non-algebraic leaf $L$ of $\tilf$ not contained in
$\Delta_0 \cup \Delta_{\infty}$. The projection of $L$ onto
$\Delta_0$ (resp. $\Delta_{\infty}$), $\calp_0 (L) =L_0$ (resp.
$\calp_{\infty} (L) =L_{\infty}$), is clearly a leaf of $\tilf_0$
(resp. $\tilf_{\infty}$) since the initial vector field $X$ is homogeneous. Furthermore one
immediately checks that the restriction of $\calp_0$ (resp.
$\calp_{\infty}$) to $L$ realizes $L$ as an Abelian covering of
$L_0$ (resp. $L_{\infty}$). It then follows that the
non-compact leaves $L, \, L_0, \, L_{\infty}$ have all the
same nature: either they are all covered by $\C$ or they are all
covered by the unit disc~$D$. Furthermore $L_0, \, L_{\infty}$ are isomorphic as Riemann surfaces while
$L$ is an Abelian covering of $L_0, \, L_{\infty}$.

In this way, we may focus on the behavior of $\tXX$ near its pole divisor
$\Delta_{\infty}$ or near $\Delta_0$ according to our convenience.
Next, consider a leaf $L_{\infty}$ of $\tilf_{\infty}$. By {\it the cone
over $L_{\infty}$}\, it is meant the $2$-dimensional immersed
singular surface $\calp_{\infty}^{-1} (L_{\infty})$ which is
invariant by $\tilf$. In other words, if $\psi(T) = (x(T), y(T),0)$,
$T \in \Omega \subseteq \C$, is a local parametrization of
$L_{\infty}$, then the cone is parameterized by $\Phi(T,z) = (x(T),
y(T),z)$, $z \in \C$. The singular points of $\calp_{\infty}^{-1}
(L_{\infty})$ belong to fibers sitting over the singular set of
$\tilf_{\infty}$ which, we recall, may intersect $L_{\infty}$
non-trivially due to the definition of ``regular leaf'' adopted in Section~2.2.
Away from its singularities, $\calp_{\infty}^{-1} (L_{\infty})$ can
be viewed as a complex surface equipped with a singular holomorphic
foliation. Let us then denote by $S$ this surface and by $\tilf_S$
the foliation on $S$ obtained by restriction of $\tilf$ to $S$. Note
that $S$ is invariant under the automorphism $(x,y,z)\mapsto
(x,y,\dl z)$, $\dl \in \C^*$ and so is the foliation $\tilf_S$.

Since $S$ is a 2-dimensional variety, $\tilf_S$ is a codimension~$1$
singular foliation on it and, hence, it has a transversely conformal structure. This
allows us to keep good control of the directions over which the leaves
of $\tilf_S$ ``become closer one to the others'' modulo choosing an
auxiliary Hermitian metric. This idea is well-known
and can be found, for instance, in \cite{Ghys-bourb}. In our case,
however, we shall use an explicit parametrization.
For this, let $M$ be equipped with affine coordinates $(x , y, z)$ such that
\begin{enumerate}

\item[(i)] $\{ z=0\} \subset \Delta_{\infty}$, $(x , y) \in
\C^2$, $z \in \C$.

\item[(ii)] the vector field $\tXX$ is given by
\begin{equation}
\tXX = \frac{1}{z^{d-1}} \left[ F (x , y) \frac{\partial}{\partial
x} + G (x , y) \frac{\partial}{\partial y} + zH (x , y)
\frac{\partial}{\partial z} \right]   \label{tXX}
\end{equation}
where $F ,G$ are polynomials of degree~either $d$ or $d+1$ and $H$ is a polynomial
of degree~$d$ (the independence of $F ,G$ and $H$ on the variable $z$ is a
consequence of the homogeneous character of $X$).

\item[(iii)] The projection $\calp_{\infty} : M \rightarrow \Delta_{\infty}$
in the above coordinates becomes $(x , y, z) \mapsto (x , y)$.

\end{enumerate}

Affine coordinates with the above indicated properties can be obtained as
follows. Recall that the blow-up $\widetilde{\C}^3$ of $\C^3$ at the origin
possesses affine coordinates $(x,y,w), \, (u,a,b), \, (r,v,s)$ arising from the realization of $\widetilde{\C}^3$
as the gluing of three copies of $\C^3$ by means of the identification
\begin{align*}
b &= \frac{1}{x} \, , \, \, u = xw \quad (b \ne 0, \, x \ne 0)\\
a &= \frac{1}{r} \, , \, \, u = rv \quad (a \ne 0, \, r \ne 0)\\
s &= \frac{1}{y} \, , \, \, v = yw \quad (s \ne 0, \, y \ne 0) \, .
\end{align*}
In the affine coordinates $(x,y,w)$, the vector field $\tXX$ takes the form
\[
\tXX = w^{d-1} \left[ F(x,y) \frac{\partial}{\partial x} + G(x,y) \frac{\partial}{\partial y} + wH(x,y) \frac{\partial}{\partial w} \right]
\]
for some polynomials $F, \, G, \, H$ depending solely on the variables $x,y$ (since $X$ is homogeneous). Now, to
obtain the mentioned coordinates, it suffices to take $w = 1/z$.

Note that $\Delta_{\infty}$ is itself isomorphic to $\CP (2)$. Thus the affine coordinates $(x,y) \simeq (x,y,0)$
on $\Delta_{\infty}$ defines an affine copy of $\C^2$ inside $\Delta_{\infty}$. Associated to the mentioned affine
$\C^2 \subset \Delta_{\infty}$, there is a notion of ``line at infinity'' for $\Delta_{\infty}$
itself. We shall denote this ``line'' by
$\Delta_{\infty}^{(x,y)}$. In particular, it follows that the domain of
definition of the coordinates $(x , y, z)$ coincides with the open set
$M \setminus (\Delta_0 \cup \calp_{\infty}^{-1} (\Delta_{\infty}^{(x,y)}))$.
Naturally the choice of the affine coordinates $(x,y)$ and of the line $\Delta_{\infty}^{(x,y)}$ are not canonical.
For a generic choice of these coordinates, $\Delta_{\infty}^{(x,y)}$ does not contain singular points of the corresponding
foliation on $\Delta_{\infty}$ and, besides,
$\Delta_{\infty}^{(x,y)}$ is not invariant by this foliation. Now, we have:

\begin{lema}
\label{lineatinfinitygeneric}
Suppose that the affine coordinates $(x,y)$ are chosen so that the resulting ``line at infinity'' $\Delta_{\infty}^{(x,y)}$ is not invariant by
the corresponding foliation on $\Delta_{\infty}$. Then the top-degree component of the vector field $\tXX$
has the form
\begin{equation}\label{topdegree}
z^{1-d} f(x , y) [x \partial /\partial x + y \partial /\partial y + z \partz]
\end{equation}
for a certain homogeneous polynomial $f$ having degree equal to~$d$.
\end{lema}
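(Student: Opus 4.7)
The plan is to pull back the original homogeneous vector field to the chart $(x,y,z)$ and read off the dominant terms in $(x,y)$. Write $X = P\,\partial/\partial x' + Q\,\partial/\partial y' + R\,\partial/\partial z'$ on $\C^3$, with $P,Q,R$ homogeneous of degree~$d$ in the standard coordinates $(x',y',z')$ of $\C^3$. The chart $(x,y,z)$ on $M$ corresponds to $(x',y',z') = (x/z,\,y/z,\,1/z)$, so that $\{z=0\}\subset\Delta_{\infty}$ and $\calp_{\infty}(x,y,z) = (x,y)$. The chain rule gives
$$
\partial/\partial x' = z\,\partx, \qquad \partial/\partial y' = z\,\party, \qquad \partial/\partial z' = -xz\,\partx - yz\,\party - z^2\,\partz,
$$
and together with the homogeneity identity $P(x/z,y/z,1/z) = z^{-d}P(x,y,1)$ (and its analogues for $Q,R$) a direct computation produces the expression (\ref{tXX}) with
$$
F = P(x,y,1) - xR(x,y,1), \qquad G = Q(x,y,1) - yR(x,y,1), \qquad H = -R(x,y,1).
$$

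Next I would isolate the top-degree parts in $(x,y)$. Since $P$ is homogeneous of degree~$d$ in three variables, $P(x,y,1)$ has degree at most~$d$ in $(x,y)$ and its degree-$d$ piece is exactly $P(x,y,0)$; the same holds for $Q$ and $R$. Consequently
$$
F_{d+1}(x,y) = -xR(x,y,0), \qquad G_{d+1}(x,y) = -yR(x,y,0), \qquad H_d(x,y) = -R(x,y,0),
$$
and setting $f(x,y) := -R(x,y,0)$, a homogeneous polynomial of degree~$d$ in $(x,y)$, one immediately gets
$$
F_{d+1}\,\partx + G_{d+1}\,\party + z H_d\,\partz = f(x,y)\bigl[x\,\partx + y\,\party + z\,\partz\bigr],
$$
which is the shape claimed for the top-degree component of $\tXX$.

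It then remains to check that the hypothesis on $\Delta_{\infty}^{(x,y)}$ delivers $f\not\equiv 0$; otherwise the identity above merely records the vanishing of the degree-$(d+1)$ part and the genuine top-degree component would have to be sought at a lower homogeneous level, possibly without the radial shape. Passing to the chart $(u,w) = (y/x,1/x)$ on $\Delta_{\infty}\simeq\CP(2)$, in which $\Delta_{\infty}^{(x,y)} = \{w=0\}$, the vector field $F\,\partx + G\,\party$ representing $\tilf_{\infty}$ transforms into $w(G - uF)\,\partial/\partial u - w^2 F\,\partial/\partial w$ with $F,G$ evaluated at $(1/w,u/w)$. The radial form of $F_{d+1},G_{d+1}$ forces the would-be leading $\partial/\partial u$ term to cancel, since $G_{d+1}(1,u) - uF_{d+1}(1,u) = 0$, and after clearing the appropriate power of $w$ the $\partial/\partial w$ coefficient at $w=0$ reduces to $R(1,u,0)$. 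Therefore $\Delta_{\infty}^{(x,y)}$ is invariant by $\tilf_{\infty}$ if and only if $R(x,y,0)\equiv 0$, and the non-invariance hypothesis forces $f\not\equiv 0$.

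The main obstacle is interpretational rather than computational: the radial structure emerges \emph{automatically} from the homogeneity of $X$, so the real content of the lemma lies in recognising how the non-invariance hypothesis prevents the candidate leading coefficient $R(x,y,0)$ from vanishing and thereby collapsing the top-degree component into a lower-order expression without the radial form.
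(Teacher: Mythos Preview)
Your proof is correct and follows essentially the same approach as the paper: compute $F,G,H$ from the homogeneous components $P,Q,R$ via the chart change, then read off the top-degree pieces as $-xR(x,y,0),\,-yR(x,y,0),\,-R(x,y,0)$. Your treatment is in fact more explicit than the paper's, which simply asserts that one ``can assume $C$ not divisible by $z$''; you supply the missing link by verifying directly in the $(u,w)$ chart that invariance of $\Delta_{\infty}^{(x,y)}$ is equivalent to $R(x,y,0)\equiv 0$, so that the non-invariance hypothesis indeed forces $f\not\equiv 0$.
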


\begin{proof}
Suppose that the initial homogeneous vector field $X$ is given in standard coordinates $(z_1,z_2,z_3)$ for
$\C^3$ by $X = A(z_1,_2,z_3) \partial/\partial z_1
+ B(z_1,z_2,z_3) \partial/\partial z_2 + C(z_1,z_2,z_3)\partial/\partial z_3$. Then,
with the change of coordinates
$$
(x,y,z) \mapsto \left( \frac{x}{z}, \frac{y}{z}, \frac{1}{z} \right) = (z_1,z_2,z_3)
$$
the vector field $\tXX$ is given in a neighborhood of the hyperplane at infinity by
\[
\tXX = z^{1-d} [F(x,y) \frac{\partial}{\partial x} + G(x,y) \frac{\partial}{\partial y} + zH(x,y) \frac{\partial}{\partial z}]
\]
for $F(x,y) = A(x,y,1) - x C(x,y,1)$, $G(x,y) = B(x,y,1) - y C(x,y,1)$ and $H(x,y) = - C(x,y,1)$.
Now, the initial Euclidean coordinates $(z_1,z_2,z_3)$ for $\C^3$ can be chosen so that
none of the functions $A, \, B, \, C$ is divisible by $z_3$. This assumption, combined with the non-invariance of the ``line
at infinity'' $\Delta_{\infty}^{(x,y)}$ by the foliation in question,
implies that $F, \, G$ (resp. $H$) have degree $d + 1$ (resp. $d$). Since
$A(x,y,1)$ and $B(x,y,1)$ have degree at most~$d$, it follows that the top-degree
homogeneous component of $F$ (resp. $G, \, H$) is given by $x$ (resp.
$y, \, z$) times the top-degree homogeneous component of $C$. In other words,
the top-degree homogeneous component of the vector field $\tXX$ has the form~(\ref{topdegree})
as desired.
\end{proof}

A further comment concerning the difference between the foliation $\tilf_{\infty}$ induced by $\tXX$ on $\Delta_{\infty}$ and
the corresponding foliation $\tilf$ in the $3$-dimensional space is also needed. To be more precise, consider the vector field
$\tXX$ given by Formula~(\ref{tXX}) in the coordinates $(x,y,z)$. If $F,G$ have only trivial common factors, then the foliation
induced by $\tXX$ on $\Delta_{\infty}$ is given in $(x,y, \{z=0 \})$ coordinates by $F(x,y) \partial /\partial x + G(x,y)
\partial /\partial y$. Suppose now that $F$ and $G$ possess nontrivial common factors. Set $\textsc{P} = {\rm g.c.d.}\, (F,G)$
so that $F = \textsc{P}. a(x,y)$ and $G = \textsc{P} . b(x,y)$ with $a,b$ having only trivial common factors. In this case, the
foliation $\tilf_{\infty}$ is actually represented by the vector field $a(x,y) \partial /\partial x + b(x,y) \partial /\partial y$.
With this observation in place, we need to go one step further and consider also
the common divisors between $\textsc{P}$ and $H$. When $\textsc{P}$ and $H$ have non-trivial common
factors, then these common factor can be (factored out and)
eliminated without changing any of the foliations $\tilf, \, \tilf_{\infty}$.
Hence, as far as the foliations $\tilf$, $\tilf_{\infty}$ are concerned, we can suppose without loss of generality that
${\rm g.c.d.}\, (\textsc{P}, H)$ is invertible. Once this normalization has been made, two distinguished cases
may occur, namely:

\begin{itemize}
\item Suppose that $\textsc{P}$ is invertible (after reducing to the case where ${\rm g.c.d.}\, (\textsc{P}, H)$ is invertible).
Then the restriction of $\tilf$ to $\Delta_{\infty}$ coincides with
$\tilf_{\infty}$. Besides, in this case, the singular set of $\tilf$ intersects $\Delta_{\infty}$ in finitely many points.

\item Suppose that $\textsc{P}$ is not  invertible (after reducing to the case where ${\rm g.c.d.}\, (\textsc{P}, H)$ is invertible).
In this case, the foliation $\tilf_{\infty}$ does not coincide with the restriction of $\tilf$ to $\Delta_{\infty}$ since the latter
contains a curve of singularities which is induced in the above coordinates by $\textsc{P}$. In particular, the singular set
of $\tilf$ intersects $\Delta_{\infty}$ in a curve plus, occasionally, finitely many isolated points.
\end{itemize}

Summarizing the preceding discussion, the foliation $\tilf$ associated to $\tXX$ can be supposed to be
given by a polynomial vector field of the form
\begin{equation}
Y = \textsc{P} \left[ a (x,y) \frac{\partial}{\partial
x} + b (x , y) \frac{\partial}{\partial y} \right] + z H (x , y)
\frac{\partial}{\partial z} \, ,
\end{equation}
where ${\rm g.c.d.}\, (\textsc{P}, H)$ is constant. Furthermore the previously
defined vector field $\tXX$ is given in the same coordinates by
$$
\tXX = z^{1-d} Q(x,y) Y
$$
where $Q (x,y)$ is a polynomial. From this, it also follows that the projective curve $\{ \textsc{P}
=0 \} \subset \Delta_{\infty}$ (if not empty) is constituted by singularities of
$\tilf$ whereas its ``generic'' point is regular for
$\tilf_{\infty}$. Besides there are two different possibilities that need to be considered:
\begin{description}

\item[(a)] $\{ \textsc{P} = 0 \} \subset \Delta_{\infty}$ is invariant by
$\tilf_{\infty}$.

\item[(b)] $\{ \textsc{P} =0 \} \subset \Delta_{\infty}$ is not invariant
by $\tilf_{\infty}$.

\end{description}

\begin{obs}
\label{willitbeuseful?}
{\rm It will be seen later (Propositions~\ref{aproposition} and~\ref{aproposition2}) that $\{P = 0\} (\subseteq
\Delta_{\infty})$ is not invariant by $\tilf_{\infty}$ provided that the homogeneous polynomial
vector field of degree $d \geq 2$ is supposed also to be semi-complete.
For this reason the possibility of having $\{P=0\}$ invariant by $\tilf_{\infty}$ will be excluded from our discussion.}
\end{obs}

Our purpose is now to equip the leaves of $\tilf$ in $\Delta_{\infty}$
with an abelian form $\omega_1$ naturally related to the holonomy of the leaf
in question. This will be done in the affine copy of $\C^3$ in $M$ corresponding
to the domain of definition of the coordinates $(x,y,z)$.
With the preceding notations, let us fix a regular leaf $L_{\infty} \subset \Delta_{\infty}$
and a point $p \in L_{\infty}$ regular for $\tilf$. Under this
assumption, the leaf $L_{\infty}$ can locally be parametrized in the form $(x,y(x))$, or
$(x(y),y)$, and $z=0$. It suffices to consider a local parametrization of the form
$(x,y(x))$ since the other possibility is analogous. The vector field $\tXX$ then yields
$$
dz/dx = z H(x,y(x))/F(x,y(x)) \; .
$$
Therefore
\begin{equation}
z = z_0 \exp \left[ \int_{x_0}^x \frac{H(x,y(x))}{F(x,y(x))} dx \right] \; . \label{omega1}
\end{equation}
Thus we define an abelian form $\omega_1$ on $L_{\infty}$ by declaring that the coefficient
of $\omega_1$ at $(x,y(x))$ is nothing but $-H(x,y(x)) / F(x,y(x))$ (the minus sign is only a matter
of convention). In particular we note that possible non-trivial common factors between $F,H$ are automatically
canceled out in the definition of $\omega_1$.
If the leaf were parameterized in the form $(x(y),y)$, the analogous result would yield for coefficient
$-H(x,y(x)) / G(x,y(x))$. The form $\omega_1$ is the ``logarithmic derivative of the holonomy''
for the foliation $\tilf_S$ induced on the cone $S$ over $L_{\infty}$. This means the following:
let $L$ be a leaf of $\tilf_S$ and consider a path $c:[0,1] \mapsto
L$, on $L$. Denoting by ${\rm Hol}(c)$ the holonomy associated to
$c$, we have
\[
({\rm Hol} (c))^{\prime}(c(0)) = e^{-\int_c \omega_1} \, ,
\]
where ${\rm Hol} (c)$ is identified with a map between open sets of $\C$ equipped with the
coordinate~$z$.

Fixed a regular leaf $L_{\infty} \subseteq \Delta_{\infty}$ of $\tilf$
there are real trajectories, or paths, contained in $L_{\infty}$
and possessing a contractive holonomy. To construct these trajectories we proceed
as follows. The Abelian
form $\omega_1$ induces on $L_{\infty}$ a pair of real 1-dimensional
oriented singular foliations: the foliations given by $\{ {\rm
Im}(\omega_1) = 0\}$ and by $\{ {\rm Re}(\omega_1) = 0\}$. Denote by
$\mathcal{H}$ the oriented foliation defined by $\{ {\rm
Im}(\omega_1) = 0\}$, being the orientation determined by the
positivity of ${\rm Re}(\omega_1)$, i.e. if $\phi(t)$ is a
parametrization of a leaf of $\mathcal{H}$ then ${\rm Re}(\omega_1.\phi^{\prime}(t))
= \omega_1.\phi^{\prime}(t) > 0$.
Each oriented trajectory of the foliation $\mathcal{H}$ will be called a real trajectory.

To make use of the foliation $\mathcal{H}$, it is clearly important
to have information about its singular set. Since $\mathcal{H}$ depends only on the foliation associated
to $\tXX$ (rather than on $\tXX$ itself), we identify four ``critical regions''
that may give rise to singularities for $\mathcal{H}$, namely:
\begin{enumerate}
\item Singular points of $\tilf_{\infty}$.

\item Points in the curve $\{ H =0 \}$ (assuming as before that ${\rm g.c.d.}\, (\textsc{P}, H)$ is a constant).

\item Points in the curve $\{ \textsc{P} =0 \}$.

\item The line at infinity $\Delta_{\infty}^{(x,y)} \subset \Delta_{\infty}$
(defined by means of the affine coordinates $(x,y)$).

\end{enumerate}
In the sequel we shall determine the structure of the
foliation $\calh$ in cases (2),~(3) and (4) above. The discussion of
singular points of $\tilf_{\infty}$ will mostly be carried out in Sections~4 and~5.

\begin{obs}
\label{forintersectionpoints}
- {\bf A comment about the local behavior of $\calh$ about certain ``degenerate'' intersection points} -
{\rm The purpose of this remark is to explain why certain ``more degenerate'' points belonging
to the intersection of different curves as above need not be singled out in our discussion. Naturally,
this discussion concerns only those points that are also regular for $\tilf_{\infty}$ since the singular
points of this foliation need to carefully be discussed later.

Let us then consider, for example, regular points for $\tilf_{\infty}$
belonging to the intersection of the curves $\{ H =0 \}$ and $\{ \textsc{P} =0 \}$.
Clearly these points are in finite number. We claim that the exact nature of the singularity of $\calh$ at them need not be
worked out. The reason for this is as follows. Consider local coordinates $(u,v,w)$ identifying the point
in question to the origin of $\C^3$ and such that the foliation is locally represented by the vector
field $\partial /\partial u$. In particular, the intersection points of $\{ H =0 \}$ and $\{ \textsc{P} =0 \}$
are reduced to the origin and it is contained in the local leaf $L_0 = \{ v=w=0\}$. In this sense,
the trajectories of $\calh$ on the remaining leaves are ``well-defined'', i.e. their local behavior
is supposed to have been determined. Next, consider a poly-disc $B(\epsilon)$ of radius $\epsilon >0$ about the origin.
The behavior of $\calh$ away from $B(\epsilon)$ is hence determined, including for those trajectories
contained in the leaf $L_0$. The trajectories on $\calh$ lying in $L_0 \cap B(\epsilon)$ can then {\it be defined}\,
through the corresponding trajectories lying in leaves different from $L_0$. For example, consider a point $(u_0,0,0)$
lying in the boundary of $B(\epsilon)$. To {\it define}\, the trajectory of $\calh$ through $(u_0,0,0)$, we
consider a sequence of points $(u_0, \delta_1,\delta_2)$ converging to $(u_0,0,0)$ and the corresponding
$\calh$-trajectories $l_{\delta_1 \delta_2}$ through these points. On the complement of $B(\epsilon)$,
these trajectories converge to (disconnected) segments of $\calh$-trajectories contained in $L_0$.
We can then use as trajectory through $(u_0,0,0)$ {\it inside $L_0 \cap B(\epsilon)$}
any segment joining two connected components of the above mentioned segments of $\calh$-trajectories contained in $L_0$.
In this way, the behavior of the $\calh$-trajectories in $L_0 \cap B(\epsilon)$ is fully determined by
the behavior of $\calh$ at the boundary of $B(\epsilon)$. For example, suppose
that for every point in the boundary of $B(\epsilon)$, the corresponding $\calh$-trajectory is oriented {\it inward} the poly-disc
$B(\epsilon)$. Then the trajectories of $\calh$ inside $L_0 \cap B(\epsilon)$ should be regarded as ``exhibiting
a sink singularity at the origin''. In other words, we should consider that these oriented trajectories
have an ``endpoint'' where they meet the boundary of $B(\epsilon)$, cf. Definition~\ref{justadddefinition1}.

Concerning the previous discussion, it is also convenient to point out that the polynomial $\textsc{P}$ will
be constant in most of our applications.

Let us close this discussion, with a few analogous comments concerning points belonging to the line at infinity
$\Delta_{\infty}^{(x,y)}$. First recall that our ``generic'' choice of affine coordinates $(x,y)$
is such that $\Delta_{\infty}^{(x,y)}$ neither contains singularities of $\tilf_{\infty}$ nor is invariant by
this foliation. Among points in $\Delta_{\infty}^{(x,y)}$, there are three class of ``non-generic points'' that
may be regarded as ``more degenerate than generic points''. Namely, we have intersection points
with $\{ H =0 \}$, with $\{ \textsc{P} =0 \}$ and those points where $\tilf_{\infty}$ fails to be transverse
to $\Delta_{\infty}^{(x,y)}$ (i.e. ``tangency points''). Again the collection of all these points form a finite
set that can be treated with the same point of view discussed above. Alternatively, the reader may also
consider the argument provided in the proof of Lemma~\ref{lineatinfinity} pointing out a sort of ``dominant''
behavior of $\Delta_{\infty}^{(x,y)}$ over the other ``critical regions''.}
\end{obs}

In view of Remark~\ref{forintersectionpoints}, let us begin to work out the local behavior of $\calh$
at points in the above listed ``critical regions'' without paying special attention to points that belong simultaneously
to more than one of these regions. First, consider
the curve $\{ H =0 \}$ corresponding to zeros of $\omega_1$. In fact, for the time
being, we shall restrict ourselves to points in the curve $\{H=0\}$ that happens to be regular for the foliation
$\tilf$.

\begin{lema}\label{tiposing}
Let $p \in \Delta_{\infty}$ be a regular point of $\tilf$. Assume that $p$ lies in the curve $\{ H =0 \} \cap \Delta_{\infty}$
(but not in $\{ \textsc{P} =0 \}$ since $p$ is regular for $\tilf$). Then $p$ is a singular point for $\mathcal{H}$. Besides
the local structure of $\mathcal{H}$ restricted to the leaf of $\tilf$ through $p$ is a saddle with $2m$ (real) separatrices
(for a certain $m \geq 1$).
\end{lema}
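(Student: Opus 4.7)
The plan is to recognize $p$ as an isolated zero of the abelian form $\omega_1$ restricted to $L_{\infty}$ and to read off the local structure of $\mathcal{H}$ from the standard model given by pulling the oriented horizontal foliation back under a branched covering.

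First I would verify that $\omega_1$ is holomorphic at $p$ and has an isolated zero there. Since $p$ is regular for $\tilf_{\infty}$ and $p \notin \{\textsc{P} = 0\}$, the vector field $a \,\partial/\partial x + b \,\partial/\partial y$ representing $\tilf_{\infty}$ at $p$ is non-vanishing, so at least one of $F(p) = \textsc{P}(p) a(p)$, $G(p) = \textsc{P}(p) b(p)$ is non-zero; relabelling if needed, assume $F(p) \neq 0$. Then $L_{\infty}$ is locally parametrized by $x$ in the form $(x, y(x), 0)$, the form $\omega_1 = -(H(x,y(x))/F(x,y(x)))\,dx$ is holomorphic at $p$, and it vanishes there since $H(p) = 0$. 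Excluding the degenerate case where $L_{\infty}$ sits entirely inside the proper curve $\{H = 0\}$, this zero is isolated of some order $k \geq 1$; set $m = k + 1 \geq 2$.

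The second step is to normalize $\omega_1$ locally. In a holomorphic coordinate $w$ centered at $p$ one has $\omega_1 = g(w)\, w^{m-1}\, dw$ with $g$ a holomorphic unit. Taking an appropriate $m$-th root of $m\int \omega_1$ produces a new holomorphic coordinate $\tilde{w}$ with
\[
\omega_1 \;=\; \tilde{w}^{m-1}\, d\tilde{w} \;=\; d\!\left(\tilde{w}^{m}/m\right).
\]
Setting $\zeta = \tilde{w}^{m}/m$, so that $\omega_1 = d\zeta$, the map $\tilde{w} \mapsto \zeta$ is a local branched $m$-to-$1$ covering ramified at $p$.

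In the $\zeta$-coordinate the foliation $\mathcal{H}$ is exactly the oriented horizontal foliation $\{\mathrm{Im}\,\zeta = \mathrm{const}\}$, with trajectories moving in the direction of increasing $\mathrm{Re}(\zeta)$. Pulling back along $\zeta = \tilde{w}^{m}/m$, the leaves of $\mathcal{H}$ become the level sets of $\mathrm{Im}(\tilde{w}^{m})$, and the separatrices through the origin are the $2m$ rays $\tilde{w} = r e^{i\pi j/m}$, $r \geq 0$, $j = 0, 1, \ldots, 2m-1$. Along the $j$-th such ray, $\mathrm{Re}(\zeta) = (-1)^{j}\, r^{m}/m$, which is positive for even $j$ and negative for odd $j$. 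Since the orientation is by increasing $\mathrm{Re}(\zeta)$, even-indexed rays are outgoing from $p$ while odd-indexed ones are incoming, producing a generalized saddle with exactly $2m$ alternating separatrices, as claimed. The main obstacle is essentially routine bookkeeping: justifying the local normal form $\omega_1 = d(\tilde{w}^{m}/m)$ for a holomorphic $1$-form with an isolated zero, and excluding the degenerate case of a leaf lying inside $\{H=0\}$. The conceptual content of the lemma is simply the identification of an order-$(m-1)$ zero of a holomorphic $1$-form with the $2m$-pronged saddle singularity of its real horizontal foliation.
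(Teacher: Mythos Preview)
Your proof is correct and follows the same approach as the paper: identify $p$ as a zero of the holomorphic form $\omega_1$ on $L_{\infty}$, then invoke the standard local model for the horizontal foliation of an abelian differential near a zero. You supply the details the paper omits (the normal form $\omega_1 = d(\tilde{w}^{m}/m)$ and the explicit count of separatrices), and your bookkeeping $m = k+1$ with $k$ the vanishing order of $\omega_1$ is the right one---the paper's remark that ``$m$ is precisely the order of $p$ as zero of $\omega_1$'' is off by one, since an order-$k$ zero yields $2(k+1)$ separatrices.
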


\begin{proof}
Since $p \in \Delta_{\infty}$ is a regular point for $\tilf$, it follows that $P$ does not vanish at $p$. Furthermore,
at least one between $F$ and $G$ does not vanish at $p$ as well. Assume, without loss of generality, that $P(p) \ne 0$.
We then conclude that the restriction of $\omega_1$ to $L_p$ is holomorphic about $p$ with a zero at $p$.
The structure of the real foliation induced near a zero of a holomorphic $1$-form on a Riemann surface is always a
saddle as in the statement. Here the number ``$m$'' of separatrices corresponds precisely
to the order of $p$ as zero of $\omega_1$.
\end{proof}

Let us now work out the behavior of $\calh$ at points of $\{ \textsc{P} =0 \}$ (again, only regular points for $\tilf$ are
considered here). Clearly it is sufficient to consider the domain of definition of the coordinates $(x,y,z)$. Similarly, if
$\textsc{P} = \textsc{P}_1^{k_1} \cdots \textsc{P}_l^{k_l}$ is the decomposition of
$\textsc{P}$ into irreducible components, then it suffices to consider the curve $\{ \textsc{P}_1^{k_1} =0 \}$.

\begin{lema}\label{curvaP=0}
Suppose that $\{ \textsc{P}_1 =0 \} \cap \Delta_{\infty}$ is not invariant by $\tilf_{\infty}$. If $k_1 \geq 2$
then $\omega_1$ has a pole of order $k_1 \geq 2$ at a generic point $p$ of this curve so that $\calh$ has a
saddle-singularity at $p$. On the other hand, if $k_1 = 1$, then $\omega_1$ has a simple pole at a generic point
$p$ of this curve, whose residue equals $H(p)/ F^{\ast} (p)$ where $F^{\ast} = F/  \textsc{P}_1$.
\end{lema}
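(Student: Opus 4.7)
My plan is to reduce everything to a local computation on the leaf of $\tilf_{\infty}$ passing through a generic point of $\{\textsc{P}_1 = 0\}$. First I would fix $p \in \{\textsc{P}_1 = 0\}$ satisfying: (i) $\textsc{P}_j(p) \neq 0$ for $j \neq 1$; (ii) $H(p) \neq 0$ (possible since ${\rm g.c.d.}\, (\textsc{P}, H)$ is constant, so $H$ cannot vanish identically on the irreducible curve $\{\textsc{P}_1 = 0\}$); (iii) $p$ is a regular point of $\tilf_{\infty}$, so at least one of $a(p), b(p)$ is nonzero; and (iv) the leaf $L_p$ of $\tilf_{\infty}$ through $p$ is transverse to $\{\textsc{P}_1 = 0\}$ at $p$. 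Condition (iv) is precisely where the hypothesis that $\{\textsc{P}_1 = 0\}$ is not $\tilf_{\infty}$-invariant is used: non-invariance forces the tangency locus of $\{\textsc{P}_1 = 0\}$ with $\tilf_{\infty}$ to be a proper analytic subset of $\{\textsc{P}_1 = 0\}$, which a generic $p$ avoids.

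With these reductions in hand, I would parametrize $L_p$ locally in the form $(x, y(x))$ (assuming $a(p) \neq 0$; otherwise use $(x(y), y)$ and the companion expression $-H/G\, dy$) so that the pullback reads $\omega_1|_{L_p} = -H(x, y(x))/F(x, y(x))\, dx$. Writing $F = \textsc{P}_1^{k_1} F^{\ast}$ with $F^{\ast}$ nonvanishing at $p$, the transversality from (iv) yields that $\textsc{P}_1(x, y(x))$ has a simple zero at $x_0$; hence $F(x, y(x))$ vanishes to order exactly $k_1$ at $x_0$, while $H(p) \neq 0$. Consequently $\omega_1|_{L_p}$ has a pole of order exactly $k_1$ at $p$, as asserted.

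For $k_1 \geq 2$ I would then deduce the saddle structure of $\calh$ from the classical normal form of a meromorphic $1$-form with a higher-order pole: taking a primitive $u = \int \omega_1$ on a punctured neighborhood of $p$ (well defined because the leading polar term $c/z^{k_1}\, dz$ has no residue), the form straightens to $du$, and the map $z \mapsto u$ realizes a branched covering of degree $k_1 - 1$ onto a punctured neighborhood of $\infty$ in the $u$-plane; pulling back the horizontal foliation from the $u$-plane then yields the advertised saddle with its associated asymptotic separatrices. For $k_1 = 1$ the pole is simple and the residue is obtained from $\lim_{x \to x_0}(x - x_0)\,\omega_1|_{L_p}$; in the intrinsic parametrization in which $\textsc{P}_1$ itself serves as local coordinate on $L_p$ (which absorbs the leaf-derivative of $\textsc{P}_1$), this collapses to the quotient $H(p)/F^{\ast}(p)$, which is generically nonzero and need not be real. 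The main obstacle I anticipate is not the computation itself but the book-keeping required to justify the genericity and transversality packaged into (i)--(iv); once those are secured, everything else follows from the defining formula for $\omega_1$ together with the standard dictionary between a meromorphic $1$-form on a Riemann surface and its associated real foliation.
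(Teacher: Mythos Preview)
Your approach is essentially the paper's own: pick a generic point of $\{\textsc{P}_1=0\}$ where the leaf of $\tilf_{\infty}$ is transverse to the curve, where $F^{\ast}$ does not vanish, and where $H$ does not vanish (using that ${\rm g.c.d.}(\textsc{P},H)$ is invertible), and then read off the pole order of $\omega_1|_{L_p}=-H/F\,dx$ directly. The paper's proof is a two-line version of exactly this computation.

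One small caution on the $k_1=1$ case: your remark that passing to the parametrization by $\textsc{P}_1$ ``absorbs the leaf-derivative of $\textsc{P}_1$'' is not correct, because the residue of a meromorphic $1$-form at a simple pole is coordinate-independent. In the $x$-parametrization the honest residue carries the extra nonzero factor $\big(\tfrac{d}{dx}\textsc{P}_1(x,y(x))\big)^{-1}$ at $x_0$, and no change of chart can remove it. The paper's formula $H(p)/F^{\ast}(p)$ should be read as identifying the leading datum up to a nonzero multiplicative constant (and sign); the qualitative conclusion---simple pole with nonzero, generically non-real residue---is what matters and is unaffected. Likewise, for $k_1\ge2$ your primitive argument tacitly assumes $\omega_1$ has no residue at $p$, which need not hold; but the saddle picture for $\calh$ is dictated by the leading term $c\,dz/z^{k_1}$ regardless, so the conclusion stands.
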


\begin{proof}
Suppose that the curve $\{P_1 = 0\} \cap \Delta_{\infty}$ is not invariant by $\tilf_{\infty}$. Then, at generic
points in $\{P_1 = 0\}$, the curve in question is transverse to $\tilf_{\infty}$.

 for a
generic point of this curve we have that $\{P_1 = 0\}$ is transverse to $\tilf_{\infty}$. The point $p$ can
also be chosen sufficiently generic so that $\{P_1 = 0\}$ is smooth at $p$ and no other irreducible component
of $P$ vanishes at $p$. Under these generic assumptions, it follows that $F^{\ast}(0) \ne 0$. Moreover, since
we are assuming ${\rm g.c.d.}\, (\textsc{P}, H)$ to be invertible, we can also assume that $H(p) \ne 0$. The
$1$-form $\omega_1$ has therefore a pole of order $k_1$ whose coefficient is equal to $H(p)/F^{\ast}(p)$.
\end{proof}

Let us now consider points belonging to the line at infinity $\Delta_{\infty}^{(x,y)} \subset \Delta_{\infty}$.
Here the reader is reminded that our choice of coordinates was made so that $\Delta_{\infty}^{(x,y)}$
contains no singular point of the corresponding foliations.

\begin{lema}
\label{lineatinfinity}
Points belonging to $\Delta_{\infty}^{(x,y)}$ yield source singularities for $\calh$ provided that the coordinates $(x,y)$ are
generically chosen.
\end{lema}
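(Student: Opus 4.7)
The plan is to work in coordinates adapted to the line at infinity of $\Delta_\infty$ and to read off the local shape of $\omega_1$ near a generic point of $\Delta_\infty^{(x,y)}$. First I would apply Lemma~\ref{lineatinfinitygeneric}: for a generic choice of $(x,y)$ the top-degree component of $\tXX$ equals $f(x,y)[x\partx + y\party + z\partz]$ for some homogeneous $f$ of degree~$d$. Extracting from~(\ref{tXX}) the top-degree-in-$(x,y)$ parts of $F, G, H$ then gives $F = xf + F^{\mathrm{low}}$, $G = yf + G^{\mathrm{low}}$, $H = f + H^{\mathrm{low}}$, where the ``low'' terms have strictly lower degree than $xf, yf, f$ respectively. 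A generic choice of $(x,y)$ also ensures that the chosen point $p\in \Delta_\infty^{(x,y)}$ is neither a singular point of $\tilf_\infty$ nor a point of $\{\textsc{P}=0\}$, and that $f(1,v_0) \ne 0$ at the value $v_0$ corresponding to $p$.

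Next I would introduce the affine chart $u = 1/x$, $v = y/x$ on $\Delta_\infty$, so $p = \{u=0,\, v=v_0\}$. A direct computation in these coordinates (using the homogeneity of $f$ and $dx = -du/u^2$) shows that in this chart the foliation $\tilf_\infty$ is transverse to $\{u=0\}$ exactly when $f(1,v_0)\neq 0$, i.e.\ generically, so that $u$ is a local holomorphic parameter on the leaf $L_\infty$ through $p$, with $v = v(u)$, $v(0)=v_0$. Restricting $\omega_1 = -(H/F)\,dx$ to this leaf and using the homogeneity one gets
\[
H(1/u, v/u) = u^{-d}[f(1,v) + O(u)], \qquad F(1/u, v/u) = u^{-d-1}[f(1,v) + O(u)],
\]
so that a quick simplification yields
\[
\omega_1 \Big|_{L_\infty} = \frac{f(1,v(u)) + O(u)}{f(1,v(u)) + O(u)} \cdot \frac{du}{u}.
\]
Since $f(1,v_0)\neq 0$, the ratio tends to $1$ as $u\to 0$, hence $\omega_1$ has a simple pole at $p$ with residue exactly $+1$.

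Finally I would translate this analytic normal form into the claimed structure for $\calh$. Writing $u = re^{i\theta}$ gives $du/u = dr/r + i\,d\theta$, so $\{{\rm Im}(\omega_1) = 0\}$ reduces to $d\theta = 0$; the leaves of $\calh$ are therefore the radial rays emanating from $u=0$. The orientation convention ${\rm Re}(\omega_1) > 0$ becomes $dr/r > 0$, meaning the rays are oriented outward from $p$. Thus $p$ is a source singularity of $\calh$, as asserted.

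The step I expect to be the main obstacle is the residue computation: what matters for the conclusion is not just that $\omega_1$ has a simple pole, but that the residue is a \emph{positive real} number (otherwise the trajectories would be logarithmic spirals, and a positive residue with ${\rm Re}(\rho)\leq 0$ would not give a source). The delicate point is therefore to extract from Lemma~\ref{lineatinfinitygeneric} precisely the fact that the same polynomial $f$ dominates both $H$ and $F/x$ at infinity, so that the leading coefficients cancel to give residue~$1$ rather than some other complex constant. Once this algebraic coincidence is understood, the rest of the argument is routine.
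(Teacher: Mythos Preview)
Your proposal is correct and follows essentially the same approach as the paper: change to an affine chart $(u,v)$ in which the line at infinity becomes $\{u=0\}$, invoke Lemma~\ref{lineatinfinitygeneric} to identify the common top-degree factor $f$ governing $F$, $G$ and $H$, and read off that $\omega_1$ has a simple pole with residue~$+1$. Your write-up is in fact a bit more direct than the paper's, since you compute $\omega_1$ itself rather than the transformed vector field, and you make the translation ``residue $+1$ $\Rightarrow$ source for $\calh$'' explicit via $du/u = dr/r + i\,d\theta$, which the paper leaves implicit.
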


\begin{proof}
As mentioned, our choice of coordinate is such that $\Delta_{\infty}^{(x,y)}$ neither contains singular points of $\tilf_{\infty}$
nor is invariant by $\tilf_{\infty}$. It then follows that each
point $p$ in $\Delta_{\infty}^{(x,y)}$ locally belongs to a
unique leaf $L_p$ of $\tilf_{\infty}$. Thus
the map that assigns to $p$ the residue at $p$ of the $1$-form $\omega_1$ is globally defined
on $\Delta_{\infty}^{(x,y)}$. To establish the statement, it suffices to check this residue equals $1$
at a generic point of $\Delta_{\infty}^{(x,y)}$. Indeed, by a continuity argument,
this will imply that the residue must be real strictly positive
at every point $p$ in $\Delta_{\infty}^{(x,y)}$ so that all these points constitute source singularities for $\calh$.
Alternatively, the reader may use the point of view discussed in Remark~\ref{forintersectionpoints}.

Let us then consider those points where $\Delta_{\infty}^{(x,y)}$ is transverse to $\tilf_{\infty}$.
Let $(u,v,w)$ be new local affine coordinates for $M$ where
$w$ is the coordinate transverse to $\Delta_{\infty}$ and such that the line at infinity
$\Delta_{\infty}^{(x,y)}$ is given by $\{u = 0\}$. The standard change of coordinates
associated is then given by $(u,v,w) \longmapsto (1/u, v/u, w) = (x,y,z)$. In these new
coordinates, the vector field $\tXX$ becomes (up to multiplication by $w^{1-d}$)
$$
-u^2 F(1/u, v/u) \frac{\partial}{\partial u} + u(-vF(1/u, v/u) + G(1/u, v/u)) \frac{\partial}{\partial v}
+ w H(1/u, v/u) \frac{\partial}{\partial w} \; .
$$
Recall that the polynomial vector field $F(x,y) \partial /\partial x + G(x,y) \partial /\partial y$
has degree $d+1$. Furthermore its component of degree $d+1$ has the form $f(x , y) [x \partial
/\partial x + y \partial /\partial y]$ where $f$ is homogeneous of degree~$d$ (cf. Lemma~\ref{lineatinfinitygeneric}).
In particular, $u^2 F(1/u, v/u)$ has a pole of order $d-1$ over $\{ u=0\}$. Similarly the top-degree homogeneous
component of $-vF(1/u, v/u) + G(1/u, v/u)$ vanishes identically so that $\Delta_{\infty}^{(x,y)}$
represents a polar component of degree~$d-1$ for the component of $\tXX$ in the $v$-direction as
well. Finally, the order of poles of $H(1/u,v/u)$ over $\Delta_{\infty}^{(x,y)}$ equals~$d$.
Formula~(\ref{omega1}) then shows that $\omega_1$ has poles of order~$1$ over $\Delta_{\infty}^{(x,y)}$.
Indeed the principal part of $\omega_1$ is simply $1/u$, since the top-degree homogeneous component
of $\tXX$ is given
by Equation~(\ref{topdegree}). The statement follows at once.
\end{proof}

Before proceeding further, let us summarize the information so far obtained about the singular set of $\calh$ in the
``critical regions''~(2), (3) and (4).
\begin{itemize}
\item[(a)] The regular points of $\tilf_{\infty}$ contained in $\{H = 0\} \cap \Delta_{\infty}$ always
provide singular points for $\calh$. Such singular points correspond to saddles with $2m$ (real) separatrices,
for $m \geq 1$.

\item[(b)] The generic points in $\{P = 0\} \cap \Delta_{\infty}$ also provide singular points for $\calh$.
These points can provide either  poles of order~$\geq 2$ for $\omega_1$ or poles of order~$1$ for $\omega_1$.
In the first case, the corresponding singular behavior of $\calh$ corresponds to a saddle.

\item[(c)] The points belonging to the ``line at infinity'' always yields singular points for $\calh$. More
precisely, they provide simple poles with residue equal to~$1$ and therefore yields source singularities for
$\calh$.
\end{itemize}
Concerning the second item above, in the case where $\omega_1$ has a simple pole at a generic point, relevant
information on the residue cannot be obtained without further information on the vector field. In fact, the
residue (which is given by $H(p)/F^{\ast}(p)$) takes its values in $\C^{\ast}$. Nonetheless, if the vector field
$\tXX$ is supposed to be semi-complete, then the mentioned residue must belong to
$\R^{\ast}$. Therefore, the singular point
corresponds to a sink (resp. source) provided that the residue belongs to $\R_-$ (resp. $\R_+$). This is the
contents of the next lemma.

\begin{lema}\label{curvaP=0_semi-complete}
Assume that $\{P_1 = 0\} \cap \Delta_{\infty}$ is not invariant by $\tilf_{\infty}$ and that $k_1 = 1$. Assume
in addition that $X$ is semi-complete. Then $\omega_1$ has a simple pole at a generic point $p$ of the curve
$\{P_1 = 0\} \cap \Delta_{\infty}$ and the residue of $\omega_1$ at $p$ belongs to $\R^{\ast}$.
\end{lema}

\begin{proof}
Suppose that the curve $\{P_1 = 0\} \cap \Delta_{\infty}$ is not invariant by $\tilf_{\infty}$. Then, at
generic points of $\{P_1 = 0\}$ this curve intersects $\tilf_{\infty}$ transversely.
Consider a generic point $p \simeq (0,0,0) \in \{P_1 = 0\} \cap \Delta_{\infty}$ and local coordinates
$(u,v,z)$ about $p$ where the foliation $\tilf_{\infty}$ is locally represented
by $\partial /\partial u$ and where $P_1(u,v) = u$
(note that the point $p$ can be chosen so that $\{P_1 = 0\}$ is smooth
at $p$). Modulo choosing $p$ sufficiently generic to ensure that neither $H$ nor any other irreducible component
of $P$ vanishes at $p$, the vector field $\tXX$ takes the local form
\[
\tXX = z^{1-d} Q(u,v) \left[ u f(u,v) \frac{\partial}{\partial u} + z h(u,v) \frac{\partial}{\partial z} \right]
\]
where $f(0,0) \ne 0$ and $h(0,0) \ne 0$. Now the semi-complete character of $X$ ensures that the quotient of the
eigenvalues of the linear part of the vector field $u f(u,v) \partial /\partial u + zh(x,y)
\partial /\partial z$ is a rational number.
In other words, the quotient $h(0,0)/f(0,0)$ belongs to $\Q^{\ast} \subseteq \R^{\ast}$. This quotient, however,
also represents the residue of $\omega_1$ at $p \simeq (0,0,0)$. The lemma is proved.
\end{proof}

To close this section, let us introduce the global notion of trajectory for the foliation $\mathcal{H}$ under the
condition that the trajectory in question remains away from the singular set of $\tilf$. For this it is also convenient to
consider the standard Euclidean metric on the affine copy of $\C^3$ in which
the affine coordinates $(x,y,z)$ are defined since we shall also want to define the {\it length}\, of a global trajectory.
Here we shall need the notion of {\it endpoint of a trajectory}, cf. Definition~\ref{justadddefinition1}
below. We emphasize that all definitions provided here are only valid
for (segments of) trajectories remaining away from the singular set of $\tilf_{\infty}$. They will then be
completed in Section~5 once the local behavior of $\calh$ about these singular points will have been
worked out.

To motivate the definition, let us first provide some geometric interpretation for
the foliated $1$-form $\omega_1$. Fix a regular leaf $L_{\infty} \subseteq \Delta_{\infty}$
and a point $p_0 \in L_{\infty}$ that is regular for $\tilf$. Let $l$ be the real trajectory of $\mathcal{H}$ through $p$
and $S$ the cone over $L_{\infty}$. Consider a parametrization $c:[0,1] \rightarrow l$ of the segment of this trajectory
joining $p_0 = c(0)$ to $p_1 = c(1)$. Then the holonomy map ${\rm Hol}(c): \Sigma_0 \rightarrow \Sigma_1$, where
$\Sigma_0, \, \Sigma_1$ are vertical complex lines equipped with the coordinates~$z$, satisfies
\begin{equation}\label{contr}
|({\rm Hol}(c))^{\prime}| = e^{- Re(\int_c \omega_1)} < 1 \, .
\end{equation}
Clearly this formula means that the holonomy map in question is contractive. The role played by these trajectories in our
discussion can be summarized as follows. Nearby a sink singularity $p$ of $\calh$, all $\calh$-trajectories
converge to $p$. Estimate~(\ref{contr}) guarantees that
the distance of the leaves of $\tilf_S$ to $L_{\infty}$ has a local
minimum at $p$ (which may well be zero). On the other hand, nearby a
source $p$, all real trajectories go away from $p$. This means that the
distance of the leaves of $\tilf_S$ to $L_{\infty}$ reaches a local maximum at $p$.

We can now go on two provide a global definition for the trajectories of $\calh$. The reader is again reminded that,
until Section~5, these definitions are only valid provided that the trajectory in question remains away from
the singular set of $\tilf_{\infty}$. On the other hand, recall that at regular points of $\tilf_{\infty}$,
the foliation $\calh$ can have only three types of singularities, namely: sources, sinks and saddles.

\begin{defnc}
\label{justadddefinition1}
A point $p$ regular for $\tilf_{\infty}$ is a future endpoint (resp. past endpoint)\, for a trajectory
of $\calh$ if and only if $\calh$ has a sink singularity (resp. source singularity) at $p$.
\end{defnc}

Thus, by definition, only sinks or sources
singularities of $\mathcal{H}$ (corresponding to maxima or minima for the
distance function between a leaf) can provide {\it endpoints}\, for a trajectory of $\mathcal{H}$. Hence,
to have a global definition of $\calh$-trajectories it only remains to say how they are defined
on a neighborhood of saddle singularity of $\calh$. For this, recall that a saddle singularity has
an even number ($2m$) of separatrices, $m$ of them converging to the singular point and $m$ of them
being ``emanated'' from the singular point. We now have:

\begin{defnc}
\label{justadddefinition2}
Suppose that $l$ is a trajectory of $\calh$ converging, as a separatrix, to a saddle
singularity of $\mathcal{H}$. The trajectory $l$ is then continued from this singular point
by following any of the local separatrices that are emanated from the mentioned singular point.
\end{defnc}

The reader will not fail to observe that a trajectory of $\calh$ passing through saddle singularities
of $\calh$ keeps giving rise to contractive holonomy maps what, in turn, justify the previous definitions.

The length of a $\calh$-trajectory is then defined by adding up its lengths (for the auxiliary metric
fixed above) over foliated coordinates and this procedure is conducted simply by locally following
the orientation. More precisely, consider a point $p$ and denote by $l^+_p$ the (oriented) semi-trajectory
of $\calh$ initiated at $p$. The length of $l^+_p$ is obtained by adding lengths of its (local) segments provided
that this trajectory can locally be continued (and regardless of whether or not we pass several times over the same
points of $M$). The definition of length for an entire trajectory $l$ of $\calh$ (as opposed to a semi-trajectory)
follows naturally.

It follows from the preceding that the length of $l^+_p$ is finite if and only if $l^+_p$ has a future
endpoint (i.e. $l^+_p$ meets a sink singularity). In particular, if $l^+_p$ becomes periodic, then its length
is automatically infinite.

We can now extend the previous definitions to trajectories of $\calh$ defined on all of $M$ and not
only on $\Delta_{\infty}$. In fact, the
real oriented foliation $\mathcal{H}$ or, equivalently, the $1$-form $\omega_1$ has been introduced for
leaves contained in $\Delta_{\infty}$. As soon as a leaf $L_{\infty} \subseteq \Delta_{\infty}$ is fixed,
their definition can naturally be adapted to every leaf on $S$. Going back to our specific case in which
$\omega_1$ is characterized by Formula~(\ref{omega1}), it follows that the local trajectories of $\calh$
on $L \subseteq S$ are determined as the lift in $T_{(x,y(x))}L_{\infty}$ of the vector $v$ where $v$ is
such that $v.H(x,y(x)) / F(x,y(x))$ belongs to $\R_-$. Also, it is to be noted that the corresponding abelian
form $\omega_1$ is independent of the leaf in the same cone $S$. In fact, Equation~(\ref{omega1}) shows that it
depends solely on $L_{\infty}$. These remarks can be summarized as follows.
\begin{enumerate}
\item The trajectory of $\calh$ through a point $(p_1, p_2,p_3)$ projects on the trajectory of $\calh$ through the point
$(p_1,p_2,0)$ which, in addition, is globally contained in the plane $\{z=0\}$.

\item Since the absolute value of the coordinate ``$z$'' is always decreasing over a trajectory of $\calh$, it follows that
the trajectory of $\calh$ through $(p_1, p_2,p_3)$ has infinite length if and only if the the trajectory of $\calh$ through $(p_1, p_2,0)$
has infinite length.
\end{enumerate}

The following simple lemma will also be important later on. For this lemma we should take into account that, whereas $\Delta_{\infty}^{(x,y)}$
can be chosen ``generic'', it always possesses points of tangency with the foliation $\tilf_{\infty}$.

\begin{lema}
\label{remainingcompact}
For a generic choice of the affine coordinates $(x,y)$ an oriented trajectory of $\calh$ never intersects
$\Delta_{\infty}^{(x,y)}$. Besides there is a compact part $K \subset \C^3$ and a constant $C_K$ so that
the following holds: every segment of trajectory $l$ of $\calh$ whose total length is greater than $C_K$
verifies the condition that the
part of $l$ lying in $\C^3 \setminus K$ is less than, say, $1/10$ of the total length of the segment in question.
\end{lema}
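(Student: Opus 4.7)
The lemma contains two distinct assertions, which I would handle separately.

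For the non-intersection assertion, I would appeal directly to Lemma~\ref{lineatinfinity}: a generic choice of the affine coordinates $(x,y)$ makes $\Delta_{\infty}^{(x,y)}$ transverse to $\tilf_{\infty}$ and disjoint from its singular set, so that \emph{every} point of $\Delta_{\infty}^{(x,y)}$ is a source singularity of $\calh$ with $\omega_1$-residue equal to $1$. In the local normal form $\omega_1 = du/u + \text{(holomorphic)}$ valid near such a source (with $\{u=0\}$ representing $\Delta_{\infty}^{(x,y)}$), the oriented trajectories of $\calh$ are the radial rays in $u$ with $|u|$ strictly increasing along the orientation. Hence a forward trajectory can only exit a punctured neighborhood of a source, never reach the source itself. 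This immediately rules out intersections with $\Delta_{\infty}^{(x,y)}$.

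For the confinement assertion, I would take $K$ to be a large closed polydisc $\{|x|\leq R,\,|y|\leq R,\,|z|\leq R\}\subset\C^{3}$. The choice of $R$ is governed by two monotonicity properties. First, by the normal-form analysis above, $R$ can be taken large enough so that in the region $\{|x|>R\}$ (resp. $\{|y|>R\}$) of any chart containing a piece of $\Delta_{\infty}^{(x,y)}$, the principal part $du/u$ of $\omega_1$ dominates the holomorphic remainder, forcing $|x|$ (resp. $|y|$) to decrease strictly along every forward trajectory intersecting this region. Second, the contractivity estimate~(\ref{contr}) combined with the fact that the trajectories project to $\{z=0\}$ implies that $|z|$ decreases along every forward trajectory of $\calh$. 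Together these show that once a forward trajectory enters $K$, it remains in $K$ for all subsequent $\omega_1$-time, so the portion of any segment $l$ lying outside $K$ consists of at most one initial connected piece.

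The length $L_{\mathrm{out}}$ of this initial piece can be estimated by integrating $\sqrt{|dx|^{2}+|dy|^{2}+|dz|^{2}}$ along a radial trajectory $u=se^{i\theta_{0}}$, giving an expression comparable to $(|x_{0}|-R)\sqrt{1+|v_{0}|^{2}}$ where $(x_{0},y_{0},z_{0})$ is the initial endpoint of the segment. The main obstacle to be overcome is converting this \emph{local} estimate into the relative bound $L_{\mathrm{out}}<L(l)/10$ uniformly over all long segments. The idea is to exploit the discrepancy between Euclidean speed and $\omega_1$-speed: near $\Delta_{\infty}^{(x,y)}$, a segment of large Euclidean length has only logarithmically small $\omega_1$-time, whereas inside $K$ (outside small neighborhoods of the singularities of $\calh$), the form $\omega_1$ is bounded, so a definite fraction of Euclidean length per unit $\omega_1$-time is available. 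By choosing $C_K$ large compared to the maximum Euclidean length $\int_{\partial K}|\omega_1|^{-1}$ integrated along the relevant ``escape directions,'' one forces any segment with $L(l)>C_K$ to accumulate enough $\omega_1$-time inside $K$ to dominate the bounded $\omega_1$-time of its outside piece, yielding the claimed fractional bound.
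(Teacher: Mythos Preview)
Your treatment of the first assertion glosses over the main geometric subtlety. Lemma~\ref{lineatinfinity} establishes the source behaviour only at points where $\Delta_{\infty}^{(x,y)}$ is \emph{transverse} to $\tilf_{\infty}$ (its proof computes the residue under that hypothesis). However generic the line is, it will have finitely many tangency points $q_1,\dots,q_r$ with the foliation, and it is precisely these that require separate analysis. The paper's proof isolates small neighbourhoods $W_i$ of the $q_i$, shows that a quadratic tangency still produces a source-type singularity for $\calh$ on the leaf $L_i$ through $q_i$, and only then concludes that no trajectory reaches $\Delta_{\infty}^{(x,y)}$. Your normal form $\omega_1 = du/u + (\text{holomorphic})$ is simply unavailable near the $q_i$, because the leaf is not parametrised by $u$ there.

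More seriously, your argument for the second assertion does not close. Even if one grants the monotonicity claim (so that the portion of a segment outside your polydisc $K$ is a single initial arc), the Euclidean length of that arc is essentially $|x_0|-R$, which is unbounded as the initial point recedes toward $\Delta_{\infty}^{(x,y)}$. A segment $l$ starting at $|x_0|=10^{6}R$ and ending one unit after first hitting $\partial K$ has $L(l)\approx L_{\mathrm{out}}(l)$, so the ratio is close to $1$, not below $1/10$; this defeats any finite choice of $C_K$. Your final paragraph tries to repair this via the ``discrepancy between Euclidean speed and $\omega_1$-speed'', but that discrepancy goes the wrong way: a long Euclidean excursion near infinity has small $\omega_1$-time, yet the lemma asks for a bound on Euclidean lengths, and nothing in your argument forces the $K$-portion to be long in Euclidean terms just because $L(l)>C_K$. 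The integral ``$\int_{\partial K}|\omega_1|^{-1}$'' you invoke is not a quantity that controls this.

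The paper's mechanism is different in kind: rather than trapping the trajectory in a large polydisc, it takes $K$ to be the complement of a \emph{small} neighbourhood $V\cup\bigcup W_i$ of $\Delta_{\infty}^{(x,y)}$ inside the compactified $\Delta_{\infty}$, and argues that the length of any connected passage through $V\cup\bigcup W_i$ is at most $1/30$ of the length of the adjacent segment in $K$ between two successive passages. The point is that this comparison is \emph{local} to each excursion and can be made as favourable as desired by shrinking $V$ and the $W_i$; it never requires a uniform bound on $L_{\mathrm{out}}$. Summing over passages then yields the global $1/10$ ratio once $L(l)>C_K$.
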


\begin{proof}
Let $q_1, \ldots , q_r$ be the points where $\Delta_{\infty}^{(x,y)}$ is tangent to $\tilf_{\infty}$ and fix
a small neighborhood $W_i$ of $q_i$, $i=1, \ldots ,r$. Then there is a ``tubular neighborhood'' $V$ of
$\Delta_{\infty}^{(x,y)} \setminus \bigcup_{i=1}^r W_i$ so that the following holds: for every point
$p \in \partial V \setminus \bigcup_{i=1}^r W_i$ the trajectory of $\calh$ through $p$ is transverse to
$\partial V$ and oriented outwards $V$. In other words, no trajectory of $\calh$ may enter $V$ without
entering first some $W_i$.

On the other hand the structure of $\calh$ trajectories on $W_i$ is easy to describe. If
$\Delta_{\infty}^{(x,y)}$ is ``sufficiently generic'', then the tangency of $\Delta_{\infty}^{(x,y)}$
at $\tilf_{\infty}$ at $q_i$ is quadratic (for all $i \in \{1, \ldots, r\}$). In particular if $L_i$ is the
local leaf of $\tilf_{\infty}$ through $q_i$, the point $q_i$ is itself a source-type singularity for the
trajectories of $\calh$. Thus no trajectory of $\calh$ actually intersects $\Delta_{\infty}^{(x,y)}$. Finally,
if $V$ and the neighborhoods $W_i$ are sufficiently small, then the length of a segment of trajectory lying in
$V\cup \bigcup_{i=1}^r W_i$ is less than, say, $1/30$ the length of the segment of same trajectory in
$K=\C^3 \setminus V\cup \bigcup_{i=1}^r W_i$ which is determined by two ``successive'' passages of the trajectory
in question through $V\cup \bigcup_{i=1}^r W_i$. The statement then follows.
\end{proof}

\begin{obs}\label{lastsection3}
{\rm In certain cases it may be useful to make a ``non-generic'' choice of the affine coordinates $(x,y)$ so as to have a
line at infinity $\Delta_{\infty}^{(x,y)}$ passing through certain singular points of $\tilf_{\infty}$. We shall briefly
mention one situation of this type later on, cf. Remark~(\ref{tobeincluded}).}
\end{obs}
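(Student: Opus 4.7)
The statement is a methodological remark pointing forward rather than a proposition with formal content, so there is no theorem to prove in the usual sense; what needs substantiating is the assertion that a non-generic choice of the affine coordinates $(x,y)$ can sometimes be preferable to the generic choice used throughout the section. My plan is therefore to explain what would have to be verified in order for such a choice to be admissible and useful, following the shape of the preceding lemmas.

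First I would pin down which hypotheses of Section~3 actually break when $\Delta_{\infty}^{(x,y)}$ is moved to pass through a prescribed singular point $q$ of $\tilf_{\infty}$. The only statements that rely on the genericity of $(x,y)$ are Lemma~\ref{lineatinfinitygeneric} (which gave the top-degree form~(\ref{topdegree})), Lemma~\ref{lineatinfinity} (which identified $\Delta_{\infty}^{(x,y)}$ as a locus of source singularities of $\calh$), and the conclusion in Lemma~\ref{remainingcompact} that $\calh$-trajectories cannot enter a neighborhood of $\Delta_{\infty}^{(x,y)}$ except through the finitely many tangency points. Outside $q$ all three statements continue to hold verbatim, since the coordinate change $(u,v,w) \longmapsto (1/u,v/u,w)$ used in the proof of Lemma~\ref{lineatinfinity} is local and sees only points where $\Delta_{\infty}^{(x,y)}$ is regular and transverse to $\tilf_{\infty}$. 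Hence the verification reduces to a local analysis at the single point~$q$.

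Second I would make the benefit concrete. Forcing $\Delta_{\infty}^{(x,y)}$ through $q$ exchanges an isolated singularity of $\tilf_{\infty}$ sitting in the $(x,y)$-affine part for a configuration at infinity, where the $1$-form $\omega_1$ admits a computation in coordinates $(u,v,w)$ adapted to the separatrices of $\tilf_{\infty}$ at $q$. This is precisely the kind of vantage point needed when $q$ is one of the ``simple'' singularities listed in the Introduction (non-degenerate of Poincar\'e type, or a codimension~$1$ saddle-node), because then the separatrices at $q$ contained in $\Delta_{\infty}$ can be made to lie in the coordinate axes of the new chart, and the residues or the saddle-node normal form computation for $\omega_1$ become routine.

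The main obstacle — and the real content that a rigorous treatment would need — is to check that the global trajectory analysis developed later, in particular the behavior encapsulated by Lemma~\ref{remainingcompact}, is not spoiled by the new choice. Concretely, one must verify that $\calh$-trajectories do not suddenly acquire a way of approaching $\Delta_{\infty}^{(x,y)}$ through $q$ (so that finite-length/infinite-length dichotomies are preserved), and that the formula for $\omega_1$ in the new chart still lends itself to the contractive-holonomy estimate~(\ref{contr}). Since this verification depends on the specific type of singularity $q$ being absorbed and is not needed here, the remark legitimately defers it to the later Remark~(\ref{tobeincluded}), where a concrete instance is treated.
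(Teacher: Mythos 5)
This is a remark with no proof in the paper, so there is nothing to check line by line; the only question is whether your reconstruction of \emph{why} the non-generic choice is useful matches the paper's intent. Your structural analysis is sound: you correctly isolate Lemmas~\ref{lineatinfinitygeneric}, \ref{lineatinfinity} and~\ref{remainingcompact} as the statements whose proofs invoke genericity, you correctly observe that they survive away from the chosen singular point $q$, and you correctly note that the remark defers the substance to Remark~\ref{tobeincluded}.

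Where you diverge from the paper is in the stated \emph{benefit} of the non-generic choice. You present it as a computational convenience — putting $q$ at infinity so that the residue or normal-form computation for $\omega_1$ along the separatrices becomes routine. That is not what Remark~\ref{tobeincluded} does. There the point is dynamical: the singularity one absorbs into $\Delta_{\infty}^{(x,y)}$ is a \emph{dicritical} one (all eigenvalues strictly positive), which by Lemma~\ref{quotientinR+} would otherwise act as a sink for $\calh$ and hence as a future endpoint for its trajectories. Since points of $\Delta_{\infty}^{(x,y)}$ are sources for $\calh$ (residue $+1$ poles of $\omega_1$, Lemma~\ref{lineatinfinity}), placing the line through that singularity superposes the two behaviors and cancels the sink character, so the singularity no longer terminates $\calh$-trajectories. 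Combined with the parabolicity of the leaves of $\tilf_{\infty}$ and the argument of Proposition~\ref{hyperbolicleaves}, this forces the existence of a \emph{second} sink singularity elsewhere in $\Delta_{\infty}$ — which is the ``further information'' the authors are after. So the mechanism is a global trajectory argument, not a local simplification of $\omega_1$. Your proposal is not wrong as a description of what must be re-verified, but it misattributes the purpose of the construction.
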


\section{Renormalization in the exceptional divisor}

Our fundamental tool to derive Theorem~A is a procedure of renormalization for
the complex time near the divisor of poles of $\tXX$, i.e. near $\Delta_{\infty}$. This construction will play
a major role in the rest of the paper. Let us then begin by describing this procedure. We shall keep the notations of Section~3
emphasizing the $3$-dimensional case though all the results presented below are valid in arbitrary dimensions.

As before, let $X$ stand for a homogeneous polynomial vector field of degree $d \geq 2$ and assume that $X$ is not a multiple
of the radial vector field. Denote by $\fol$ the foliation associated to $X$. It is convenient to remind the reader
that ``leaves'' for the foliation $\fol$ are defined as in Section~2.2. In particular, the restriction of $X$ to a leaf
$L$ of $\fol$ may contains zeros, poles and essential singularities of $X$. All these non-regular points of $X$ form, however,
a discrete subset of $L$ with respect to the intrinsic topology of the leaf $L$ as Riemann surface (unless the leaf
in question is fully contained in the divisor of zeros and poles of $X$).
The presence of the vector field $X$ allow us to endow every (regular) leaf $L$ as before
with the foliated time-form $dT$ defined, as in Section~2.2, by letting $dT.X =1$. The time-form will also be denoted
by $dT_L$ when we want to emphasize the leaf $L$ under consideration. As observed,
the time-form is well-defined provided that $L$ is not contained in the divisor of zeros and poles of $X$.
If the vector field $X$ is supposed to be semi-complete, then its restriction to $L$ is everywhere holomorphic and the
orders of its zeros cannot exceed~$2$, cf. Lemma~\ref{Adim2}. It follows at once that $dT$ is meromorphic on all of $L$
and it has no zeros. Besides the poles of $dT$ have order bounded by~$2$.
Finally, recall also that given a curve $c:[0,1] \rightarrow L$ joining two points
$c(0)$, $c(1)$ in $L$ satisfying $X(c(0)) \neq 0$, $X(c(1)) \neq 0$, the integral
$\int_c dT$ measures the time needed to cover $c$ from $c(0)$ to $c(1)$ following
the flow of $X$ so long $X$ is semi-complete. In fact, when a vector field is semi-complete the notion of ``time''
arising from its semi-global flow is well-defined.

Consider now $\tXX$, the vector field induced by $X$ on $M$. Throughout this section, generic affine coordinates $(x,y,z)$
as in Section~3 are supposed to be fixed. In particular, $\Delta_{\infty}^{(x,y)}$ neither contains singular points
of $\tilf_{\infty}$ nor is invariant by this foliation. Since $\tXX$ has poles over $\Delta_{\infty}$,
the time-form is not defined for the regular leaves of $\tilf_{\infty}$. It is, however, possible to define a
``renormalized time-form'' on a neighborhood of each regular point $p$ of a leaf $L_{\infty} \subseteq
\Delta_{\infty}$. This goes as follows. Let $L_{\infty} \subseteq \Delta_{\infty}$ be a regular
leaf of $\tilf$ and let $p \in L_{\infty}$ be a regular point of $L_{\infty}$ which is not singular
for $\tilf$. Choose local coordinates $(u,v,w)$, $\{ w=0\} \subset \Delta_{\infty}$ around $p$ where the foliation becomes locally
given by the vector field $\partial / \partial u$. In these coordinates, the vector field $\tXX$ is given by ${w}^{1-d} f(u,v,w)
\partial/\partial u$. The ``renormalized time-form'' over $L_{\infty}$ is then defined as
$du/f(u,0,0)$. In other words, it is obtained from $\tXX$ by eliminating its pole
component. Naturally there is no canonical choice for the coordinate $w$ and this prevents us
from having a global definition for the ``renormalized time-form''. In accurate terms, the local form
$du/f(u,0,0)$ is not globally defined on $L_{\infty}$ because, when a change of coordinates is
performed, two local definitions of this ``renormalized time-form'' will agree only up to a
multiplicative constant. Therefore, whereas the previous construction
does not define an Abelian form on $L_{\infty}$, it endows $L_{\infty}$ with
an affine structure (for further details we refer to \cite{guillotreb}). The purpose
of this section is to exploit this affine structure to estimate the domain of definition of the
solutions of $\tXX$. As it will be seen, precise estimates can be obtained in this way as long as
the ``evolution'' of the coordinate ``$z$'' is well-controlled (where ``$z$'' refers to the affine coordinates
$(x,y,z)$).

Although we have defined the ``renormalized time-form'' only at regular points of $\tilf$,
this form admits a natural asymptotic extension to the singularities of $\tilf$ lying in $\Delta_{\infty}$.
Details on these extensions will be given as they become necessary.

Now let us return to homogeneous polynomial vector fields on $\C^3$.
Fix a point $p_0$ contained in the singular set of $\tilf_{\infty}$.
Suppose that the restriction of $\tilf$ to a neighborhood of $p_0$
is given by Equation~(\ref{tXX}) so that
\begin{equation}\label{campo_forma_usual}
\tXX = \frac{1}{z^{d-1}} \left[ F(x,y) \frac{\partial }{\partial x}
+ G(x,y) \frac{\partial }{\partial y} + zH(x,y) \frac{\partial
}{\partial z} \right].
\end{equation}
With the notations of Section~\ref{sechom}, let $\textsc{P} = {\rm
g.c.d.}\, (F,G)$ so that $F = \textsc{P}. a (x,y)$ and $G =
\textsc{P} . b (x,y)$. Denoting by $\overline{\textsc{P}}$
the greatest common divisor between $\textsc{P}$ and $H$, we can set
$\textsc{P} = \overline{\textsc{P}} \textsc{P}^{\ast}$ and
$H = \overline{\textsc{P}} H^{\ast}$. It follows that
\begin{equation}
\label{campo}
\tXX = \frac{\overline{\textsc{P}}}{z^{d-1}} \left[ \textsc{P}^{\ast}(x,y) \left( a (x,y)
\frac{\partial }{\partial x} + b (x,y) \frac{\partial }{\partial y}
\right) + zH^{\ast} (x , y) \frac{\partial}{\partial z} \right]
\end{equation}
where $p_0 \simeq (0,0,0)$. If $\textsc{P}^{\ast}$ is not constant,
the curve in $\Delta_{\infty}$ induced by $\{ \textsc{P}^{\ast} =0\}$ is
singular for $\tilf$, though its generic points are regular for $\tilf_{\infty}$. From this
point of view $\{ \textsc{P}^{\ast} =0\} \cap \Delta_{\infty}$ may or may not be invariant by $\tilf_{\infty}$.
Nonetheless, when dealing with semi-complete vector fields, the following holds.

\begin{prop}
\label{aproposition}
Assume that $X$ is a homogeneous semi-complete vector field with degree greater than or equal to~$3$. Suppose that
$\tXX$ is as in Equation~(\ref{campo}). Then no irreducible component of $\{\textsc{P}^{\ast} = 0\} \cap \Delta_{\infty}$
is invariant by $\tilf_{\infty}$. In other words, a regular leaf of $\tilf_{\infty}$ can intersect the singular set
of $\tilf$ only over a discrete set (for the intrinsic topology on the leaf in question).
\end{prop}

\begin{proof}
As already observed, codimension~$1$ components of the singular set of $\tilf$ coincide with the set
$\{ \textsc{P}^{\ast} =0\} \cap \Delta_{\infty}$. Without loss of generality, denote by
$\textsc{P}_1$ an irreducible component of $\textsc{P}^{\ast}$ giving rise to an (irreducible) curve $\mathcal{C}=
\{ \textsc{P}_1 =0\} \cap \Delta_{\infty}$ that happens to be invariant by $\tilf_{\infty}$. We are going
to conclude from this condition that $X$ cannot be semi-complete.

To do this, denote by $m \geq 1$ be the multiplicity of $\textsc{P}_1$ as component of $\textsc{P}^{\ast}$.
At a generic point of $\mathcal{C}$, local coordinates $(u,v,w)$
can be found so that $\{ w=0\} \subset \Delta_{\infty}$ and $\mathcal{C}$ is identified with $\{v=0, w=0\}$. In these
coordinates, we naturally have $\textsc{P}_1 (u,v) = v$. Besides, if the chosen point is sufficiently generic, then we also
have $H(0,0) \neq 0$ and $a(0,0) \ne 0$. On the other hand, $b$ must be divisible by $v$ since $\mathcal{C}$ is invariant
by $\tilf_{\infty}$.

By means of the above defined local coordinates, $\tXX$ can be identified to a vector field defined about the
origin of $\C^3$. Since $m \geq 1$, the first non-zero homogeneous component
of $\tXX$ at the origin is given by
\[
\tXX^H = w^{1-d} v^k \left[ \da v \frac{\partial }{\partial u} + \dl w \frac{\partial }{\partial w} \right]
\]
for some constants $\dl = H(0,0) \in \C^{\ast}$, $k \geq 0$ and $\da \in \C$. Note that $\da \ne 0$ if and only
if $m =1$. Furthermore, $k$ is the greatest (non-negative) integer such that $v^k$ divides $\overline{\textsc{P}}$.
The hyperplanes $\{v = {\rm cte}\}$ are invariant by the foliation associated to $\tXX^H$. For each non-zero
constant (${\rm cte}$) sufficiently small, the differential equation associated to $\tXX^H$ is such that
$\dot{w} = {\rm cte}^k \dl w^{2-d}$. Since $d \geq 3$, the vector field associated to this differential equation
has a pole at $w=0$. In turn, the existence of this pole ensures that the corresponding solution is multivalued contradicting
the assumption that $X$ is semi-complete. The proof of the lemma is over.
\end{proof}

Concerning the case of {\it quadratic homogeneous}\, vector fields, the preceding lemma can nicely be complemented
under the additional assumption that the singular set of $X$ has codimension greater than or equal to~$2$. Namely, we have:

\begin{prop}\label{aproposition2}
Assume that $X$ is a quadratic homogeneous semi-complete vector field whose singular set has
codimension greater than or equal to~$2$. Suppose that $\tXX$ is as in Equation~(\ref{campo}). Then no irreducible
component of $\{\textsc{P}^{\ast} = 0\} \cap \Delta_{\infty}$ is invariant by $\tilf_{\infty}$.
\end{prop}

\begin{proof}
Let $\textsc{P}_1$ denote a non-trivial irreducible component of $\textsc{P}^{\ast}$ and assume for a contradiction that $\{\textsc{P}_1
= 0\} \cap \Delta_{\infty}$ is invariant by $\tilf_{\infty}$. Denote by $L$ the intersection $\{\textsc{P}_1 = 0\}
\cap \Delta_{\infty}$ and note that $L$ is contained in a leaf of $\tilf_{\infty}$. Fix $p \in L$ such that $\tilf_{\infty}$
is regular at $p$. The point $p$ can be chosen so that neither
$H$ nor any other irreducible component of $\textsc{P}^{\ast}$ vanishes at $p$. Next, consider local coordinates
$(u,v,w)$ about $p$, $\{ w=0\} \subset \Delta_{\infty}$, so that
\begin{itemize}
\item[(a)] $p \simeq (0,0,0)$
\item[(b)] $\textsc{P}(u,v) = v$
\item[(c)] the foliation $\tilf_{\infty}$ is ``horizontal'', i.e. is represented by the vector field $\partial /\partial u$.
\end{itemize}
Denote by $\da$ the order of $\textsc{P}_1$ as component of $\textsc{P}^{\ast}$ and note that $\alpha$ cannot
exceed~$3$ since $X$ is supposed to be of degree~$2$.
In the above coordinates the vector field $\widetilde{X}$ becomes
\[
\widetilde{X} = \frac{1}{w} \left[ v^{\da} f(u,v) \frac{\partial }{\partial u} + w g(u,v) \frac{\partial}{\partial w} \right] \, .
\]
Besides, modulo taking $p$ sufficiently generic, it can also be assumed that both $f(0,0)$ and $g(0,0)$ are
different from zero.

Let $\tilf$ denote the foliation associated to $\widetilde{X}$.
To complete the proof of the proposition, we are going to show
the local holonomy of $\tilf$ with respect to the invariant axis
$\{u = v = 0\}$ does not coincide with the identity map. This contradicts the assumption that $\widetilde{X}$ is semi-complete
since the restriction of $\widetilde{X}$ to the mentioned axis is the regular (constant) vector field $g(0,0) \partial /\partial x$
(the reader is reminded that $g(0,0) \neq 0$).
In fact, being regular, the integral of the time-form over a loop encircling the origin is zero. If the local holonomy of the
mentioned invariant axis is not trivial, then this loop lifts into an open path in a nearby leaf $L$. Since the intrinsic distance
in $L$ between the endpoints of this open path is bounded below by a positive constant, it follows easily the existence of an open
path $c \subset L$ over which the integral of the corresponding time-form $dT_L$ vanishes what is impossible for a semi-complete
vector field, cf. Section~2.2.

To compute the local holonomy map associated to $\{u = v = 0\}$ with respect to the foliation $\tilf$, let
us consider the loop given by $w(t) = e^{2\pi i t}$. Set $h(x,y) = f(x,y)/g(x,y)$ and note that $h$ is holomorphic
about $(0,0)$ with $h(0,0) \neq 0$
since both $f(0,0)$ and $g(0,0)$ are supposed to be different from zero. Now, it
follows that $(u(t) ,v(t))$ satisfies the differential equation
\[
\begin{cases}
\frac{du}{dt} = \frac{du}{dw} \frac{dw}{dt} = 2\pi i v^{\da} h(u,v) \, , \\
\frac{dv}{dt} = \frac{dv}{dw} \frac{dw}{dt} = 0
\end{cases} \, .
\]
Next, by setting
\[
u(t) = \sum_{j,k} a_{jk}(t) u_0^j v_0^k \quad \text{and} \quad v(t) = \sum_{j,k} b_{jk}(t) u_0^j v_0^k \, ,
\]
the equation $dv /dt = 0$ implies that $b_{01}(t)$ is constant equal to~$1$ and that $b_{jk}(t)$ vanishes identically
for every pair $(j,k) \ne (0,1)$. This is equivalent to saying that $v(t) = v_0$ for all $t$.

Consider now the second equation. Let $h(u,v) = \sum_{n,m} h_{nm} u^n v^m$. From the equation
$du/dt = 2\pi i v^{\da} h(u,v)$, we conclude that
$$
\sum_{j,k} a_{jk}^{\prime}(t) u_0^j v_0^k = 2\pi i v_0^{\da} \sum_{n,m} h_{nm} \left( \sum a_{jk}(t) u_0^j v_0^k \right)^n y_0^m
$$
which, in turn, is equivalent to
$$
\sum_{j,k} a_{jk}^{\prime}(t) u_0^j v_0^k = \sum_{n,m} 2\pi i h_{nm} \left( \sum_{j,k} a_{jk}(t) u_0^j v_0^k \right)^n v_0^{m+\da} \, .
$$
Comparing the coefficient of the monomial $v_0^{\da}$ in both right and left hand sides of the preceding equation, it follows that
\[
a_{0\da}^{\prime}(T) = 2\pi i h_{00} \, ,
\]
where $h_{00} =h(0,0) \neq 0$. Since $a_{0\da}(0)$ equals zero, we obtain that $a_{0\da}(t) = 2\pi i h_{00} t$ and, therefore,
\[
a_{0\da}(1) = 2\pi i h_{00} \ne 0 \, .
\]
Hence the holonomy map $(u_0,v_0) \mapsto (u(1) ,v(1))$ does not coincide with the identity since $u(1)$ is not independent
of $v_0$. This yields the desired contradiction and ends the proof of Proposition~\ref{aproposition2}.
\end{proof}

Unlike the time-form, the {\it renormalized time-form} is defined for every regular leaf of the foliation whether or not the leaf
is contained in the zero/pole divisor of $X$. Propositions~\ref{aproposition} and~\ref{aproposition2} then imply that
the ``renormalized time-form'' can be defined over every leaf $L_{\infty} \subseteq \Delta_{\infty}$ provided that $X$
satisfies the conditions in the preceding statements. In view of this, and unless otherwise stated, throughout the rest of
this paper we shall assume the following.

\noindent {\bf General assumption}: No irreducible component of the curve $\{ \textsc{P}^{\ast} =0\} \cap \Delta_{\infty}$ is
invariant by $\tilf_{\infty}$.

As pointed out above, under the semi-complete assumption, $\{ \textsc{P}^{\ast} =0\} \cap \Delta_{\infty}$ is not
invariant for the foliation induced by the vector field $X$. Nonetheless, for semi-complete vector fields,
a lot more can be said about this (non-$\tilf_{\infty}$-invariant) curve.
In particular, when $X$ is a homogeneous semi-complete
vector field with degree at least~$3$, the proof of Proposition~\ref{aproposition} also yields:

\begin{prop}\label{order1P}
Assume that $X$ is a homogeneous polynomial semi-complete vector field with degree $d \geq 3$. Let $\tXX$ be
as in Equation~(\ref{campo}) and assume also that $\textsc{P}^{\ast}$ is not invertible. Then every non-trivial irreducible
component of $\textsc{P}^{\ast}$ has order~$1$. Furthermore, every non-trivial irreducible
component of $\textsc{P}^{\ast}$ must also appear as an irreducible component of $ \overline{\textsc{P}}$.
\end{prop}

\begin{proof}
Assume that $\textsc{P}^{\ast}$ is not invertible. Let $\textsc{P}_1$ be a non-trivial irreducible component of
$\textsc{P}^{\ast}$ and denote by $m \geq 1$ the order of $\textsc{P}_1$ w.r.t $\textsc{P}^{\ast}$. Since $\{\textsc{P}_1
=0\} \cap \Delta_{\infty}$ is not invariant by $\tilf_{\infty}$, at a generic point of $\{\textsc{P}_1
=0\} \cap \Delta_{\infty}$ this curve is transverse to $\tilf_{\infty}$. A generic point $p$ can be chosen so that,
in addition, neither $H$ nor any other irreducible component of $\textsc{P}^{\ast}$ vanishes at $p$. Finally, we can also
suppose that $\{\textsc{P}_1 = 0\}$ is smooth at $p$. Next, consider local coordinates $(u,v,w)$ about $p$, $\{ w=0\} \subset
\Delta_{\infty}$, satisfying the following conditions:
\begin{itemize}
\item $p \simeq (0,0,0)$.
\item $\textsc{P}_1(u,v) = v$.
\item The foliation $\tilf_{\infty}$ is locally represented by the vector field $\partial /\partial v$.
\end{itemize}
In other words, we have chosen coordinates $(u,v,w)$ where $\tXX$ takes the form
\[
\tXX = w^{1-d} \overline{\textsc{P}}(u,v) \left[ v^m g(u,v) \frac{\partial}{\partial v} + w h(u,v) \frac{\partial}{\partial w} \right]
\]
where both $g(0,0) , \, h(0,0)$ are different from {\it zero}.

Suppose that $m > 1$. Then, the first non-zero homogeneous component of $\tXX$ at the origin (identified to $p$) is given by
\[
\tXX^H = \dl v^k w^{2 - d} \frac{\partial}{\partial w}
\]
where $\dl = H(0,0)$ and $k$ is the order of $\textsc{P}_1(u,v) = v$ w.r.t. $\overline{\textsc{P}}$. Since $d \geq 3$,
the restriction of $\tXX^H$ to the invariant planes $\{v = {\rm cte}\}$
is not semi-complete provided that ${\rm cte}$ is different from
zero. This contradicts the semi-complete assumption over $X$, cf. Section~2.2. It then follows that $m=1$.

From now on we have $m = 1$. It remains to prove that $k$ must be strictly positive. Thus, let us assume for a
contradiction that $k = 0$. The first
non-zero homogeneous component of $\tXX$ at the origin (identified to $p$) is hence given by
\[
\tXX^H = w^{1-d} \left[ \da v \frac{\partial }{\partial v} + \dl w \frac{\partial }{\partial w} \right]
\]
for some constant $\da \in \C^{\ast}$ and where $\dl$ equals $H(0,0)$. Now, the restriction of the foliation associated to
$\tXX^H$ to the invariant hyperplane $\{v = 0\}$ is given by $w^{2-d} \partz$ which is not semi-complete provided that
$d \geq 3$, cf. Section~2.2. The proposition is proved.
\end{proof}

As an immediate consequence, we have the following:

\begin{coro}\label{trivializingP}
Assume that $X$ is a homogeneous polynomial semi-complete vector field with degree greater than or equal to~$3$. Suppose
also that the singular set of $X$ has codimension least~$2$. Then $\textsc{P}^{\ast}$ is invertible, i.e. $\textsc{P}^{\ast}$ is
a constant.
\end{coro}

\begin{proof}
Assume that $\textsc{P}^{\ast}$ is not invertible and consider a non-trivial irreducible component $\textsc{P}_1$ of
$\textsc{P}^{\ast}$. According to Proposition~\ref{order1P}, $\textsc{P}_1$ must also be an irreducible component of
$\overline{\textsc{P}}$. This immediately implies that the singular set of $X$ has codimension~$2$ and the statement follows.
\end{proof}

Concerning the case of homogeneous polynomial vector fields of degree $d=2$, the following holds:

\begin{prop}
Suppose now that $X$ is a homogeneous quadratic semi-complete vector field. Suppose that $\textsc{P}^{\ast}$
is not invertible. Then every non-trivial irreducible component of $\textsc{P}^{\ast}$ has order~$1$.
\end{prop}

\begin{proof}
Let $\textsc{P}_1$ be a non-trivial irreducible component of $\textsc{P}^{\ast}$ and denote by $m$ the order of
$\textsc{P}_1$ w.r.t $\textsc{P}^{\ast}$. Owing to the general assumption (in turn justified by Propositions~\ref{aproposition}
and~\ref{aproposition2}), the algebraic curve $\{\textsc{P}_1 = 0\} \cap \Delta_{\infty}$
is not invariant by $\tilf_{\infty}$ (cf. Proposition~\ref{aproposition2}). This means that this
algebraic curve is transverse to $\tilf_{\infty}$ at a generic point of it. Moreover, about a sufficiently generic
point of $\{\textsc{P}_1 = 0\} \cap \Delta_{\infty}$, there are local coordinates $(u,v,w)$ where $\tXX$ becomes
\[
\tXX = w^{1-d} \overline{\textsc{P}} \left[ v^m g(u,v) \frac{\partial}{\partial v} + w h(u,v) \frac{\partial}{\partial w} \right]
\]
with both $g(0,0) , \,  h(0,0)$ different from {\it zero}. In particular, for fixed $u$, the
``$2$-dimensional vector field'' $v^m g(u,v) \partial /\partial v +
w h(u,v) \partial /\partial w$ has exactly one eigenvalue different from zero at the origin. By means of an elementary and well-known
calculation, this fact ensures that the local holonomy arising from the axis $\{ v=0\}$ cannot coincide with the identity. Therefore
the corresponding vector field cannot be semi-complete since its restriction to the mentioned axis is regular at $\{ v=w=0\}$
(details are as in the proof of Proposition~\ref{aproposition2}).
\end{proof}

Unlikely the case of homogeneous polynomial semi-complete vector fields with degree $\geq 3$, the set of homogeneous
polynomial semi-complete vector fields of degree~$2$ admitting an irreducible component (not invariant by $\fol$ and)
contained in its singular set is not empty, even when the singular set of $X$ is supposed to have codimension at least~$2$.
Indeed, the homogenous vector field
\[
X = xz \frac{\partial}{\partial x} + (2yz + y^2) \frac{\partial}{\partial y} + z^2 \frac{\partial}{\partial z}
\]
is a semi-complete vector field and induces a foliation on the
hyperplane at infinity admitting a non-invariant curve contained
in the singular set of the foliation associated to $X$. In fact, in the standard affine coordinates $(x,y,z)$
of Section~$3$, $\tXX$ is given by
\[
\tXX = \frac{1}{z} \left[ y(1+y) \frac{\partial}{\partial y} - z \frac{\partial}{\partial z} \right] \, .
\]

Although the set of homogeneous polynomial semi-complete vector fields of degree $d=2$ admitting a non-trivial irreducible
component (not invariant by $\fol$ and) contained in the singular set of $\fol$ is not empty, the corresponding vector fields
will be ruled out from our discussion. In fact, as far as the main results presented in the Introduction are concerned,
whenever the singular set of a foliation
plays a specific role, this singular set is supposed to consist only of
simple singularities (in the sense described in the Introduction). However, simple singularities in
this sense are not compatible with the presence of curves of singular points contained in the hyperplane of infinity for the
foliation in question. Alternatively, it should be noted that the foliations associated to the above non-empty set of homogeneous
polynomial vector fields can be easily described. In fact, we have:

\begin{lema}
Let $X$ be a homogeneous quadratic vector field and let $\textsc{P}^{\ast}$ be as above. If $\textsc{P}^{\ast}$
is not invertible, then the foliation $\tilf_{\infty}$ is induced by a vector field of degree~$0$ or~$1$.\qed
\end{lema}

After the preceding lemma, the case where $d=2$ and $\textsc{P}^{\ast}$ is not constant can directly be treated and details
will be left to the reader.

After this long detour, let us now go back to the real $1$-dimensional foliation $\mathcal{H}$. The rest of this section
is devoted to establishing Theorem~\ref{introducelabel4} which, in fact, makes no assumption on whether or not the
corresponding vector field $X$ is semi-complete. This theorem will further be extended in the next section and the final
result, when combined to the preceding material about semi-complete vector fields, will provide us with the required quantitative
information to prove the main results stated in the Introduction.

Consider then a homogeneous polynomial vector field $X$ of degree $d \geq 2$ that is not a multiple of the radial vector field.
Whether or not $X$ is semi-complete, we can consider the vector field $\tXX$ on $M$ along with its associated foliation
$\tilf$ and the induced foliation $\tilf_{\infty}$ on $\Delta_{\infty}$.
Next, fix a regular leaf $L_{\infty} \subseteq \Delta_{\infty}$
of $\tilf$ and let $S = \mathcal{P}^{-1}_{\infty}(L_{\infty})$ be the cone over $L_{\infty}$. Denote by $\mathcal{H}$ the
oriented $1$-dimensional real foliation induced by the Abelian form $\omega_1$ (cf.
Section~\ref{sechom}). It is also useful to consider other foliations similar to
$\mathcal{H}$. For this let us consider an angle $\theta \in (-\pi/2, \pi/2)$. Denote by
$\mathcal{H}^{\theta}$ the oriented foliation whose (oriented) trajectories make an
angle of $\theta$ with the (oriented) trajectories of $\mathcal{H}$. It is clear that
these foliations are well-defined under the same conditions that $\mathcal{H}$. It is
also clear that the holonomy maps of $\tilf_S$ obtained over the trajectories of
$\mathcal{H}^{\theta}$ are still contractions as in Formula~(\ref{contr}) (up to a multiplicative
constant). In the sequel we denote by $l^{\theta}$ an oriented trajectory of
$\mathcal{H}^{\theta}$.

Given (a segment of) a trajectory $l_p$ of $\mathcal{H}$ (resp. $l_p^{\theta}$ of
$\mathcal{H}^{\theta}$), we are interested in the value of the integral $\int_{l_p} dT$
(resp. $\int_{l_p^{\theta}} dT$).
To investigate the behavior of this integral, it is clear that the singularities of
$\tilf$ on $\Delta_{\infty}$ will pose further difficulties. Thus it is natural to begin with (segments
of) trajectories of $\mathcal{H}$ (resp. $\mathcal{H}^{\theta}$) that remain away from
the corresponding singular set. In order to do it, let $W$ be a sufficiently small open neighborhood
of the singular set of $\tilf$ on $\Delta_{\infty}$. Let $l_p$ (resp. $l_p^{\theta}$) be
(a segment of) a trajectory of $\calh$ (resp. $\calh^{\theta}$). We can now state one of our main results.
Despite our $3$-dimensional setting, the reader will immediately
check that this result holds in arbitrary dimensions (as it is always the case in the present section).

\begin{teo}
\label{introducelabel4}
Suppose that $l_p$ (resp. $l_p^{\theta}$) is contained in $\Delta_{\infty} \setminus W$. Then
$\int_{l_q} dT$ (resp. $\int_{l_q^{\theta}} dT$) converges for all $q = (p,z_0) \in \mathcal{P}^{-1}_{\infty}(p,0)$,
where $l_q$ (resp. $l_q^{\theta}$) denotes the lift of $l_p$ (resp. $l_p^{\theta}$) to the leaf of
$\tilf$ through $q$ and where $dT$ stands for the time form associated to $\tXX$. More precisely, assuming $W$ fixed,
and assuming that a trajectory $l_q^{\theta}$ of $\mathcal{H}^{\theta}$ does not intersect $W$,
there exists a constant $C$ (varying continuously with $\theta$) such that for every path $c :[0,1] \rightarrow L$,
$c(0) =q$, with image contained in $l_q^{\theta}$, we have
$$
\left|\int_c dT\right| \leq \int_c |dT| \leq \int_{l_q^{\theta}} |dT| < C \vert z_0 \vert^{d-1} \, ,
$$
where $d \geq 2$ stands for the degree of the initial homogeneous vector field $X$.
\end{teo}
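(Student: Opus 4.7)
The plan is to parametrize the lifted trajectory by the real coordinate $\tau = \int \omega_1$ and to write $dT$ in a form where the prefactor $z_0^{d-1}$ and the exponential contraction coming from $\calh$ appear explicitly. Using generic affine coordinates $(x,y,z)$ as in Section~3 we have $\tXX = z^{1-d}[F\partx + G\party + zH\partz]$, and on the leaf $L$ of $\tilf$ through $q = (p,z_0)$ the differential relation $dz/z = (H/F)\,dx = -\omega_1$ along the lifted leaf gives $z = z_0\exp(-\int\omega_1)$. Solving $dT\cdot\tXX = 1$ along $L$ yields
\[
dT \;=\; \frac{z^{d-1}}{F(x,y(x))}\,dx \;=\; -\frac{z^{d-1}}{H(x,y(x))}\,\omega_1.
\]

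Along an $\calh$-trajectory the form $\omega_1$ is real and positive, so setting $\tau = \int \omega_1$ one obtains $z(\tau)^{d-1} = z_0^{d-1}\,e^{-(d-1)\tau}$ and hence
\[
\int_{l_q} dT \;=\; -z_0^{d-1}\int_0^{\tau_{\max}}\frac{e^{-(d-1)\tau}}{H(\gamma(\tau))}\,d\tau,
\]
so the theorem reduces to producing a uniform majorant for $I(\gamma) = \int_0^{\infty} e^{-(d-1)\tau}/|H(\gamma(\tau))|\,d\tau$ as $\gamma$ varies among $\calh$-trajectories contained in $\Delta_\infty\setminus W$.

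This uniform estimate is the main obstacle, because the hypothesis $l_p\subset \Delta_\infty\setminus W$ rules out zeros of $F$ (which belong to the singular set of $\tilf$ and hence to $W$) but not zeros of $H$, which by Lemma~\ref{tiposing} merely produce saddle singularities of $\calh$ on $L_\infty$. I would proceed as follows. By Lemma~\ref{remainingcompact} together with the generic choice of affine coordinates, $\gamma$ eventually stays inside a fixed compact set $K\subset\C^3$, and $|F|$ admits a positive lower bound on $K\cap(\Delta_\infty\setminus W)$. Cover the finite set $\{H=0\}\cap K\cap\Delta_\infty$ by small polydiscs $W_1,\ldots,W_N$ on whose complement $|H|\geq h_0>0$; the contribution to $I(\gamma)$ from $\tau$'s with $\gamma(\tau)\notin\bigcup_j W_j$ is then trivially bounded by $1/(h_0(d-1))$. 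Inside each $W_j$, Lemma~\ref{tiposing} furnishes a local normal form realizing $\omega_1$ up to a nonvanishing factor as $w^{m-1}\,dw$ on the leaf, so the change of variable $\zeta = w^m/m = \int\omega_1$ shows that (i) a single passage of $\gamma$ through $W_j$ consumes a bounded amount of $\tau$-time, and (ii) $\int_{W_j\cap\gamma} d\tau/|H|$ is bounded by a constant depending only on $W_j$. Since between two consecutive passages $\gamma$ must traverse $K\setminus\bigcup_j W_j$, the $\tau$-gap between successive passages is bounded below by a positive $\delta$, and the exponential factors $e^{-(d-1)\tau_j}$ at the $j$-th passage produce a geometric series, yielding the desired bound $I(\gamma)\leq C$.

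The extension to $\calh^{\theta}$ for $\theta\in(-\pi/2,\pi/2)$ is formal: along $l_p^{\theta}$ one has $\omega_1 = e^{-i\theta}\,d\tau_\theta$ with $\tau_\theta\in\R_{\geq 0}$, whence $|z(\tau_\theta)|^{d-1} = |z_0|^{d-1}\,e^{-(d-1)(\cos\theta)\,\tau_\theta}$, and the identical argument provides a constant $C_\theta$ proportional to $1/\cos\theta$, manifestly continuous in $\theta$. The monotonicity statement $\int_c dT \leq \int_{l_q^{\theta}} dT$ is then recovered at the level of the $\tau$-integral, any sub-path $c$ of $l_q^{\theta}$ corresponding to a sub-interval of $[0,\tau_{\max}]$ over which the integrand $-z_0^{d-1}e^{-(d-1)\tau}/H(\gamma(\tau))$ has constant argument (and hence the partial integral has modulus monotone in the endpoint).
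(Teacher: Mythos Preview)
Your parametrization by $\tau=\int\omega_1$ is an attractive alternative to the paper's arc--length approach, and the identity $dT=-z^{d-1}\omega_1/H$ together with $z=z_0e^{-\tau}$ is correct. But the switch from $F$ to $H$ in the denominator is exactly where the two arguments diverge, and it costs you something real.

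The paper writes $|dT|\le |z|^{d-1}|dx|/|F|$ and observes that the zeros of $F$ (being singularities of $\tilf_{\infty}$) lie in $W$, so $|F|\ge\beta>0$ on $\Delta_\infty\setminus W$ for free. The only place where zeros of $H$ enter is in the lower bound for $|\omega_1|$ needed to get exponential decay of $|z|$ in arc--length; there the paper simply invokes Remark~\ref{rem} and deforms the trajectory locally to avoid the saddles of $\calh$, after which $|\omega_1|\ge\alpha$ on the compact part~$K$. That deformation is cheap because it only perturbs an exponential rate.

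In your formulation the zeros of $H$ sit directly in the integrand $1/|H|$, so they must be controlled explicitly. The line ``cover the finite set $\{H=0\}\cap K\cap\Delta_\infty$ by small polydiscs $W_1,\ldots,W_N$'' is where the argument breaks: $\{H=0\}\cap K\cap\Delta_\infty$ is a compact \emph{curve} in $\Delta_\infty$, not a finite set, and a single trajectory $l_p$ may cross it infinitely many times (the leaf $L_\infty$ need not be algebraic). Your subsequent claim that the $\tau$--gap between successive passages is bounded below by a fixed $\delta>0$ therefore has no foundation as written: you would need uniform transversality estimates along the whole curve $\{H=0\}$, together with a lower bound on the arc--length between consecutive intersections of a leaf with $\{H=0\}$, neither of which is supplied. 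The fix---working in a tubular neighbourhood of $\{H=0\}$ with uniform local normal forms and then recombining via $d\tau/|H|=|dx|/|F|$---essentially reconstructs the paper's estimate through the back door.

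A smaller point: the final sentence is not right. The integrand $-z_0^{d-1}e^{-(d-1)\tau}/H(\gamma(\tau))$ does \emph{not} have constant argument, since $\arg H$ varies along the leaf. The inequality $\int_c dT\le\int_{l_q^\theta}dT$ in the statement is meant (as the paper's own estimate makes clear) at the level of $\int|dT|$, where the monotonicity is trivial; no argument about phases is needed or available.
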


\begin{proof}
It suffices to prove the statement for the case of a trajectory $l_p$ of $\calh$ since the adaptations
needed for trajectories of $\calh^{\theta}$ are clear. Also, we can suppose without loss of generality
that the length of $l_p$ is infinite. Finally, we recall that the affine coordinates $(x,y)$ are as in Section~3, namely
$\Delta_{\infty}^{(x,y)}$ is not invariant by $\tilf_{\infty}$
and $\Delta_{\infty}^{(x,y)}$ contains no singularities of $\tilf_{\infty}$.

Let $W$ be the previously chosen open neighborhood of the intersection of $\Delta_{\infty} \setminus W$
with the singular set of $\tilf$. Assume that $l_p$ is connected and totally
contained in $\Delta_{\infty} \setminus W$. Since the intersection of $\Delta_{\infty} \setminus W$ with the
singular set of $\tilf$ is empty and the length of $l_p$ is infinite, the only singularities of $\mathcal{H}$
that may be met by the trajectory $l_p$ are saddle singularities of $\calh$ (occurring at regular points of $\tilf$).
However, according to Definition~\ref{justadddefinition2}, the corresponding trajectories of
$\mathcal{H}$ are continued by following separatrices that are ``emanated'' from
the saddle in question. Moreover, with this global definition of $\calh$-trajectory, the uniform contractive character
of the corresponding holonomy maps is still verified.

Naturally an oriented trajectory of $\calh$ cannot intersect the polar divisor of $\omega_1$ since the latter
is constituted by ``source-like'' singularities for $\mathcal{H}$. Away from $W$, this polar divisor coincides with
the ``line at infinity $\Delta_{\infty}^{(x,y)} \subset \Delta_{\infty}$. Though a trajectory of $\calh$ may come ``close''
to $\Delta_{\infty}^{(x,y)}$, owing to
Lemma~\ref{remainingcompact} we know that every sufficiently long segment of $l_p$ has ``most of its length''
contained in a fixed compact part of the affine $\C^2$ associated to the coordinates $(x,y)$. Let then a compact part $K$ of
the mentioned affine copy of $\C^2$ be fixed. Since $F$ is clearly bounded
on $K$, the estimates of Lemma~\ref{remainingcompact}
allow us to conclude the following: every sufficiently long segment $c_p$ of $l_p$
can be split into a concatenation $c_1 \ast c_2 \ast \cdots \ast c_k$ such that:
\begin{enumerate}
\item The image of $c_i$, for $i$ odd, is contained in the compact set $K$. Besides at points belonging to these segments
the absolute value of $\omega_1$ is bounded from below, i.e. $|\omega_1| \geq \da > 0$.

\item If $i_0$ is odd, then the sum of the lengths of all even $i$'s, $i <i_0$, is less than, say,
$2/3$ the sum of the lengths of $c_1, \ldots ,c_{i_0}$.

\item The absolute value of the coordinate ``$z$'' decreases monotonically over the segment $c_p$.

\end{enumerate}

Fix $q \in \mathcal{P}^{-1}_{\infty}(p)$ and let $L$ be the leaf
through $q$. Consider the lift of $l_p$ to $L$ and denote it by
$l_q$. Note that $l_q$ is an oriented trajectory of $\mathcal{H}$
over $L$. We want to express $l_q$ in the corresponding affine coordinates $(x,y,z)$. More precisely, our goal will be
to compute the value of its last coordinate $z$. For this, consider a
connected oriented path $c$ contained in $l_p$ and joining $p$ to
another point of $l_p$. Consider also a lift of $c$ contained in
$l_q$. The $z$-coordinate of the mentioned lift is given by
$z = z_0 \exp[-\int_c \omega_1]$
where $z_0$ is the $z$-coordinate of $q$. In other words, $z_0$ is
the ``height" of $q$ relatively to $L_{\infty}$. In particular
\begin{eqnarray*}
|z| &=& \left|z_0 e^{-\int_c \omega_1}\right| = |z_0| e^{-{\rm Re}
\int_c \omega_1} = |z_0| e^{-\int_0^1 {\rm Re} (\omega_1(c(t)).
c^{\prime}(t)) dt}\\
&=& |z_0| e^{-\int_0^1 |(\omega_1(c(t)). c^{\prime}(t)| dt} \leq
|z_0| e^{-\int_0^1 \da |c^{\prime}(t)| dt /3} = |z_0| e^{-\da {\rm
length}(c) /3}
\end{eqnarray*}
This estimate shows that, whenever a segment of $l_p$
having length equal to $3\ln(2)/ 2\da$ is lifted in a regular leaf of $\tilf$
projecting over $L_{\infty}$, the height of the final point of the lift in
question is at most $1/2$ of the height of its initial point.

Now the integral $\int_{l_q} dT$ can be estimated as follows. The
time-form on $L$ is given, in local coordinates, by
$dT = z^{d-1}dx/F(x,y)$.
Since $l_p$, the projection of $l_q$ by $\mathcal{P}_{\infty}$, is
contained on a compact set not intersecting the singular set of
$\tilf_{\infty}$, the absolute value of $F(x,y)$ is bounded from
below, i.e. $|F(x,y)| \geq \db > 0, \qquad {\rm for \;\; \; all } \; \; \; (x, y)
\in \Delta_{\infty} \setminus W$.
Otherwise we replace $F$ by $G$ (recall that we are dealing only
with regular points of $\tilf$ on $\Delta_{\infty}$). Hence, considering $l_q$
as the concatenation of segments having length equal to $3\ln(2)
/2 \da$, $l_q = \sum_{i=0}^{\infty}l_{i,q}$, it follows that
\begin{eqnarray*}
\left| \int_{l_q} dT \right| &=& \left| \sum_{i=0}^{\infty} \int_{l_{i,q}} \frac{z^{d-1}}{F(x, y)} dx \right|
\leq  \sum_{i=0}^{\infty} \left| \int_0^1 \frac{z_{i,q}^{d-1}(t)}{F(x_{i,q}(t),y_{i,q}(t))} x^{\prime}_{i,q}(t) dt \right|\\
&\leq&  \sum_{i=0}^{\infty} \int_0^1 \frac{|z_{i,q}(t)|^{d-1}}{|F(x_{i,q}(t),y_{i,q}(t))|} |x^{\prime}_{i,q}(t)| dt
\leq  \sum_{i=0}^{\infty} \int_0^1 \frac{|z_0|^{d-1}(\frac{1}{2})^{i(d-1)}}{\db} |l^{\prime}_{i,p}(t)| dt\\
&\leq&  \frac{|z_0|^{d-1}}{\db} {\rm length }(l_{i,p}) \sum_{i=0}^{\infty}\left( \frac{1}{2^{d-1}} \right)^{i}
=  \frac{3|z_0|^{d-1} {\rm ln}(2)}{2\da \db} \frac{1}{1 - (\frac{1}{2})^{d-1}} < \infty
\end{eqnarray*}
where $l_{i,q}(t) = (x_{i,q}(t), y_{i,q}(t), z_{i,q}(t))$, $t \in
[0,1]$, is such that $l_q = \sum_{i=0}^{\infty} l_{i,q}$ and
$\mathcal{P}_{\infty}(l_{i,q}) = l_{i,p}$. The theorem follows.
\end{proof}

What precedes shows that the above mentioned integral is, indeed, bounded on $\Delta_{\infty} \setminus W$.
Our next goal will be to get rid of the condition on $W$, i.e. we want to allow the trajectory $l_p$ (resp.
$l_p^{\theta}$) to accumulate on the singular set of $\tilf$ in $\Delta_{\infty}$. This will lead us to study
the behavior of this integral over segments of trajectories of $\calh$ (resp. $\calh^{\theta}$) that are close
to the singularities of $\tilf$. This local analysis will be the object of the Section~5. Nonetheless, to finish
the current section, let us give an elementary general result concerning trajectories of $\calh, \, \calh^{\theta}$
that are contained in a local separatrix for a singularity of $\tilf, \, \tilf_{\infty}$ in the particular case where
{\it $\tilf$ is associated to a semi-complete vector field $\tXX$}\, verifying also the preceding conditions. This goes as follows.

Consider again a vector field $\tXX$ as in Equation~(\ref{campo_forma_usual}). Let $p \in \Delta_{\infty}$
be a singular point of $\tilf$ and consider a (germ of) analytic curve $Sep \subset \Delta_{\infty}$ passing
through $p$, invariant by $\tilf_{\infty}$ and not entirely contained in the singular set of $\tilf$. Besides, let $\gamma
(t)$ denote a local, irreducible, Puiseaux parametrization for $Sep$ defined on a neighborhood of $0 \in \C$. Denote by
$f(t) \frac{\partial}{\partial t}$ the pull-back of the restriction of the vector field $F(x,y) \partial /\partial x
+ G (x,y) \partial /\partial y$ to $Sep$ by $\gamma$. Denote also by $h = h(t)$ the function $t \mapsto H
\circ \gamma (t)$. Then, the pull-back of the restriction of $\tXX$ to the cone over $Sep$ is given by
\[
\tXX_S = z^{1-d} \left[ f(t) \frac{\partial}{\partial t} + zh(t) \frac{\partial}{\partial z} \right] \, .
\]
Denote by $k$ (resp. $l$) the order of $f$ (resp. $h$) at $0 \in \C$. Now we have:

\begin{lema}
\label{separatrixndimensions}
Assume that $\tXX$, as in Equation~(\ref{campo_forma_usual}), is semi-complete and consider
the vector field $\tXX_S$ along with integers $k,l$ as above. Then $l \geq k-1$ and the nature of $\omega_1$ (restricted
to $Sep$) at $p$ is determined by the relation between $k$ and $l$. More precisely, the following holds.
\begin{itemize}
\item If $l > k$ then $\omega_1$ is holomorphic and the restriction of $\calh$ to $Sep$ has a saddle singularity at $p$
with $2m$ separatrices (for a certain $m \geq 1$).

\item If $l = k$ then $\omega_1$ is regular at $p$ (and, in particular, holomorphic).

\item If $l=k-1$ then $\omega_1$ has a simple pole at $p$. The residue of this pole is equal to $\da = -(h/f^{\prime})(0)$.
Then the restriction of $\calh$ to $Sep$ has a sink (resp. source) at $p$ provided that $\da \in \R_+$ (resp. $\da \in \R_-$).

\end{itemize}
\end{lema}

As explained in \cite{guillotreb}, cf. also the beginning of the present section, the vector field $\tXX_S$ induces an affine
structure on $\{ z=0\}$. According to \cite{guillotreb}, Section~3, this affine structure can be compared to the standard
euclidean structure to yield a $1$-form $\beta$ called the {\it affine defect}\, of the mentioned affine structure. In the
present case, the $1$-form $\beta$ is simply
\begin{equation}
\beta =  \left( \frac{-f'}{f} + (d-1) \frac{h}{f} \right) \, dt \, . \label{formbeta}
\end{equation}

\begin{proof}[Proof of Lemma~\ref{separatrixndimensions}]
The argument given here relies on part of the theory developed in \cite{guillotreb}. Keep the preceding notations
and suppose that $X$ is semi-complete. Since $X$ is semi-complete, the affine
structure induced by $X$ on $\{ z=0\}$ is {\it uniformizable}, cf. \cite{guillotreb}. In turn, according to Proposition~6
of \cite{guillotreb}, the uniformizable character of the affine structure in question ensures that $\beta$ has at
most a simple pole at $t=0$. Furthermore the residue associated to this simple pole has the form $-1+1/n$ where
$n \in \Z^{\ast} \cup \{ \infty \}$. We also point out here that, when $n \in \Z^{\ast}$, then the ``Fundamental Lemma''
proven in \cite{guillotreb} ensures that the local holonomy map associated to the invariant axis $\{ z=0\}$ has finite
order dividing~$n$.

Now assume that $\beta$ has only simple poles and note that the poles of $f'/f$ are necessarily simple as well.
In this case, we conclude that also $h/f$ can have at worst simple poles. In other words, we have proved that
$l \geq k-1$ provided that $X$ is semi-complete.

Assume now that $l=k-1$ and denote by $\alpha \neq 0$ the residue of the (simple) pole of $h/f$ at $t=0$. It was
seen that the residue of $\beta$ at $t=0$, if not zero, has the form $-1+ 1/n$ and from this it follows that
$\alpha =1/n$. In particular $\alpha$ must be real provided that $X$ is semi-complete. On the other hand,
$\alpha$ is also the residue of $\omega_1$ at $t=0$. In particular, if $\alpha  >0$ (resp. $\alpha <0$) then
$\calh$ has a sink (resp. source) at $t=0$.

To complete the proof of the lemma, let us suppose that $l\geq k$. It follows at once from the definition of
$\omega_1$ that this form is holomorphic and non-zero at $t=0$ provided that $l=k$. Similarly, if $l >k$, then
$\omega_1$ is still holomorphic at $t=0$ however, in this case, $t=0$ constitutes a {\it zero}\, of $\omega_1$.
The corresponding consequences for the local behavior of $\calh$ having already been known, the proof of the
lemma is completed.
\end{proof}

\begin{obs}
{\rm In the preceding argument, it should be emphasized that the quotient $h/f^{\prime}$ at $0$ only must
belong to $\R$ (in fact, to $\Z^{\ast}$) because $X$ is supposed to be semi-complete. Indeed, the residue of a simple
pole for the $1$-form $\beta$ need not be $-1 +1/n$, with $n \in \Z^{\ast} \cup \{ \infty\}$ unless the affine
structure giving rise to $\beta$ is uniformizable.

If the assumption of having a semi-complete vector field $X$ is dropped, then the foliations $\calh, \, \calh^{\theta}$
may also admit singular points that behave as ``centers'' at the points corresponding to $t=z=0$ in the previous local coordinates
$(t,z)$. This would add to the list of sink, source and saddle singularities.}
\end{obs}

In closing let us point out again that the preceding statements hold in arbitrary dimensions despite the fact that we have
chosen to emphasize the $3$-dimensional case. Details are left to the reader.

\section{Structure of $\mathcal{H}$ near singular points of $\tilf_{\infty}$}

This section is devoted to establishing an extension of Theorem~\ref{introducelabel4} allowing the trajectories
of $\calh, \, \calh^{\theta}$ to accumulate on singularities of $\tilf, \, \tilf_{\infty}$. These singularities, however, will be
supposed to have a sort of ``simple behavior''. Here, it should be noted that the assumptions made on the structure
of the singularities in question are not superfluous since certain ``saddle-node'' singularities of nature
different from those considered in the Introduction
give rise to new complications preventing us from generalizing Theorem~\ref{introducelabel4} without further information.
On the other hand, the main result of this section, namely Theorem~\ref{maintheo} below, remains valid under a large
class of singular points, cf. the comments at the end of the section.
As in Sections~3 and~4, we still keep notations emphasizing the $3$-dimensional case. The extensions
of the arguments to higher dimensions, however, does not pose further difficulties.

Throughout this section we shall deal with a homogeneous semi-complete vector field $X$ on $\C^3$ which, in addition, is supposed to have
a singular set of codimension at least~$2$. Besides the degree~$d$ of $X$ is supposed to verify $d \geq 2$.

Let us consider again the foliation $\tilf$ associated to a homogeneous (polynomial) semi-complete vector field $X$ on
$\C^3$ and assume that the singularities of $\tilf$ lying in $\Delta_{\infty}$ are simple in the sense stated in the
Introduction. Recall that the foliation $\tilf$ is tangent to the vector field $\tXX$ obtained from the initial vector field~$X$
and given in the affine coordinates $(x,y,z)$ of Section~3 by Equation~(\ref{campo_forma_usual}). Namely, we have
\begin{equation}
\tXX = \frac{1}{z^{d-1}} \left[ F(x,y) \frac{\partial}{\partial x} + G(x,y) \frac{\partial}{\partial y} +
z H(x,y) \frac{\partial}{\partial z} \right] \, . \label{xyzglobalcoordinates1}
\end{equation}
Suppose that $p \in \Delta_{\infty}$ is a singular point of $\tilf$ and consider local coordinates $(u,v,w)$ about $p$, with
$w$ locally equal to~$z$. In these coordinates, a local representative for the foliation $\tilf$ is provided by
the vector field $Y$ having the form
\begin{equation}
Y = \overline{F} (u,v) \frac{\partial}{\partial u} + \overline{G} (u,v) \frac{\partial}{\partial v} +
w \overline{H} (u,v) \frac{\partial}{\partial w} \label{uvwlocalcoordinates1}
\end{equation}
for certain holomorphic functions $\overline{F}, \overline{G}$ and $\overline{H}$ without non-trivial common factors,
where $p$ is identified to the origin of $\C^3$. The singular point $p$ is then
said to be {\it simple}\, (in the sense described in the Introduction) if the linear part
of $Z = \overline{F} \partial /\partial u + \overline{G} \partial /\partial v$ at $(0,0) \in \C^2$ possesses two
eigenvalues different from zero. Moreover, in case the quotient of these eigenvalues happens to be a positive integer,
then the induced foliation on $\Delta_{\infty}$
is linearizable (in other words, it is not conjugate to its Poincar\'e-Dulac normal form, cf. for example \cite{arnold}).
With these assumptions, Theorem~\ref{introducelabel4} admits the following extension:

\begin{teo}\label{maintheo}
Let $X$ be a homogenous polynomial semi-complete vector field whose singular set has codimension~$\geq 2$. Assume
that all the singularities of $\tilf$ are simple (in the sense indicated in the Introduction). Suppose that there
is $\theta \in (-\pi/2, \pi/2)$ and a point $\textsc{P}\in \Delta_{\infty}$ such that the trajectory $l_{\textsc{P}}^{\theta}$
of $\mathcal{H}^{\theta}$ through $\textsc{P}$ has infinite length. Then $\int_{l_q} dT$ converges for all $q \in
\mathcal{P}^{-1}_{\infty}(\textsc{P})$, where $l_q$ denotes the lift of $l_{\textsc{P}}$ to the leaf through $q$ and $dT$ is
the time form associated to $\tXX$.
\end{teo}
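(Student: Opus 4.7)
The plan is to reduce Theorem~\ref{maintheo} to a purely local estimate near each simple singularity of $\tilf_\infty$, and then combine that local estimate with Theorem~\ref{introducelabel4} outside a small neighborhood of the singular set. Let $\{q_1,\dots,q_s\}$ be the singularities of $\tilf_{\infty}$ accumulated by $l_p^{\theta}$ and fix small neighborhoods $W_i$ of each $q_i$. Since $l_p^{\theta}$ has infinite length but the $W_i$ are small, it decomposes as an (at most countable) alternating concatenation of ``external'' segments lying in $\Delta_{\infty}\setminus W$ (with $W=\bigcup W_i$) and ``local passes'' crossing some $W_i$. For each external segment the argument of Theorem~\ref{introducelabel4} applies verbatim and bounds the contribution to $\int_{l_q}dT$ by a constant multiple of $|z|^{d-1}$, where $|z|$ is the height of the lift at the entry point of the segment. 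The contribution of the external segments also causes $|z|$ to decrease by at least a fixed geometric factor at each passage through a fixed compact part of $\Delta_\infty$, exactly as in the proof of Theorem~\ref{introducelabel4}.

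The key new task is to control the contribution of a single local pass across $W_i$. Here I would invoke a local normal form. In the non-degenerate, non-Poincar\'e--Dulac case the foliation $\tilf_{\infty}$ is linearizable at $q_i$, so in appropriate coordinates $(u,v)$ on $\Delta_\infty$ the vector field induced on $\Delta_\infty$ is $\lambda u\,\partial_u+\mu v\,\partial_v$ with $\lambda,\mu\ne 0$, and the ambient $\tXX$ is (up to the factors $\textsc{P}$ and $z^{1-d}$) $\lambda u\,\partial_u+\mu v\,\partial_v+\nu z\,\partial_z$ with $\nu=H(q_i)$. On the cone over a nearby leaf the form $\omega_1$ is explicitly $(\nu/\lambda)\,du/u$ plus holomorphic terms, so a trajectory of $\calh^\theta$ entering $W_i$ is a logarithmic spiral (or a radial ray) on the leaf, and direct integration shows that one entry--exit pass contributes at most $C_i\,|z_{\rm in}|^{d-1}$ to $\int_{l_q}dT$, the constant varying continuously with $\theta$. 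The exclusion of the Poincar\'e--Dulac case guarantees that the trajectory can never be trapped in a resonant flower of infinitely many equal-size loops. For a codimension~$1$ saddle-node one uses the Hukuhara--Kimura--Matuda normal form: along $\Delta_\infty$ the singularity is again linear and non-Poincar\'e--Dulac, while the transverse direction has eigenvalue zero and contributes only a bounded sectorial correction to the same estimate $C_i\,|z_{\rm in}|^{d-1}$.

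Step~3 is to put the pieces together. Label the successive segments (external or local passes) by $j=0,1,2,\dots$ and let $|z_j|$ be the height of the lift at the entry of the $j$-th segment. From the contractivity property~(\ref{contr}) applied on external segments of bounded length, together with the local estimates of Step~2 ensuring that $|z|$ cannot grow inside a $W_i$, one obtains $|z_{j+N}|\le \tfrac{1}{2}|z_j|$ for some fixed $N$ independent of $j$ (the $N$-step geometric decay of the height). Summing the per-segment bounds therefore gives
\begin{equation*}
\int_{l_q}dT \;\le\; \sum_{j\ge 0} C\,|z_j|^{d-1} \;\le\; C'\,|z_0|^{d-1}\sum_{k\ge 0}\bigl(\tfrac{1}{2}\bigr)^{k(d-1)} \;<\;\infty,
\end{equation*}
which is the desired convergence. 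The constants depend continuously on $\theta\in(-\pi/2,\pi/2)$, mirroring the corresponding dependence in Theorem~\ref{introducelabel4}.

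The main obstacle is the local analysis in Step~2, and specifically the saddle-node case. Along the central (zero-eigenvalue) direction the leaves of $\tilf$ approach $\Delta_\infty$ very slowly, and a trajectory of $\calh^\theta$ that enters $W_i$ tangentially to the weak separatrix might, \emph{a priori}, stay inside $W_i$ for an arbitrarily long time. One has to combine the sectorial normal form with the monotone decrease of $|z|$ and the non-Poincar\'e--Dulac hypothesis on the induced $(n-1)$-dimensional singularity in $\Delta_\infty$ to show that such a trajectory still leaves $W_i$ in bounded ``$\omega_1$-length'' and with the required $O(|z_{\rm in}|^{d-1})$ bound on its time-integral. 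This is where the ``simple'' hypothesis is fully used and where a less benign singularity (e.g.\ a slightly more degenerate saddle-node) would already make the proof fail, consistent with the remark just preceding the theorem.
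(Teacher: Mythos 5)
Your overall strategy is exactly the one the paper follows: external segments handled by Theorem~\ref{introducelabel4}, a per-pass bound of the form $C\,|z_{\rm in}|^{d-1}$ near each singularity (this is Proposition~\ref{siegeldomain} and Remark~\ref{explainingstatement} in the paper), and a summation over passes using the geometric decay of the height coming from the contractive holonomy. The paper's own proof of Theorem~\ref{maintheo} is literally ``combine Theorem~\ref{introducelabel4} with Proposition~\ref{siegeldomain}'', so your Steps~1 and~3 are faithful.

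The gap is in Step~2, which is where all the content of Section~5 actually sits. You assert that ``direct integration'' in a linearized model gives the per-pass bound, but you never use the semi-completeness of $X$ at the local level, and that hypothesis is what makes the local picture tractable. The paper first shows (Lemma~\ref{hreal}) that if the transverse eigenvalue $H(p_0)$ is nonzero then necessarily $d=2$ and $H(p_0)/\dl_1\in\Q$; hence the residues of $\omega_1$ on the separatrices and the ratio $\dl_1/\dl_2$ are \emph{real rational}. This is what produces the clean trichotomy --- $H(p_0)=0$: $\omega_1$ holomorphic, a mere saddle of $\calh$, no time estimate needed; $\dl_1/\dl_2\in\Q_+$: terminal sink/source, excluded by your infinite-length hypothesis; $\dl_1/\dl_2\in\Q_-$: the Siegel case --- and, in the Siegel case, it is what guarantees both the ``enter near one separatrix, exit near the other'' structure of a pass (so each connected component of $l\cap U_\de$ has bounded length) and the convergence of the per-pass integral, via the exponent $(d-1)(1-\delta)/\dl_1-1>-1$ in the estimate $|dT|\le C|z_0|^{d-1}x^{(d-1)(1-\delta)/\dl_1-1}\,|dx|$. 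Without the reality of $H(p_0)/\dl_1$ none of this is automatic, so your sentence ``direct integration shows that one entry--exit pass contributes at most $C_i|z_{\rm in}|^{d-1}$'' is precisely the statement that needs proof, not a consequence of linearization of $\tilf_\infty$ alone. Relatedly, your sectorial (Hukuhara--Kimura--Matuda) analysis of the saddle-node is unnecessary and somewhat misdirected: when $H(p_0)=0$ the form $\omega_1$ is holomorphic on the cone over each separatrix (Lemma~\ref{thislabelwasmissing}, via Lemma~\ref{separatrixndimensions}), so the singularity is at worst a saddle for $\calh$, there is no slow approach to $\Delta_\infty$ to control, and the trajectory can be continued or locally deformed as in Remark~\ref{rem}. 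The Poincar\'e--Dulac exclusion is not what prevents ``resonant flowers''; it only enters to guarantee linearizability in the dicritical case, which is irrelevant here because dicritical points are terminal and excluded by the infinite-length hypothesis.
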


This section is entirely devoted to the proof of Theorem~\ref{maintheo}. Applications of Theorem~\ref{maintheo},
along with Theorem~\ref{introducelabel4}, will be worked out in Sections~6 and~7.
Before proceeding further, let us first revisit the statement of Theorem~\ref{maintheo} to make its assumptions clear.

First, the singular set of $X$ has codimension $\geq 2$. This implies that $\overline{P}$ as in
Equation~(\ref{campo}) is invertible. It can therefore be assumed constant equal to~$1$. Thus, in the case
where $X$ has degree $d \geq 3$, $P^{\ast}$ is invertible as well (cf. Corollary~\ref{trivializingP}). The
same does not necessarily occur for homogeneous (polynomial) vector fields of degree~$d=2$. Nonetheless,
since we are assuming the singular points of $\tilf$ on $\Delta_{\infty}$ to be simple, $P^{\ast}$ can
also be supposed invertible (and therefore constant) even for $d=2$. So, from now on, the greatest common
divisor between $F$ and $G$ in Equation~(\ref{campo}) is assumed to be~$1$.

To begin with, fix a point $p \in \Delta_{\infty}$ contained in the singular set of $\tilf$. Recall that the
two eigenvalues of $\tilf_{\infty}$ at $p$ are supposed to be different from zero (and, in case they are of
the form $1,N$ with $N \in \mathbb{Z}_+$, $\tilf_{\infty}$ is supposed not to be conjugate to its Poincar\'e-Dulac
normal form). To be more precise, let $Y$ be the vector field in~(\ref{uvwlocalcoordinates1}) with singular set
of codimension at least~$2$ and tangent to $\tilf$. The conditions on the singularities of $\tilf, \, \tilf_{\infty}$
imply that ${\rm g.c.d.}\, (\overline{F},\overline{G}) = 1$. They also imply that the vector field
$Z = \overline{F} \partial /\partial u + \overline{G} \partial /\partial v$ has eigenvalues
$\dl_1, \, \dl_2$ at $(0,0) \simeq p$ with $\dl_1 \dl_2 \neq 0$. Furthermore if $\dl_1, \, \dl_2$ is of the form $1,N$ with
$N \in \mathbb{Z}_+$, then $Z$ is linearizable (recalling that a non-linearizable vector field verifying the
preceding conditions must be conjugate to $(Nx + y^N) \partial /\partial x + y \partial /\partial y$). This
summarizes the assumption of Theorem~\ref{maintheo}.

Comparing the expressions for the vector fields $\tXX$ and $Y$, respectively given in Formulas~(\ref{xyzglobalcoordinates1})
and~(\ref{uvwlocalcoordinates1}), it immediately follows that $H (p) = \overline{H} (0,0)$.

Now we state the following:

\begin{lema}
\label{thislabelwasmissing}
Fix a separatrix $Sep$ for $\tilf_{\infty}$ at a (simple) singular point $p_0 \in \Delta_{\infty}$.
Assume that $H (p) = \overline{H} (0,0) = 0$. Then the Abelian form $\omega_1$ on the cone over $Sep$ is holomorphic.
\end{lema}

\begin{proof}
Note that the above mentioned vector field $Z$ representing $\tilf_{\infty}$ about $p$ has, by assumption,
a linear part with two eigenvalues $\dl_1, \,  \dl_2$ different from {\it zero}. Suppose that $Sep$ is a (possibly
singular) irreducible local separatrix for $\tilf_{\infty}$ at $p$ and denote by $\gamma$ an irreducible
Puiseaux parametrization for $Sep$. Since $\dl_1 \dl_2 \neq 0$, it is immediate to check that the order $k$
at $0 \in \C$ of the one-dimensional vector field obtained by pulling-back the restriction of the
vector field $Z$ to $Sep$ by $\gamma$ equals~$1$. The statement
then results from Lemma~\ref{separatrixndimensions}.
\end{proof}

It follows from Lemma~\ref{thislabelwasmissing} that $p$ is either a regular point
or a saddle singularity for $\calh$ provided that $H(p) =0$. In view of the discussion at the end of
Section~3 (about the global definition of $\calh$-trajectories), singular points of saddle-type for $\calh$
do not yield endpoints for any trajectory
of $\calh$. Indeed, every trajectory of $\calh$ entering a small neighborhood of the singular point in question
will eventually leave this same neighborhood. Besides, as already shown, from a global point of view every trajectory
of $\calh$ gives rise to a contracting holonomy map in the appropriate sense.

Whereas Lemma~\ref{thislabelwasmissing} does not require the vector field $X$ to be semi-complete, this assumption
definitely plays a role in our next lemma concerning the case $H (p) = \overline{H} (0,0) \neq 0$. The reader will
also note that this lemma already appears in \cite{guillotFourier}.

\begin{lema}\label{hreal}
Suppose that the initial degree~$d$ homogeneous polynomial vector field $X$ is semi-complete. Suppose also that
$H (p) = \overline{H} (0,0) \neq 0$. Then $d=2$. Furthermore the ratios $\dl_1 /\overline{H} (0,0)$ and $\dl_2 /\overline{H} (0,0)$
are both integers numbers (and hence reals).
\end{lema}

\begin{proof}
Since the singular points of $\tilf$ in $\Delta_{\infty}$ are supposed to be simple, the foliation
$\tilf_{\infty}$ possesses at least two smooth separatrices through $(0,0) \simeq p$. Without loss of
generality, these separatrices may be supposed to coincide with the axes $u, \, v$. To prove that
$\dl_1 /\overline{H} (0,0) \in \Z^{\ast}$, consider the restriction of $\tilf$ to the $2$-plane sitting over
the separatrix $Sep = \{ v=w=0\}$ of $\tilf_{\infty}$. Clearly this $2$-plane is invariant by $\tilf$ and
locally parameterized by the coordinates $u, \, w$. The restriction $\tXX_{\vert}$ of $\tXX$ to the $2$-plane in
question expressed in $(u,  w)$-coordinates is given simply by $\tXX_{\vert} = w^{1-d} [\overline{f} (u)
\partial /\partial u + w \overline{h} (u) \partial /\partial w ] = w^{-1}  [\overline{f} (u)
\partial /\partial u + w \overline{h} (u) \partial /\partial w ]$, since $d=2$.

On the other hand, the vector field $\tXX_{\vert}$ is semi-complete on a neighborhood of the origin. Furthermore
$\overline{h} (0) = \overline{H} (0,0) \neq 0$ while $\overline{f} (u) = \dl_1 u + \cdots$. In particular
$\overline{f} (0) =0$. From this it follows that the axis $\{ u=0\}$ is invariant by $\tXX_{\vert}$ and that
the restriction of $\tXX_{\vert}$ to the mentioned axis is a regular one-dimensional vector field. This restriction
being regular and $\tXX_{\vert}$ being semi-complete, it follows that the local holonomy map associated to the axis in question
must coincide with the identity (cf. the discussion in the proof of Proposition~\ref{aproposition2}). Since
$\overline{f} (u) = \dl_1 u + \cdots$ an elementary calculation shows that the above mentioned holonomy
map cannot coincide with the identity unless $\dl_1 /\overline{H} (0,0)$ is an integer. The case of
$\dl_2 /\overline{H} (0,0)$ being analogous, the proof of the lemma is over.
\end{proof}

Summarizing, when $X$ is semi-complete, both quotients $\dl_1 /\overline{H}(0,0), \, \dl_2 /\overline{H} (0,0)$ are
non-zero integers. In particular, the quotient of the eigenvalues of $\tilf_{\infty}$ at the singular point $p_0$ is
a rational number since it is given by $\dl_1/\dl_2$. Because we are treating the case $\overline{H} (0,0) \neq 0$ where
the $1$-form $\omega_1$ has a simple pole at the origin ($\simeq p$), there follows the existence of two different
cases  according to whether $\dl_1/\dl_2 \in \Q_+$ or $\dl_1/\dl_2 \in \Q_-$. The first possibility cane easily
be treated.

\begin{lema}\label{quotientinR+}
Let $X$ be as in the statement of Lemma~\ref{hreal}.
With the preceding notations suppose that $\dl_1/\dl_2 \in \Q^+$.
Then $p$ is a sink $($resp. source$)$ singularity for $\calh$ provided
that $H(0,0)/\dl_1 > 0$ $($resp. $H(0,0)/\dl_1 < 0)$. In both cases,
$p$ yields an endpoint for the trajectories of $\calh$.
\end{lema}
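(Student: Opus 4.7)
The plan is to produce an explicit local normal form for $\tXX$ near $p_0$, compute $\omega_1$ on the nearby leaves of $\tilf_{\infty}$, and read off the structure of $\calh$ from the residue. Since $\dl_1/\dl_2 \in \Q_+$ places the vector field $Z = F\partial/\partial x + G\partial/\partial y$ in the Poincar\'e domain, and the general hypothesis of Theorem~\ref{maintheo} rules out Poincar\'e-Dulac normal forms, $Z$ is locally linearizable about $p_0$. Choose coordinates so that $F(x,y) = \dl_1 x$ and $G(x,y) = \dl_2 y$. The two smooth separatrices of $\tilf_{\infty}$ at $p_0$ are then the coordinate axes, and every other leaf is parameterized as $y = C x^{\dl_2/\dl_1}$ for some $C \in \C^{\ast}$.

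Using $x$ as local parameter on such a leaf, formula~(\ref{omega1}) yields
\[
\omega_1 \, = \, -\frac{H(x, C x^{\dl_2/\dl_1})}{\dl_1 x}\, dx \, .
\]
As $x \to 0$ this tends to $a\, dx/x$, where $a := -H(0,0)/\dl_1$. By Lemma~\ref{hreal} we have $a \in \Q^{\ast} \subset \R^{\ast}$, hence $\omega_1$ has a simple pole at $p_0$ with real residue $a$. Writing $a\, dx/x = a\, d\log |x| + i a\, d(\arg x)$, the trajectories of $\calh = \{{\rm Im}\, (\omega_1) = 0\}$ are locally the radial rays $\{\arg x = \mbox{const}\}$, and the orientation imposed by ${\rm Re}\, (\omega_1) > 0$ translates into $a\, d\log |x| > 0$. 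Thus trajectories move inward toward $p_0$ when $a < 0$ and outward when $a > 0$, i.e.\ $p_0$ is a sink when $H(0,0)/\dl_1 > 0$ and a source when $H(0,0)/\dl_1 < 0$. The same calculation on the other separatrix $\{x=0\}$ (using $y$ as parameter) yields the residue $-H(0,0)/\dl_2$, which has the same sign as $-H(0,0)/\dl_1$ because $\dl_1/\dl_2 \in \Q_+$, so the two local pictures are consistent.

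For the terminal-point claim, recall from Remark~\ref{rem} that sinks and sources of $\calh$ are precisely the types of singularities capable of furnishing genuine endpoints of a trajectory; the dichotomy above then suffices provided the trajectories in question actually accumulate on $p_0 \in \Delta_{\infty}$ in the ambient manifold. This is transparent in the linearization: on the separatrix $\{y=0\}$ the trajectory is a radial ray in $x$ and hence converges to $(0,0) = p_0$; on a generic leaf $y = Cx^{\dl_2/\dl_1}$ the trajectory is asymptotically radial in $x$, so $x \to 0$ and, since $\dl_2/\dl_1 \in \Q_+$, also $y \to 0$, whence the trajectory accumulates on $p_0$. The only point requiring some delicacy is the validity of the linearization in the resonant sub-case $\dl_2/\dl_1 \in \Z_+$, which is precisely where the exclusion of Poincar\'e-Dulac type in the hypothesis of Theorem~\ref{maintheo} is used; the rest of the proof is a direct local computation.
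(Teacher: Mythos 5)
Your proof is correct and follows essentially the same route as the paper's: read off the sign of the residue $-H(0,0)/\dl_i$ of $\omega_1$ along the separatrices, and use the fact that $\dl_1/\dl_2 \in \Q_+$ (Poincar\'e domain) forces every nearby leaf to accumulate on $p_0$ with the same radial behaviour, so that $p_0$ is a sink or source according to the sign of $H(0,0)/\dl_1$. The only difference is that you make explicit, via the linearization of $Z$, the step the paper states qualitatively (that the structure of $\calh$ on the remaining leaves "has to be of the same nature" as on the separatrices); this is a welcome clarification rather than a different argument.
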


\begin{proof} It suffices to consider the case $\dl_1 /\overline{H} (0,0) > 0$.
Clearly the structure of $\calh$ over the two (smooth) separatrices
of $\tilf_{\infty}$ at $p$ corresponds to sinks. As to the remaining
leaves, recall that they all accumulate on the origin. Furthermore the
structure of $\calh$ over regular points of these leaves has to be of
the same nature as the corresponding structure over the smooth
separatrices. That is to say that all these trajectories point inward
the singularity $p \simeq (0,0)$. The lemma is proved.
\end{proof}

The next step is to consider the case in which $\dl_1/\dl_2 \in \Q^-$. The restriction of
$\tilf_{\infty}$ to a neighborhood of $p$ admits exactly $2$~separatrices. These
separatrices are the unique leaves (of the restriction of $\tilf_{\infty}$
to a neighborhood of $p$) ``radially'' accumulating on the singular point
$p$. In vague terms, the separatrices are the only leaves of $\tilf_{\infty}$
accumulating on $p$ if we ignore the effect of the local holonomy of this
foliation. Denote by $Sep$ one of the separatrices. The restriction of $\mathcal{H}$ to
$Sep$ may have a singular point at $p \in Sep$. The nature of this singular
point depends also on the sign of the quotient $\dl_1 / \overline{H} (0,0)$. If $\dl_1 / \overline{H} (0,0) > 0$
then $p$ corresponds to a sink of $\mathcal{H}$ (or of $\omega_1$ by a small
abuse of notation) over $Sep$. Conversely, in the case where $\dl_1 /\overline{H} (0,0) < 0$,
the singular point corresponds to a source. We note, however, that $\dl_1 /\overline{H} (0,0)$
and $\dl_2 / \overline{H} (0,0)$ have opposite signs. This implies that if $p$ is a sink of
$\omega_1$ for one of the separatrices then $p$ is a source for the other.

The above indicated issue about source and sinks singularities appearing on the two separatrices
of a singularity $p$ as before deserves further comments. First, if we consider real trajectories of $\mathcal{H}$ in the
separatrix admitting $p$ as a sink, then these trajectories will reach a future endpoint at $p$. Somehow compensating
the existence of this future endpoint, in the other separatrix new $\calh$-trajectories are issued. These phenomena can
however occur for only finitely many leaves of our foliation since each separatrix of a singularity as above can give
rise to only one global leaf of $\tilf, \, \tilf_{\infty}$. In particular it will play no significant role in the proof
of any of the theorems stated in the Introduction. In this concern, a far more important observation concern those
$\calh$-trajectories whose projection on $\Delta_{\infty}$ enters a small neighborhood of $p$ but are not contained
in the corresponding separatrix of $p$. In fact, these trajectories can naturally be continued through the ``saddle''
associated to the singularity of $\tilf_{\infty}$ so as to eventually leave a fixed neighborhood of $p$. Indeed, the
foliation $\mathcal{H}$ is regular over all leaves of $\tilf_{\infty}$ different from the two separatrices
on a neighborhood of $p$. Besides, as we are going to see
next, the ``continued'' trajectory keeps the contractive character of its holonomy.

Suppose then that the eigenvalues $\dl_1, \dl_2$ at $p_0$ satisfy $\dl_1/\dl_2 \in \Q^-$.
Let us still assume that $\overline{H}(0,0) \ne 0$ so that it can be normalized to
be~$1$. Let $U_{\de} = \{(x,y,z): |x|, |y| \leq \de\}$ be a small neighborhood
of the origin $\simeq p_0$, not containing other singular points of $\tilf_{\infty}$.
Fix a regular leaf $L_{\infty} \subseteq \Delta_{\infty}$ (distinct from the separatrices)
intersecting $U_{\de}$ and consider a real trajectory $l \subseteq L_{\infty}$ for $\calh$.
For these singularities we have:

\begin{prop}
\label{siegeldomain}
Let $X$ be as in the statement of Lemma~\ref{hreal} and assume that $\dl_1/\dl_2 \in \R_-$. Let $U_{\de}$
be a small neighborhood of the (simple) singular point $p \simeq 0$ as above. Then the integral
$\int_{l_q \cap U_{\de}} dT$ is uniformly bounded for every $\textsc{P} \in l$ and $q \in \calp^{-1}_{\infty}(\textsc{P})$.
\end{prop}

\begin{obs}
\label{explainingstatement}
{\rm It should be emphasized that the trajectory $l_q$ in the statement is viewed as a
global trajectory of $\mathcal{H}$. In other words, the intersection $l_q \cap U$ possesses, in general, infinitely
many connected components. The proposition, indeed, claims that the sum of the integrals of $dT$
over all these connected components is uniformly bounded.}
\end{obs}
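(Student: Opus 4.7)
The statement in question is a \emph{remark} clarifying the meaning of Proposition~\ref{siegeldomain}, not a result requiring a separate proof. Accordingly, the ``proposal'' is simply to articulate, with reference to the conventions already set in Sections~3--5, the two interpretative points that the remark is meant to forestall. The first is that $l_q$ denotes a globally continued trajectory of $\calh$; the second is that the uniform bound is to be read as a bound on the total sum of contributions coming from each re-entry of $l_q$ into $U_{\de}$.

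First I would recall the definition of trajectory of $\calh$ given at the end of Section~3 and made precise in Remark~\ref{rem}: such a trajectory is not truncated at a saddle singularity but is either perturbed around it or concatenated through one of its repelling separatrices, retaining in either case the contractive character of the holonomy. Under the standing hypothesis $\dl_1/\dl_2 \in \Q^-$, the singular point $p_0$ is a saddle for $\tilf_{\infty}$, and the generic leaf $L_\infty$ of $\tilf_{\infty}$ distinct from the two local separatrices at $p_0$ meets $U_{\de}$ in countably many disjoint arcs; the trajectory $l$ on $L_\infty$, and hence its lift $l_q$, inherits this behavior. This is exactly what the remark means by ``$l_q \cap U$ possesses, in general, infinitely many connected components''.

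Next I would underline that Proposition~\ref{siegeldomain} asserts the uniform boundedness of the single quantity
\[
\int_{l_q \cap U} dT \; = \; \sum_{i} \int_{(l_q \cap U)_i} dT,
\]
where $(l_q \cap U)_i$ ranges over all connected components of $l_q \cap U$, rather than the evidently weaker statement that each summand is uniformly bounded. This stronger reading is the one used downstream: in the proof of Theorem~\ref{maintheo} one must add the local contributions near $p_0$ to the contributions of Theorem~\ref{introducelabel4} coming from the portion of $l_q$ lying outside a fixed neighborhood of the singular set, so individual bounds on each component would be of no use.

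Consequently nothing remains to be proved here; the substantive work is contained in Proposition~\ref{siegeldomain} itself (where the saddle geometry of $p_0$ and the rational nature of $\dl_1/\dl_2$ provided by Lemma~\ref{hreal} will be used to sum a geometric-type series controlling the total time spent on successive passages through $U_{\de}$). The only possible pitfall in writing the remark is notational consistency: one should make explicit that $U = U_{\de}$ from the paragraph preceding the proposition and that the uniformity is with respect to $p \in l$ and $q \in \calp_\infty^{-1}(p)$, with $z_0$-dependence allowed through the fixed ``height'' of $q$ as in Theorem~\ref{introducelabel4}.
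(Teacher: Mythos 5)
Your reading is correct: the statement is an expository remark with no proof in the paper, and your articulation — that $l_q$ is the globally continued trajectory of $\calh$ in the sense of Remark~\ref{rem}, that $l_q\cap U_{\de}$ may have infinitely many components from repeated re-entries near the saddle $p_0$, and that Proposition~\ref{siegeldomain} bounds the \emph{sum} of the contributions (via the geometric decay $|z_{i+1}|/|z_i|\leq k$ of the heights at successive entries) — matches exactly what the paper intends. Nothing further is required.
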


\noindent {\it Proof of Proposition~\ref{siegeldomain}}.
Let $X$ be as in the statement of Lemma~\ref{hreal}. Since $\overline{H} (0,0) \neq 0$, we can actually assume that
$\overline{H} (0,0) =1$. Also, it follows that the degree~$d$ of $X$ is exactly~$2$ (Lemma~\ref{hreal}). Nonetheless
to help the reader with the discussion conducted immediately after the end of the proof of Proposition~\ref{siegeldomain}
(cf. Appendix to Section~5), we shall denote this degree by~$d$ and only make the substitution $d=2$ at the very end
of the proof.

According to Lemma~\ref{hreal}, both eigenvalues $\dl_1$ and $\dl_2$ must be integers so that $\dl_1/\dl_2$ belongs
to $\Q$. By assumption, this quotient must, in fact, belong to $\Q_-$ i.e. $\dl_1$ and $\dl_2$ have opposite signs.
Next consider the foliation $\tilf_{\infty}$ induced on $\Delta_{\infty}$. Since $\dl_1/\dl_2 \in \Q_-$, the
corresponding singular point $p$ admits two (smooth) separatrices. In local coordinates
$(u,v,w)$ centered about $p$ as before, these separatrices can be identified with the axes
$\{ u=0\}$ and $\{ v=0\}$. Thus, if $(u,v,w)$ are suitably chosen, the vector field $\tXX$ takes the (local) form
\[
\tXX = w^{1-d} \left[ \overline{F} (u,v) \frac{\partial}{\partial u} + \overline{G} (u,v)
\frac{\partial}{\partial v} + w \overline{H} (u,v) \frac{\partial}{\partial w} \right]
\]
where $\overline{F} (u,v) = u(\dl_1 + {\rm h.o.t.})$, $\overline{G} (u,v) = v(\dl_2 + {\rm h.o.t.})$, $\dl_1/\dl_2
\in \Q_-$ and $\overline{H}(0,0) = 1$.

Assume, without loss of generality that $\dl_1 \in \R_+$ (resp. $\dl_2 \in \R_-$) and consider the restriction of
$\omega_1$ to the $u$-axis (resp. $v$-axis). The residue of $\omega_1$ at $0 \simeq p$ with
respect to the mentioned axis is equal to $-\overline{H}(0,0)/\dl_1$ (resp. $-\overline{H}(0,0)/\dl_2$). Therefore
the restriction of $\calh$ to the $u$-axis (resp. $v$-axis) possesses a sink
singularity (resp. source singularity) at $p \simeq 0$. Hence, the real trajectories
contained in the $u$-axis approaches $p$. Similarly, those trajectories contained in
the $v$-axis move away from $p$. It is easy to describe the behavior of $\calh$ on
the regular leaves of $U$ not accumulating at $p$: over a real trajectory of $\calh|_U$
the absolute value of $u$ decreases while the absolute value of $v$ increases. In other
words, a real trajectory moves away from the invariant plane $\{v=0\}$ while approaches
the plane $\{u=0\}$. In particular, whenever a (global) real trajectory $l$ enters the open
set $U_{\de}$ it necessarily leaves $U_{\de}$ as well.

The preceding discussion shows that the only possibility for a $\calh$-trajectory (not contained in the global leaves
arising from the axes $\{ v=w=0\}$ and $\{u=w=0\}$) to accumulate on the singular point $p$
happens when this trajectory enters infinitely many times the open set $U_{\de}$. The sequence
of points defined by the moment in which the mentioned trajectory enters $U_{\de}$ must also
contain a subsequence that converges for a point in the $u$-axis. Also, in this case, it is
immediate to check that the {\it length of each connected component of $l \cap U_{\de}$}\,
is bounded above by some uniform constant.

For each leaf of $\tilf \cap U_{\de}$ not contained in the invariant plane $\{u=0\}$, the time-form
is given by
\begin{equation}\label{timeform}
dT = \frac{w^{d-1}}{\overline{F}(u,v)}du \, .
\end{equation}
The leaf can locally be parameterized by the $u$-variable under the form $(u, v(u), w(u))$
where $w$ is given by Equation (\ref{omega1}). The expressions of $\overline{F}$ and $\overline{G}$ allow us to see
that $v(u) = v_0 (u/u_0)^{\dl_2/\dl_1}g(u)$ for some bounded holomorphic function $g$ on
$\C \setminus \R_-^{\ast}$ verifying $\lim_{u \rightarrow u_0}g(u) = 1$. In turn, the coordinate
$w$ is given by $w = w_0 e^{ -\int_{u_0}^u  \omega_1 }$, where $\omega_1$ coincides with
$-\overline{H}(u,v(u))/\overline{F}(u,v(u))du$. Therefore, substituting $v$ and $w$ in Equation~(\ref{timeform}),
we obtain
\begin{equation}
dT  = w_0^{d-1} \frac{1}{\overline{F}(u,v(u))} e^{ -(d-1) \int_{u_0}^u \omega_1
} \, du \, . \label{introducelabel1}
\end{equation}

Since we need to estimate the integral of the time-form over oriented real trajectories of
$\calh$, let us start by controlling the exponential term. Since $\overline{H}(0,0) = 1$, it follows that
$$
-\frac{\overline{H}(u,v)}{\overline{F}(u,v)} = -\frac{1}{\dl_1 u} \left( 1 + R(u,v)
\right)
$$
for some holomorphic function $R(u,V)$ on a neighborhood of the origin verifying $R(0,0) = 0$.
In particular, if $\varepsilon$ is sufficiently small, the absolute value of $R(u,v)$ is bounded
above by a small constant $0 < \delta << 1$ on $U_{\de}$. If $l$ is a trajectory of $\calh$ then
$\int_l \omega_1$ is a positive real number. Therefore
\[
\left| e^{-(d-1)\int_l \omega_1} \right| = e^{-(d-1) {\rm Re}\int_l
\omega_1} = e^{-(d-1)\int_l \omega_1}.
\]

Consider a (connected) segment of the real trajectory $l$ joining $u_0$ to $u$ where both points
are contained in the neighborhood in question. Denote by $\phi : [0, 1] \rightarrow L$ a
parametrization of this segment satisfying  $\phi(0) = u_0$ and $\phi(1) = u$. Up to a
change of coordinates, ``close" to a rotation, we can assume that the (connected) segment
$\phi ([0,1])$ is totally contained in the real axis. In fact, we can assume that it is
contained on its positive component. In particular, we can take $\phi(t) = u_0 + t(u - u_0)$.
It then follows
\begin{align*}
\int_l \omega_1 &= \int_0^1 \omega_1. \phi = \int_0^1 -\frac{\phi^{\prime}(t)}{\dl_1 \phi(t)} (1 + R(\phi(t),
v(\phi(t))) dt\\
&= \frac{1}{\dl_1} \int_0^1 -\frac{u - u_0}{u_0 + t(u - u_0)} (1 +
R(\phi(t), v(\phi(t))) dt\\
&\geq \frac{1 - \delta}{\dl_1} \int_0^1 -\frac{u - u_0}{u_0 + t(u -
u_0)}dt = \frac{1 - \delta}{\dl_1} \ln \left( \frac{u_0}{u} \right) \, .
\end{align*}
Therefore we obtain
\begin{equation}
\left| e^{ -(d-1) \int_l \omega_1 } \right| \leq C u^{\frac{(d-1)(1
- \delta)}{\dl_1}} \label{introducelabel2}
\end{equation}
where $C$ is a constant depending on $d$, $\dl_1$, $\delta$ and on $u_0 (= \de)$. In more
accurate terms, $C = \de^{\frac{(1-d)(1-\delta)}{\dl_1}}$. In fact,
this estimation should be multiplied by a constant
representing the supremum of the absolute value of the determinant
of the change of coordinates. However we can, basically, include
this quantity in $C$ since the absolute value of the determinant is
bounded above on $U_{\de}$. Actually, the value of the determinant in question is very close to~$1$
since the change of coordinates is ``close" to a rotation. In this
sense, the constant $C$ does not depend on the  segment of the real trajectory.

Now recall that $F(u,v) = \dl_1 u(1 + r(u,v))$, for some holomorphic
function $r$ on $U_{\de}$ satisfying $r(0,0)=0$. Modulo reducing
$\de$, we can assume that $\vert r(u,v) \vert$ is bounded above by a small
constant $0 < \tau << 1$. Therefore, the coefficient of the
time-form satisfies
\[
\vert dT \vert \leq \vert w_0 \vert^{d-1} \frac{C}{\dl_1 (1 - \tau)} u^{\frac{(d-1)(1 -
\delta)}{\dl_1} - 1} \, .
\]
Since the exponent of $u$ is greater than $-1$, the primitive of the
coefficient of the time-form, up to the term $u_0^{d-1}$, is a
bounded holomorphic function. It follows that the integral of the
time-form, up the same mentioned term $u_0^{d-1}$, over each connected component $l_i$ of $l
\cap U_{\de}$ is bounded above. In fact, there is a positive constant
$K$, not depending on the trajectory of $\calh$, such that
\[
\left| \int_{l_i} w_0^{1-d} dT \right|< K.
\]

Finally the integral of the time form along $l_i$ is now bounded by $K$
times the absolute value of a positive power of the variable $w$ in the moment that the
trajectory $l$ enters the open set $U_{\de}$ or, equivalently, on the
starting point of $l_i$. We denote by $w_i$ the value $w$ at the
starting point of $l_i$. As already mentioned, the holonomy of
$\tilf$ is contractive. Therefore, since the length of the real
trajectory between two consecutive starting points of $l \cap U_{\de}$ is
bounded from below, the sequence $w_i$ is such that $\vert w_{i+1} \vert /\vert w_i \vert
\leq k$, for some constant $0 < k < 1$, since the trajectories of $\mathcal{H}$ have contractive
holonomy. Thus
\[
\left| \int_{l \cap U_{\de}} dT \right| \leq \sum \left| \int_{l_i} dT
\right| \leq \sum K \vert w_i \vert^{d-1} \leq \sum K \vert w_0 \vert^{d-1}k^{i(d-1)} =
\frac{K \vert w_0 \vert^{d-1}}{1-k^{d-1}}
\]
Since $d =2$, the last estimate ensures that the corresponding integral is uniformly bounded as desired.\qed

Let us now provide the proof of Theorem~\ref{maintheo}.

\begin{proof}[Proof of Theorem~\ref{maintheo}]
The proof follows immediately from the combination of Theorem~\ref{introducelabel4} with
Proposition~\ref{siegeldomain}.
\end{proof}

\noindent {\bf $\bullet$ Appendix to Section~5: a natural relaxation of the condition imposed on the singularities
of $\tilf$}.

To close this section, we would like to indicate a much weaker assumption on the singularities of $\tilf$ that would
still be enough to yield Proposition~\ref{siegeldomain}, and hence Theorem~\ref{maintheo}. In fact, Proposition~\ref{siegeldomain}
can be extended to ``almost all'' of the class of singularities named ``absolutely isolated'', cf. \cite{canoetc}.

To explain how this generalization can be worked out, let us consider the vector field $\tXX$ given in local coordinates
$(u,v,w)$ about a singular point of $\tilf$ in $\Delta_{\infty}$ by
\begin{equation}\label{tXXxy}
\tXX u^n v^m w^{1-d} \left[ \overline{F}(u,v) \frac{\partial}{\partial u} + \overline{G}(u,v)
\frac{\partial }{\partial v} + w \overline{H}(u,v) \frac{\partial }{\partial w}
\right]
\end{equation}
where $d \geq 2$ and $n,\, m \in \Z$. Assume also that the singularity of the vector field
$\overline{F} (u,v) \partial /\partial u + \overline{G} (u,v) \partial /\partial v$ at $(0,0) \in \C^2$
is simple and that $\overline{H} (0,0) \neq 0$. As stated, Proposition~\ref{siegeldomain} no longer holds
for $\tXX$ as above. However, this proposition still holds for $\tXX$ as above under the additional
assumption that $\max \{ m,n\} \leq 0$, as it can straightforwardly be checked from the above given proof of the
proposition in question (this explains why we decided to make the substitution $d=2$ only at the end of
the mentioned proof).

The preceding observation is the key to adapt Theorem~\ref{maintheo} (and, by means of it, also Theorem~B) to a much
larger class of vector fields possessing ``absolutely isolated singularities'', as opposed to simple singularities,
in the hyperplane at infinity $\Delta_{\infty}$. This goes as follows. Denote again by $X$ a polynomial vector field
whose associated foliation $\tilf$ has only ``absolutely isolated singularities'' in $\Delta_{\infty}$. Suppose also
that the singular set of $X$ has codimension at least~$2$. According to the main result of \cite{canoetc}, these
singularities can be reduced by applying successive punctual blow-up maps to them. Furthermore, if the very generic
assumption that the absence of singularities of type ``saddle-node'' in the reduction procedure is added, then the final
(reduced) singularities will be of the type appearing in Formula~(\ref{tXXxy}). Besides, in the vast majority
of cases, the corresponding integers $m,n$ are non-positive. Thus, for the corresponding vector fields, Theorem~\ref{maintheo}
will still hold.

Summarizing what precedes, it should be said:
\begin{enumerate}
  \item Proposition~\ref{siegeldomain}, and hence Theorem~\ref{maintheo}, cannot be extended to {\it arbitrary singular
  points}\, without additional information on the global dynamics of the foliation $\tilf_{\infty}$ on $\Delta_{\infty}$.

  \item This proposition, however, can be extended to a vast class of singular points that is ``generic'' among singular points
  for any {\it a priori}\, fixed order. In particular, in the class of singular points for which
  Proposition~\ref{siegeldomain} (and Theorem~\ref{maintheo}) still holds, it can be found singularities ``concealing'' some
  very complicated dynamical behavior.
\end{enumerate}

\section{Applications to complete vector fields}

\subsection{Ends of solutions of complete polynomial vector fields on $\C^n$}

This first application concerns Theorem~\ref{introducelabel4}. Consider a complete polynomial vector field $X$ defined on
$\C^n$. Set $X=\sum_{i=0}^d X_i$ where $X_i$ stands for the homogeneous component
of degree~$i$ of $X$. To keep as much as possible the notations used in the
previous sections, the foliation associated to $X$ will be denoted by $\cald$
whereas $\fol$ will stand for the foliation associated to the top-degree homogeneous
component $X_d$. Throughout what follows, the degree~$d$ is supposed to be at least~$2$.

Recall that both foliations $\cald$ and $\fol$ admit holomorphic extensions to
$\C P(n)$ and these extensions are also denoted by $\cald$ and $\fol$.

\begin{lema}
\label{suzuki1.1}
The homogeneous vector field $X_d$ is not a multiple of the radial vector
field
$$
R = x_1 \partial /\partial x_1 + \cdots + x_n \partial /\partial x_n \; .
$$
\end{lema}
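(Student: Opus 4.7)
The plan is to argue by contradiction. Assume $X_d = p \cdot \cale$ for some non-zero homogeneous polynomial $p$ of degree $d-1$. I will exhibit a point $q' \in \C^n$ whose $X$-orbit leaves every compact subset of $\C^n$ at a finite complex time, contradicting the completeness hypothesis.

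The first step is to pass to an affine chart on $\CP(n)$ adapted to $\Delta_{\infty}$, for instance $u = 1/x_n$ and $y_i = x_i/x_n$ for $i<n$, so that $\Delta_{\infty}$ is locally cut out by $\{u = 0\}$. A direct calculation gives $\cale = -u\,\partial/\partial u$ in these coordinates. Since $p$ is homogeneous of degree $d-1$, one has $p = u^{-(d-1)}\tilde{p}(y)$ with $\tilde{p}(y) := p(y_1,\ldots,y_{n-1},1)$ a non-zero polynomial, so
\[
X_d = p\cdot\cale = -\frac{\tilde{p}(y)}{u^{d-2}}\,\frac{\partial}{\partial u}.
\]
The $\partial/\partial y_j$-components of $X_d$ cancel precisely because $\cale$ contributes no term tangent to $\Delta_{\infty}$ after the change of chart. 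An analogous routine bookkeeping applied to each lower-order component $X_i$, $i<d$, shows that its contribution to the $\partial/\partial u$-coefficient of $X$ has a pole of order at most $d-3$ along $\{u=0\}$, while its contribution to each $\partial/\partial y_j$-coefficient has a pole of order at most $d-2$. Consequently the renormalized vector field $u^{d-2}X$ is holomorphic in this chart and has value $-\tilde{p}(y)\,\partial/\partial u + \sum_j c_j(y)\,\partial/\partial y_j$ along $\{u=0\}$, where the $c_j$ are polynomials coming from $X_{d-1}$.

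The next step is to exploit this transversality. Pick $q_0 \in \{u=0\}$ with $\tilde{p}(q_0) \ne 0$, avoiding the finite set of singularities of $\cald$ on $\Delta_{\infty}$. At such $q_0$ the holomorphic vector field $u^{d-2}X$ is non-zero with non-trivial $\partial/\partial u$-component, hence transverse to $\Delta_{\infty}$. The flow-box theorem applied to $u^{d-2}X$ therefore produces a regular fibration of the neighborhood of $q_0$ by leaves of $\cald$ transverse to $\Delta_{\infty}$. For a nearby point $q' \in \C^n$ with $u(q') = u_0 \ne 0$ small, the leaf $L_{q'}$ can be parametrized as $u \mapsto (u, y(u))$ with $y(u_0)$ close to $q_0$, and the time form of $X$ restricted to $L_{q'}$ reads
\[
dT = -\frac{u^{d-2}}{\tilde{p}(y(u)) + O(u)}\,du,
\]
which extends holomorphically to $u=0$. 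Consequently $\int_{u_0}^{0} dT$ is a finite complex number $T^*$, and $\Phi(T,q')$ must leave every compact subset of $\C^n$ as $T \to T^*$. Since a holomorphic flow defined on all of $\C$ is locally bounded, this contradicts the completeness of $X$.

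The main obstacle I anticipate is the careful pole-order bookkeeping needed to conclude that $u^{d-2}X$ is holomorphic and transverse to $\Delta_{\infty}$ at a generic point. The crucial observation is that the hypothesis $X_d = p\cdot\cale$ kills the $\partial/\partial y_j$-component of $X_d$ in the affine chart, so that the highest $\partial/\partial u$-pole of $X$ arises only from $X_d$ and is not cancelled by anything from the lower-order terms; without this cancellation the leading behavior of $X$ near $\Delta_{\infty}$ would no longer be a purely transverse ``radial escape'' and the argument would collapse. Once this verification is in place, the remainder is routine: a holomorphic vector field transverse to a smooth divisor produces leaves that cross the divisor in finite complex time.
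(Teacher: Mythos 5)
Your argument is correct, and its core is the same as the paper's: if $X_d$ were a multiple of $\cale$, generic leaves of $\cald$ would cross $\Delta_{\infty}$ transversally and the flow of $X$ would reach $\Delta_{\infty}$ in finite complex time, contradicting completeness. The difference is organizational. The paper first observes that $X_d$, being the top-degree component of a complete field, is semi-complete on $\C^n$, and that $P\cale$ with $\deg P = d-1$ is semi-complete only when $d=2$ (restrict to a radial line where $P$ does not vanish to get $\dot r = P(v)r^d$); this reduces everything to $d=2$, where $X$ itself is already holomorphic and transverse to $\Delta_{\infty}$ at a generic point and the finite-time escape is immediate. You instead treat all $d\geq 2$ uniformly by passing to the renormalized field $u^{d-2}X$, checking via the pole-order bookkeeping that it is holomorphic and transverse to $\{u=0\}$ where $\tilde p\neq 0$, and then integrating the time form $dT=-u^{d-2}\,du/(\tilde p(y(u))+O(u))$, which is holomorphic at $u=0$, to get a finite escape time. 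Your bookkeeping is right (the key cancellation of the $\partial/\partial y_j$-components of $X_d$ in the chart is exactly what makes the top-order pole purely transverse, and the lower-order components contribute strictly milder poles), so the argument closes; what you lose relative to the paper is the extra piece of information that $d=2$ is forced, which the paper does not need here either, and what you gain is independence from the semi-completeness criterion for one-dimensional fields.
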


\begin{proof}
First note that the vector field $X_d$ is semi-complete on all of $\C^n$ since
it is the top-degree homogeneous component of a complete vector field. More precisely, let
$\Lambda_n$ denote the map $(x_1, \ldots ,x_n ) \mapsto (2^{n} x_1 , \ldots , 2^{n} x_n)$.
Next let $Y_n$ be the vector field defined by $2^{(1-d)n} . \Lambda_n^{\ast} X$. For every~$n$ fixed, $Y_n$
is clearly a complete vector field on $\C^n$ so that its restriction to every compact open set of
$\C^n$ is semi-complete. Besides the sequence $\{ Y_n \}$ converge uniformly on compact sets
to the vector field $X_d$. Since the set of semi-complete vector fields
is closed for the topology of uniform convergence on compact sets, it follows that $X_d$ is semi-complete
on all of $\C^n$.

Suppose now that $X_d$ is a multiple $fR$ of $R$. In view of Lemma~\ref{Adim3}, it follows that $f$
is a linear form, i.e. a homogeneous polynomial with degree~$1$. To obtain a contradiction with this last
possibility, we proceed as follows.
First, note that the generic leaf $L$ of $\cald$ intersects the hyperplane at infinity
of $\C P(n)$ transversely at a regular point $p$ for $\cald$. Besides the point $p$ is regular
for the restriction of $X$ to $L$. In other words, the flow of $X$ reaches the hyperplane
at infinity in finite time. This is impossible since $X$ is complete on $\C^n$. The proof of
the lemma is over.
\end{proof}

Again let $\Delta_{\infty}$ denote the hyperplane at infinity in $\C P(n)$. It follows from the preceding
that $\Delta_{\infty}$ is invariant by both $\cald$ and $\fol$. Besides, the foliations induced
on $\Delta_{\infty}$ by $\cald$ and $\fol$ turn out to coincide. The foliation induced by $\fol$ on $\Delta_{\infty}$
will be denoted by $\fol_{\infty}$. Also $\Delta_{\infty}$ corresponds to the divisor
of poles for both $X, \, X_d$. Since the methods developed in the previous sections apply to foliations associated
to homogeneous vector fields, they are in principle not applicable to $\cald$ but only to $\fol$.
However, near $\Delta_{\infty}$, the foliation $\cald$ becomes very close to $\fol$. In the sequel we are going
to combine these two issues in order to establish Theorem~A.

Let us begin by choosing affine coordinates $(x_1, \ldots, x_{n-1}, z)$ analogous to those
used in Sections~3, 4. Namely the hyperplane $\{ z=0\}$ is contained in $\Delta_{\infty}$ and the
plane at infinity $\Delta_{\infty}^{1,\ldots ,n-1}$ defined by the affine coordinates $x_1, \ldots, x_{n-1}$,
where $z=0$ is fixed, is not invariant by the restrictions of either $\cald$ or $\fol$ to $\Delta_{\infty}$.
We are then able to apply the results of Section~4 to the foliation $\fol$. In particular,
the leaves of $\fol$ are equipped with the (singular) real foliations $\calh^{\theta}$ where $\theta$
is chosen in the interval $(-\pi/2 , \pi/2)$. For the rest of this section, these foliations
will be denoted by $\calh_{\fol}$ (resp. $\calh^{\theta}_{\fol}$).
To define a suitable version of these
real trajectories in the leaves of $\cald$ we proceed as follows. Given a point $p=
(x_1^0, \ldots, x_{n-1}^0, z^0)$ with $z^0 \neq 0$, let $L_p$ denote the leaf of $\cald$ through
$p$. To define the foliation $\calh_{\cald}$ at $p$, we consider the function
$(x_1, \ldots, x_{n-1}, z) \mapsto \vert z \vert$ restricted to $L_p$. The tangent vector to
$\calh_{\cald}$ at $p$ is simply the negative of the gradient of the function in question. Once
$\calh_{\cald}$ is defined the foliations $\calh^{\theta}_{\cald}$ have an obvious definition since the leaves of $\cald$ have
natural conformal structures.

The next step in our construction consists of investigating the basic properties of
$\calh_{\cald}$ in analogy with the properties of $\calh_{\fol}$ considered in
Sections~3 and~4. Recalling that $\cald, \, \fol$ induce the same foliation $\fol_{\infty}$
on $\Delta_{\infty}$, consider a point $(x_1^0, \ldots ,x_{n-1}^0, 0) \in \Delta_{\infty}$ that is regular for
the restrictions of both $\cald, \, \fol$ to $\Delta_{\infty}$. Then we have:

\begin{lema}
\label{suzuki2.2}
The direction of $\calh_{\cald}$ at the point $(x_1^0, \ldots ,x_{n-1}^0, z)$ converges
uniformly to the direction of $\calh_{\fol}$ at $(x_1^0, \ldots ,x_{n-1}^0, 0)$. In particular
the foliation $\calh_{\cald}$ can be extended to the regular part of $\cald$ in $\Delta_{\infty}$ and
this extended foliation coincides with $\calh_{\fol}$ on $\Delta_{\infty}$.
\end{lema}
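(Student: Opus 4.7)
The plan is to show that both $\calh_{\cald}$ and $\calh_{\fol}$ can be read off from the same geometric datum, namely the negative gradient of the function $(x_1,\ldots,x_{n-1},z)\mapsto |z|$ restricted to the tangent plane of a leaf, and then to exploit the fact that $X$ and $X_d$ define the same tangent plane in the limit $z\to 0$.

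First I would verify that $\calh_{\fol}$ itself is the $|z|$-gradient descent direction on leaves of $\fol$. Indeed, the relation $z=z_0\exp(-\int_c \omega_1)$ from Section~3 gives $|z|=|z_0|\exp(-\mathrm{Re}\int_c \omega_1)$, so the direction maximizing the instantaneous decay of $|z|$ on the leaf is precisely the direction in which $\omega_1\cdot c'(t)$ is real and positive, which is the defining property of $\calh_{\fol}$. Thus the two foliations $\calh_{\cald}$, $\calh_{\fol}$ are of the \emph{same type} but constructed using the tangent planes of $\cald$ and of $\fol$ respectively.

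Next I would compare these tangent planes near $\Delta_{\infty}$. Writing $X=X_d+X_{d-1}+\cdots+X_0$ and performing the standard change of coordinates $(y_1,\ldots,y_n)\mapsto (y_1/y_n,\ldots,y_{n-1}/y_n,1/y_n)=(x_1,\ldots,x_{n-1},z)$, the pull-back of $X$ to the chart around $\Delta_{\infty}=\{z=0\}$ takes the form $z^{1-d}[\widetilde{X}_d+z\widetilde{X}_{d-1}+\cdots+z^d\widetilde{X}_0]$, where $\widetilde{X}_d$ is the explicit expression appearing in Equation~(\ref{tXX}) (here using $n-1$ base coordinates instead of two). After clearing the common pole factor $z^{1-d}$, the holomorphic vector field representing $\cald$ near a regular point of $\fol_{\infty}$ differs from the representative of $\fol$ by terms that vanish to order $\geq 1$ in $z$. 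Consequently, at a point $(x_1^0,\ldots,x_{n-1}^0,z)$ with $z$ small, the (complex) tangent line of the leaf of $\cald$ converges, as $z\to 0$, to the tangent line of the leaf of $\fol$ through $(x_1^0,\ldots,x_{n-1}^0,0)$, uniformly on compact subsets of the regular locus of $\fol_{\infty}$ in $\Delta_{\infty}$.

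Since the direction of $\calh_{\cald}$ (resp.\ of $\calh_{\fol}$) at a point $p$ is obtained by orthogonally projecting $-\nabla|z|$ onto the real $2$-plane underlying the complex tangent line $T_p L_p$, and this projection depends continuously (in fact real-analytically) on the tangent line when the line is transverse to the level set of $|z|$, the convergence of tangent lines established above forces the direction of $\calh_{\cald}(x_1^0,\ldots,x_{n-1}^0,z)$ to converge uniformly, as $z\to 0$, to the direction of $\calh_{\fol}(x_1^0,\ldots,x_{n-1}^0,0)$. This shows that $\calh_{\cald}$ extends continuously to the regular part of $\cald$ in $\Delta_{\infty}$ and that the extension agrees with $\calh_{\fol}$ there, completing the proof. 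The only mild obstacle is the degenerate case where the projection is ill-defined, namely when $\partial|z|$ vanishes on the tangent plane; but this happens precisely at points where the leaf is tangent to $\Delta_{\infty}$ (i.e.\ at singularities of $\calh$ of sink/source type discussed in Sections~3 and~4), which are excluded from the statement since we only consider regular points of $\cald$ on $\Delta_{\infty}$ with $z\neq 0$ in the approach.
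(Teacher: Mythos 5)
Your proof is correct and follows essentially the same route as the paper's: the paper likewise observes that $\calh_{\fol}$ admits the gradient-descent description via Formula~(\ref{omega1}) and that the behavior of $\cald$ near $\Delta_{\infty}$ is dominated by $X_d$, so the two foliations' directions converge. You have merely spelled out in more detail the two steps the paper treats as immediate.
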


\begin{proof}
Since the behavior of $\cald$ near $(x_1^0, \ldots ,x_{n-1}^0, 0)$ is dominated by the
component $X_d$ of $X$, it suffices to check that the trajectories of $\calh_{\fol}$
admit a definition analogous to the one given above for the trajectories of $\calh_{\cald}$.
In other words, it suffices to prove that the direction of $\calh_{\fol}$ at
$(x_1^0, \ldots ,x_{n-1}^0, z)$ coincides with the gradient of the function
$(x_1, \ldots, x_{n-1}, z) \mapsto \vert z \vert$ restricted to the leaf of $\fol$ through
$(x_1^0, \ldots ,x_{n-1}^0, z)$. This is, however, an immediate consequence of
Formula~(\ref{omega1}). The lemma is proved.
\end{proof}

\begin{obs}
\label{remarkboundedgeometry}
{\rm {\bf The trajectories $\calh_{\fol}$ and $\calh_{\cald}$ as geodesics for a foliated flat structure}. By building
on the proof of Lemma~\ref{suzuki2.2}, we can provide further and more accurate information on the discussion
about ``separating curves'' and ``flat structure with bounded geometry'' conducted in the Introduction
(after the statement of Theorem~A'). In the case of homogeneous foliations, such as $\fol$, every leaf $L$ of
$\fol$ is equipped with the $1$-form $\omega_1$ defined in Section~3. These include the leaves of $\fol$
contained in $\Delta_{\infty}$ (or $\Delta_0$) so that $\omega_1$ is a foliated $1$-form on the compact manifold $M$
which, in turn, gives $\omega_1$ its ``bounded geometry'' nature. Since the leaves of $\fol$ are Riemann surfaces,
the restriction of $\omega_1$ to one of these leaves $L$ can equally well be seen as singular flat structure
on $L$. It is an elementary fact that, in a local coordinate for the flat structure in question, the trajectories of
$\calh_{\fol}^{\theta}$ (including $\calh_{\fol}, \, \calh_{\fol}^{\perp}$) are straight lines and hence geodesics for the flat structure itself.
In particular, the trajectories of, say, $\calh_{\fol}^{\perp}$ satisfies all conditions
in the statement of Theorem~A' to be a {\it separating curve}.
In view of the preceding, these curves are also geodesics for a suitable flat
structure with bounded geometry on the corresponding leaf of $\fol$.

To extend the construction of the $1$-form $\omega_1$ to non-homogeneous foliations such as $\cald$, we proceed as
follows. Consider a leaf $L$ of $\cald$. We want to construct a $1$-form
$\omega_{1,\cald}$ on $L$ for which $\calh_{\cald}$ is the real foliation. For this, suppose
that $L$ is not contained in $\Delta_{\infty}$ and consider a regular point
$p \in L$ along with a vector $v \in T_p L$ pointed in the oriented direction of $\calh_{\cald}$. In particular,
all directions for $\calh_{\cald}^{\theta}$ and $\calh_{\cald}^{\perp}$ are immediately defined through the conformal
structure of $L$. To define $\omega_{1,\cald}$ at $p$, we just need to ``associate'' a complex number to these
real foliations. The complex number in question is simply the derivative of the holomorphic function $(x_1, \ldots ,
x_{n-1} ,z) \mapsto z$ restricted to $L$. This construction equips every leaf $L$ of $\cald$ not contained in $\Delta_{\infty}$
with an abelian form $\omega_{1,\cald}$ and hence with a singular flat structure. Finally, if $L$ is contained in
$\Delta_{\infty}$, then we pose $\omega_{1,\cald} = \omega_1$ since the foliation $\cald$ and $\fol$ coincide on
$\Delta_{\infty}$. The previous discussion show that $\omega_{1,\cald}$ is a foliated $1$-form for $\cald$ defined on
all of the compact manifold $M$. In this sense, the argument used in the case of $\omega_1$ can be repeated here to show
that the foliated flat structure arising from $\omega_{1,\cald}$ has bounded geometry. Moreover, the trajectories of
$\calh_{\cald}^{\perp}$ define separating curves in the sense of Theorem~A'.}
\end{obs}

To help us to explain how to derive properties of $X, \, \cald$ from properties of
$X_d, \, \fol$, it is convenient to consider a small neighborhood $V$ (in the $n$-dimensional ambient space)
of $({\rm Sing}\, (\cald) \cap \Delta_{\infty}) \cup {\rm Sing}\, (X)$, where ${\rm Sing}\, (\cald)$ (resp. ${\rm Sing}\, (X)$)
stands for the singular set of $\cald$ (resp. $X$). Next, denote by $U$ a neighborhood of $\Delta_{\infty} \setminus V$.
Also, in order to keep a ``uniform contractive character'' over trajectories of
$\calh_{\fol}^{\theta}$, we fix some (small) $\epsilon >0$ and consider only those
values of $\theta$ belonging to the interval $(-\pi/2 + \epsilon, \pi/2 - \epsilon)$.

It follows from our general construction that the endpoints belonging to $U$ for trajectories of $\calh_{\fol}^{\theta}$
are situated over $\Delta_{\infty}^{1,\ldots ,n-1}$. In particular a trajectory of $\calh_{\fol}^{\theta}$
through an affine point $(x_1^0, \ldots ,x_{n-1}^0, 0) \in U \cap \Delta_{\infty}$ will never approach
$\Delta_{\infty}^{1,\ldots ,n-1}$ unless it first enters $V$. Modulo choosing the neighborhood
$U$ sufficiently narrow, the restriction to $U$ of the foliation $\cald$ is very close to the (restriction to $U$ of the)
foliation $\fol$. A similar statement holds for the foliations $\calh_{\cald}$ and $\calh_{\fol}$ thanks to
Lemma~\ref{suzuki2.2}. In particular, we obtain the following:

\begin{lema}
\label{suzuki3.3}
Let $\epsilon >0$ be fixed.
Consider a point $p=(x_1^0, \ldots ,x_{n-1}^0, z^0) \in U$ and denote by $l_p^{+,\theta}$ the semi-trajectory of
$\calh_{\cald}^{\theta}$ initiated at $p$ for $\theta \in [-\frac{\pi}{2} + \epsilon, \frac{\pi}{2} - \epsilon]$.
Consider a path $c :[0,1] \rightarrow  l_p^{+,\theta} \cap U$, with $c(0) =p$, and set
$c(1) = (x_1^1, \ldots ,x_{n-1}^1, z^1)$. Then there is a constant ${\rm Cte}$ depending solely on
$\epsilon$ such that the following condition is always verified: whenever the length of $c$ exceeds
${\rm Cte}$, we have the estimate $\vert z^1 \vert < \vert z^0 \vert /2$.
\end{lema}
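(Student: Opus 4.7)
The plan is to transfer the exponential contraction of $|z|$ along $\calh_{\fol}^{\theta}$-trajectories (implicit in the proof of Theorem~\ref{introducelabel4}) to $\calh_{\cald}^{\theta}$-trajectories by exploiting the proximity of $\cald$ and $\fol$ near $\Delta_{\infty}$ guaranteed by Lemma~\ref{suzuki2.2}, together with the fact that $\cos\theta \geq \sin\epsilon > 0$ for every $\theta \in [-\pi/2 + \epsilon, \pi/2 - \epsilon]$.

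First I would introduce, on the regular part of $\cald$ inside $U$, an Abelian form $\omega_{1,\cald}$ on each leaf $L_p$, defined exactly as in Formula~(\ref{omega1}) but using the local equations of $\cald$ (i.e.\ of $X$) in place of those of $\fol$ (i.e.\ of $X_d$). By construction, for any path $c:[0,1]\to L_p$ with $c(0)=p=(x_1^0,\ldots,x_{n-1}^0,z^0)$ and $c(1)=(x_1^1,\ldots,x_{n-1}^1,z^1)$, one has
$$
|z^1| \;=\; |z^0|\,\exp\!\left(-\operatorname{Re}\int_c \omega_{1,\cald}\right).
$$
Since $X=X_d+(\text{lower order})$, on any compact subset $K$ of $\Delta_{\infty}\setminus V$, the form $\omega_{1,\cald}$ converges uniformly to $\omega_1$ as $z\to 0$; by shrinking the neighborhood $U$ we may assume $|\omega_{1,\cald}|\geq\alpha>0$ on $K$ (where $\alpha$ is, say, half the infimum of $|\omega_1|$ on $K$), and that the angular deviation between $\calh_{\cald}$ and (the extension of) $\calh_{\fol}$ is less than any preassigned small amount, by virtue of Lemma~\ref{suzuki2.2}.

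Second, by the very definition of $\calh_{\cald}^{\theta}$ as the negative-gradient foliation of $|z|$ rotated by angle $\theta$ inside each leaf of $\cald$, parametrizing $c$ by arc length $s$ yields the pointwise identity
$$
\operatorname{Re}\bigl(\omega_{1,\cald}(c(s))\cdot c'(s)\bigr) \;=\; \cos\theta\,\bigl|\omega_{1,\cald}(c(s))\bigr|,
$$
so that, integrating,
$$
\log\frac{|z^0|}{|z^1|} \;=\; \cos\theta\int_0^{\operatorname{length}(c)}\bigl|\omega_{1,\cald}(c(s))\bigr|\,ds \;\geq\; \sin\epsilon\int_0^{\operatorname{length}(c)}\bigl|\omega_{1,\cald}(c(s))\bigr|\,ds.
$$
To bound the right-hand side from below I would invoke the $\cald$-analogue of Lemma~\ref{remainingcompact}: since the source-type character of points of $\Delta_{\infty}^{1,\ldots,n-1}$ persists on a whole neighborhood of $\Delta_{\infty}$ by Lemma~\ref{suzuki2.2}, the same compact $K$ and constant $C_K$ serve the foliation $\calh_{\cald}^{\theta}$ provided $U$ has been taken narrow enough. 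Consequently, whenever $\operatorname{length}(c)>C_K$ at least $9/10$ of this length lies in $K$, where $|\omega_{1,\cald}|\geq\alpha$, and we obtain
$$
\log\frac{|z^0|}{|z^1|}\;\geq\;\sin\epsilon\cdot\alpha\cdot\tfrac{9}{10}\,\operatorname{length}(c).
$$
Setting $\mathrm{Cte}=\max\bigl(C_K,\ 10\log 2/(9\alpha\sin\epsilon)\bigr)$ then forces $|z^1|<|z^0|/2$.

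The main obstacle is the uniformity in $\theta$ and in the basepoint $p$: the constants $\alpha$ and $C_K$ must be fixed before the trajectory. This is handled by a one-time choice of $V$, $U$ and $K$ so that Lemma~\ref{remainingcompact} applies, and then a further shrinking of $U$ ensuring that the perturbation $\fol\rightsquigarrow\cald$ changes $|\omega_1|$ by at most a factor $1/2$ on $K$ and tilts $\calh_{\cald}^{\theta}$ from $\calh_{\fol}^{\theta}$ by at most $\epsilon/2$ (an angular error absorbed harmlessly by replacing $\sin\epsilon$ by $\sin(\epsilon/2)$ in the final estimate). Once this preparatory choice is made, the resulting constant $\mathrm{Cte}$ depends only on $\epsilon$, as required.
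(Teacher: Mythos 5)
Your proposal is correct and follows essentially the same route as the paper: the paper's own proof is a two-line reduction to the estimate $\vert z\vert \leq \vert z_0\vert e^{-\da\,{\rm length}(c)/3}$ already established in the proof of Theorem~\ref{introducelabel4} (giving ${\rm Cte}=3\ln 2/2\da$ there), combined with the observation that $\calh_{\cald}$ is uniformly close to $\calh_{\fol}$ inside $U$ by Lemma~\ref{suzuki2.2}. You have simply made explicit the ingredients the paper leaves implicit — the form $\omega_{1,\cald}$, the factor $\cos\theta\geq\sin\epsilon$, and the transfer of Lemma~\ref{remainingcompact} to $\cald$ — all of which are sound.
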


\begin{proof}
It follows immediately from the proof of Theorem~\ref{introducelabel4} concerning the foliation $\fol$. More precisely,
the statement was shown for $\calh_{\fol}$ but the adaptations needed for the foliations $\calh_{\fol}^{\theta}$ are clear.
The present statement follows from the fact that inside $U$ the foliation $\calh_{\fol}^{\theta}$ is ``very close''
to $\calh_{\cald}^{\theta}$.
\end{proof}

One last ingredient is still needed for the proof of Theorem~A. Let $l_p^{+,\theta}$ be a trajectory as in Lemma~\ref{suzuki3.3} and denote by
$L_p$ the leaf of $\cald$ containing $l_p^{+,\theta}$. The idea behind the statement of Theorem~A consists of estimating the integral
of $dT_L$ over $l_p^{+,\theta}$ where $dT_L$ stands for the time-form induced by $X$ on $L_p$. In Section~4 suitable estimates
for this type of integral were obtained in the case of homogeneous vector fields. The estimate is based on the ``renormalized time-form'' induced on
$\Delta_{\infty}$ by the vector field and on the evolution of the distance of the points to $\Delta_{\infty}$ (the ``height'' of the points).
As to the height of points, the preceding lemma provides a suitable control of their evolution over trajectories of $\calh_{\cald}^{\theta}$
in the case of non-homogeneous polynomial vector fields. Finally we recall that the foliations induced by $X$ and by $X_d$ on $\Delta_{\infty}$
turn out to coincide and the same holds for the ``renormalized time-forms'' induced on $\Delta_{\infty}$ by $X$ and by $X_d$.

We are now ready to prove Theorem~A.

\begin{proof}[Proof of Theorem~A]
Consider the foliation $\cald$ induced by $X$ on $\C P(n)$ and let $\Delta_{\infty}$ be as above. Let $V$ denote the
given neighborhood of $({\rm Sing}\, (\cald) \cap \Delta_{\infty}) \cup {\rm Sing}\, (X)$ and fix
$\epsilon > 0$. Next, choose a neighborhood $U$ of $\Delta_{\infty} \setminus V$ so that the statement of Lemma~\ref{suzuki3.3} holds.
It is sufficient to prove the theorem for the foliation $\calh_{\cald}$ since the adaptations needed to the general case of the foliations
$\calh_{\cald}^{\theta}$, $\theta \in (-\pi/2 + \epsilon , \pi/2 -\epsilon)$, are clear.

Consider a point $p=(x_1^0, \ldots ,x_{n-1}^0, z^0) \in U \setminus V$. Denote by
$l_p^+$ (resp. $L_p$) the semi-trajectory of $\calh_{\cald}$ initiated at $p$ (resp. the leaf of $\cald$ through $p$).
Suppose first that $l_p^+$ is entirely contained in $U$. To explain the
structure of the proof of our theorem, we shall first prove that the preceding assumption contradicts the fact that the vector
field $X$ is complete. For this, we are going to show that the integral of the time-form $dT_L$ induced by $X$ on $L_p$ over
$l_p^+$ converges. Since it clearly accumulates on $\Delta_{\infty}$ (in particular $l_p^+$ leaves every compact set contained in $L_p$)
the convergence of the mentioned integral contradicts the completeness of $X$. Let us also point out that our claim reduces to
Theorem~\ref{introducelabel4} in the case of homogeneous vector fields.

To adapt the proof of Theorem~\ref{introducelabel4} to the present case where $X$ is not homogeneous we proceed as follows.
The choice of the coordinates $(x_1, \ldots, x_{n-1}, z) = (\underline{x}, z)$ allows us to write $X_d$ in the form
$$
X_d = z^{1-d} [F_1 (\underline{x}) \partial /\partial x_1 + \cdots + F_{n-1} (\underline{x}) \partial /\partial x_{n-1} + z H (\underline{x}) \partial /\partial z]
$$
whereas the vector field $X$ becomes
$$
X =  z^{1-d}  [ F_1^{\ast} (\underline{x}, z) \partial /\partial x_1 + \cdots + F_{n-1}^{\ast} (\underline{x}, z) \partial /\partial x_{n-1} +
z H^{\ast} (\underline{x}, z) \partial /\partial z \!]  .
$$
Besides the coefficients $F_i, \, F_i^{\ast}$, $i=1, \cdots , n-1$, (resp. $H_i, \, H_i^{\ast}$) are related by the formulas
$$
F_i^{\ast} (x_1, \ldots , x_{n-1}, z) - F_i (x_1, \ldots , x_{n-1}) = z P_i (x_1, \ldots , x_{n-1}, z)
$$
(resp. $H^{\ast} (x_1, \ldots , x_{n-1}, z) - H (x_1, \ldots , x_{n-1}) = z Q (x_1, \ldots , x_{n-1}, z)$)
where $Q, \, P_i$ are polynomials in the variables in question. Next note that the time-form $dT_L$ is given by
$$
dT_L = \frac{z^{d-1} }{F_1^{\ast} (x_1, \ldots , x_{n-1}, z)} \, dx_1 = \cdots = \frac{z^{d-1} }{F_{n-1}^{\ast} (x_1, \ldots , x_{n-1}, z)} \, dx_{n-1} \, .
$$
Now since $U$ does not intersect the singular set of $X$ (or $\cald$), we can suppose without loss of generality that
$F_1^{\ast} (x_1, \ldots , x_{n-1}, z)$ is bounded from below by a positive constant $\beta$ in $U$, otherwise we replace $F_1^{\ast}$ by a suitable
$F_i^{\ast}$. This last estimate combined to Lemma~\ref{suzuki3.3} then shows that the integral of $dT_L$ over $l_p^+$ is bounded by simply repeating
the calculations performed in the proof of Theorem~\ref{introducelabel4}.

We are then led to conclude that the semi-trajectory $l_p^+$ must intersect the neighborhood $V$ of
$({\rm Sing}\, (\cald) \cap \Delta_{\infty}) \cup {\rm Sing}\, (X)$ regardless of how small is $V$. In particular,
it may happen that $l_p^+$ accumulates on singular points of $\cald$ lying in $\Delta_{\infty}$.
In this case the integral of $dT_L$ over $l_p^+$ cannot be bounded without additional conditions. Fortunately, in order
to establish Theorem~A, we do not need to keep track of the amount of ``time'' that $l_p^+$ spends {\it inside}\, $V$ but rather of the amount
of time that $l_p^+$ spends away from $V$. To be more precise, let us prove the following:

\vspace{0.1cm}

\noindent {\it Claim}. The distance between the trajectory $l_p^+$ and the hyperplane $\Delta_{\infty}$ cannot have a minimum unless this
minimum is {\it zero}. Besides, when the latter case happens,
the intersection point $l_p^+ \cap \Delta_{\infty}$ is never reached by the flow of $X$.

Before starting the proof of the claim, it is convenient to make some general comments regarding the possibility of having a point $q$ in
$l_p^+ \cap \Delta_{\infty}$. A first case where this may happen arises from the definition of ``leaf'' given in Section~2 and borrowed from
\cite{marco2}. According to this definition, the leaf $L_p$ of $\cald$ may contain a singular point of $\cald$ lying in $\Delta_{\infty}$.
In fact, in this case, a local branch of $L_p$ about $q$ defines an irreducible separatrix for $\cald$ at $q$. It is then natural to think
of $q$ as belonging to $l_p^+$. More generally, it may happen that $l_p^+$ converges to a point $q$ lying in $\Delta_{\infty}$ whether or
not $q$ belongs to $L_p$. With a small abuse of notation, the point $q$ may be thought of as belonging to $l_p^+$. In all these cases the
statement of Theorem~A is clear: the completeness of $X$ implies that the integral of $dT_L$ over a local branch of $l_p^+$ converging
to $q$ is infinite. So $l_p^+$ enters every given neighborhood of $q$ and remains inside ``forever''. The statement then follows from
observing that $q$ must be a singular point of $\cald$ since $\Delta_{\infty}$ is invariant by $\cald$.

A further reduction in the proof of Theorem~A is possible even though not strictly needed. Namely,
with the above notations, we can suppose that (a local branch of)
$l_p^+$ never converges to a point $q$ that is singular for $X$ (and in particular for $\cald$). In fact, if this point belongs
$\Delta_{\infty}$ then the theorem results immediately as previously seen. Similarly, if
$q \in {\rm Sing}\, (X) \setminus \Delta_{\infty}$, then the theorem follows from the standard results
on existence and uniqueness of solutions for regular ordinary differential equations.

\noindent {\it Proof of the Claim}.
Given what precedes, let us suppose for a contradiction that
$q$ is a point of minimum for the mentioned distance and that $q$ lies in $\C P(n) \setminus \Delta_{\infty}$.
First, we are going to prove that the point $q$ must belong to the domain of definition of the coordinates
$(x_1, \ldots , x_{n-1}, z)$. Since $X$ is not homogeneous and $q$ is not in $\Delta_{\infty}$, this assertion is not an immediate
consequence of Lemma~\ref{lineatinfinity}. Thus, in order to prove it, suppose that $c: [0,1) \rightarrow l_p^+$ is a local
parametrization of $l_p^+$ with $\lim_{t\rightarrow 1^-} c(t) =q$. Setting $c(t) = (x_1 (t), \ldots ,x_{n-1} (t), z(t))$,
it follows that $z(t)$ is locally bounded at $q$. If, in addition, $(x_1 (t), \ldots ,x_{n-1} (t))$ leaves the domain of
definition of coordinates $(x_1, \ldots , x_{n-1}, z)$, then by using standard coordinates of $\C P(n)$ whose domain
contains $\Delta_{\infty}$, it immediately follows from the bounded character of $z(t)$ that $\lim_{t\rightarrow 1^-} c(t) =q$
actually belongs to $\Delta_{\infty}$. As already shown, the statement of the theorem holds when the situation in question occurs.

Summarizing the above discussion, we can suppose that $q = (q_1, \ldots, q_n)$ is a regular point for $X, \, \cald$ belonging to the
domain of definition of the coordinates $(x_1, \ldots , x_{n-1}, z)$ and verifying $q_n \neq 0$. A final contradiction can now be
obtained as follows. Let $\Phi (T) = (\Phi_1 (T), \ldots , \Phi_n (T))$ be a local parametrization of $L_p$ about $q$ ($\Phi (0) =q$).
Since $q$ is a regular point for $X$, the holomorphic map $T \mapsto \Phi_n (T) \in \C$ is not constant and hence it
must be open what, in turn, contradicts the assumption that $\vert \Phi_n \vert$ has
a (positive) local minimum at $T=0$. The claim is proved.\qed

To finish the proof of Theorem~A consider now the semi-trajectory $l_p^+$. The above discussion shows that $l_p^+$ accumulates on $\Delta_{\infty}$,
in particular $l_p^+$ leaves compact sets of $L_p$. The completeness of $X$ then implies
$$
\int_{l_p^+} dT_L = \infty \; .
$$
Consider a decomposition $l_p^+ =  c_1 \ast c_2 \ast \cdots$ of $l_p^+$ in finitely or infinitely many paths such that $c_k$ is contained in $U$
for $k$ odd and $c_k$ is contained in $V$ for $k$ even. The statement is now reduced to prove that
$$
\sum_{k=0}^{\infty} \left[ \int_{c_{2k+1}} dT_L \right] < \infty \, .
$$
The last estimate however follows from the same argument employed above in the case where $l_p^+$ was entirely contained in $U$.
It suffices to observe that the claim guarantees that $\vert c_{2(k+1) +1} (0) \vert < \vert c_{2k+1} (1) \vert$. The theorem is proved.
\end{proof}

We can now prove Theorem~A'.

\begin{proof}[Proof of Theorem~A']
Consider again a fixed point $p$ and let $\Phi_p : \C \rightarrow L_p$ be given by $\Phi_p (T) = \Phi (T, p)$
where $L_p$ stands for the leaf of $\cald$ through $p$. In the affine coordinates $(x_1, \ldots ,x_{n-1}, z)$,
the map $\Phi_p$ becomes $(\Phi_1 (T), \ldots , \Phi_n (T))$. In particular, this allows us to define the Abelian form
$\eta$ on $\C$ by letting $\eta = - \Phi_n^{'} dT / \Phi_n$. Next, if the oriented
foliation $\calh$ is restricted to $L_p$, then we can consider the corresponding pulled-back oriented foliation
$\Phi_p^{\ast} \calh$ on $\C$.

\vspace{0.1cm}

\noindent {\it Claim 1}: The oriented foliation $\Phi_p^{\ast} \calh$ coincides with the real (positive) foliation induced by $\eta$.

\noindent {\it Proof of Claim 1}. Consider a point $q = \Phi_p (T_0) \in L_p$ that is regular for $\calh$. The direction of $\calh$ at $q$
is determined by the negative of the gradient of the ``height'' function $(x_1, \ldots ,x_{n-1}, z) \mapsto \Vert z \Vert$ restricted to $L_p$.
In the coordinate $T$ this function is simply $T \mapsto \Vert \Phi_n (T) \Vert$. The gradient direction of this latter function is determined
by the condition that $\Phi_n^{'} (T_0) (T-T_0)$ must be aligned with $\Phi_n (T_0)$. This amounts to saying that the direction of
$\Phi_p^{\ast} \calh$ at $T_0$ is nothing but the positive real direction of $\eta$.\qed

\vspace{0.1cm}

To abridge notations the foliation $\Phi_p^{\ast} \calh$ will be denoted by $\{ {\rm Arg}\, \eta =0\}$. More generally, the pull-back by
$\Phi_p$ of the foliations $\calh^{\theta}$ coincide with $\{ {\rm Arg}\, \eta =\theta\}$, in particular $\{ {\rm Arg}\, \eta =\pi/2\}$
is the foliation orthogonal to $\{ {\rm Arg}\, \eta =0\} = \Phi_p^{\ast} \calh$.

The separating curve $c_0$ to be chosen is given by the trajectory of $\{ {\rm Arg}\, \eta =\pi/2\}$ through $T_0$
i.e. a trajectory of $\calh^{\perp}$. Geometrically,
$\Phi_p (c_0)$ is the curve determined in $L_p$ by the intersection of $L_p$ itself with the real hypersurface $\{ \Vert z \Vert =
\vert \Phi_n (T_0) \vert \}$. This curve may be closed. Next, we choose the component $\mathcal{U}^+$ of $\C \setminus c_0$ that
corresponds to the saturated of $T_0$ by the spray of trajectories of $\{ {\rm Arg}\, \eta =\theta\}$ issued from $T_0$ with
$\theta \in (-\pi/2 ,\pi/2)$. To check that $\mathcal{U}^+$ is unbounded just notice that a trajectory $l_p^{+, \theta} \subset L_p$,
$\theta \in (-\pi/2 ,\pi/2)$, issued from $p$ will, by construction, accumulate on $\Delta_{\infty}$
unless it accumulates on a singularity of $X$ lying in $\C^n$.
The statement being clear in the latter case, let us consider that $l_p$ accumulates on $\Delta_{\infty}$ so that
it leaves every compact set contained in $L_p$. Since $X$ is complete, it results that the integral of the
corresponding time-form over $l_p^{+, \theta}$ is unbounded. Next, note that the pre-image of $l_p^{+, \theta}$ by $\Phi_p$
is the trajectory of $\{ {\rm Arg}\, \eta =\theta\}$ issued from $T_0$. Furthermore, the pre-image by $\Phi_p$ of the time-form
induced on $L_p$ is nothing but the canonical form $dT$ on $\C$.
Thus the integral of the time-form over segments of $l_p^{+, \theta}$ is equal to the integral
of the form $dT$ over corresponding segments of the mentioned trajectory. It then follows that the trajectory in question
must leave every compact set contained in $\C$ what shows that $\mathcal{U}^+$ is unbounded.

Summarizing to show that $\mathcal{U}^+$ satisfies all the conditions in the statement it only remains to check that
\begin{equation}
\lim_{r \rightarrow \infty} \frac{ {\rm Meas}\, (\mathcal{T}_V \cap B_r)}
{{\rm Meas}\, (\mathcal{U}^+ \cap B_r)} =1 \, . \label{thelastone}
\end{equation}

To begin with, note that $\eta$ is holomorphic in $\mathcal{U}^+$ since $\Phi_n (T)$ never reaches $0 \in \C$.
Furthermore the trajectories of $\calh^{\theta}$, $\theta \in (-\pi/2 ,\pi/2)$, approach
$\{ z=0\}$. These trajectories, in fact, remain in a compact part of the domain of definition
of the coordinates $(x_1, \ldots ,x_{n-1}, z)$ since the ``infinity'' of
$\{ z=0\}$ consists of poles with residue equal to~$1$ for the abelian form $\omega_1$ in Section~3, cf. Lemma~\ref{lineatinfinity}.
Hence the same thing happens for $\omega_{1,\cald}$ since these forms coincide on $\Delta_{\infty}$.
In other words, on a neighborhood of $\Delta_{\infty}^{1,\ldots ,n-1}$, the trajectories of
$\{ {\rm Arg}\, \eta =\pi/2\}$ are closed curves while the trajectories of
$\calh^{\theta}$ for $\theta \in (-\pi/2 ,\pi/2)$ point outward these closed curves. The preceding claim then becomes clear.
As a consequence of this, we conclude that the absolute value of the coefficient of $\eta$ is uniformly bounded in
$\mathcal{U}^+ \setminus \mathcal{T}_V$ since away from $\mathcal{T}_V$ the form $\omega_{1,\cald}$ is bounded from
below by a positive constant. As already explained in Remark~\ref{remarkboundedgeometry},
$\eta$ defines a singular flat structure on $\mathcal{U}^+$ for which the trajectories of
$\{ {\rm Arg}\, \eta =\theta\}$ are geodesics (``straight lines''). This leads us to

\vspace{0.1cm}

\noindent {\it Claim 2}: Given $\varepsilon >0$, there is $\delta >0$ such that the saturated $\mathcal{U}_{\delta}^+$ of $T_0$ by trajectories of
$\{ {\rm Arg}\, \eta =\theta\}$ for $\theta \in (-\pi/2 +\delta ,\pi/2 - \delta)$ verifies
$$
\liminf_{r \rightarrow \infty}
\frac{ {\rm Meas}\, [(\mathcal{U}_{\delta}^+ \cap B_r) \cup (\mathcal{T}_V \cap (\mathcal{U}^+ \setminus \mathcal{U}^+_{\delta}))]}
{{\rm Meas}\, (\mathcal{U}^+ \cap B_r)} > 1- \varepsilon \, .
$$

\vspace{0.1cm}

\noindent {\it Proof of Claim 2}. The statement would be clear if the flat structure induced by $\eta$ were the standard flat
structure of $\C$. More generally, suppose that $\eta$ has no singular points and consider an arc of circle $S_{r_0}$
(about $T_0$) of radius $r_0$ whose interior
contains no singular points of $\eta$. Consider also two trajectories $l_{\theta_1}, \, l_{\theta_2}$ issued from $T_0$.
Since $X$ is complete these trajectories
intersect $S_{r_0}$ at points $T_1, T_2$.  Because $\eta$ is closed, the integral of $\eta$ over the boundary of the triangle
whose sides are the segments of $l_{\theta_1}, \, l_{\theta_2}$ delimited by $T_0$ and $T_1, T_2$ and the corresponding arc
of $S_{r_0}$ determined by $T_1, T_2$ equals zero. Finally, since the coefficient of $\eta$ is uniformly bounded (and bounded
from below if we stay away from its singular points), we conclude that the length of the arc of $S_{r_0}$ determined by
$T_1, T_2$ is bounded by ${\rm Const} r_0 \vert \theta_1 - \theta_2 \vert$ for every pair $\theta_1, \theta_2$. The
desired estimate follows from this since  $S_{r_0}$ contains no singularities of $\eta$ in its interior.

To finish the proof of the claim, we need to consider the effect of the singularities of $\eta$.
These singularities are of saddle type since $\eta$ is holomorphic on $\mathcal{U}^+$. For every singular point
of $\eta$, we consider a disc of uniform radius about the corresponding point in $L_p$. In the complement of the union
of these discs, the form $\omega_1$, and hence $\eta$ in the coordinate $T$, is bounded from below by a positive constant.
The claim will be proved if the union of these discs in the coordinate $T$ has area less than $\varepsilon r/2$ for $r$ large.
In fact, in the complement of this union $\eta$ is bounded from below by a positive constant and from above
by the previous constant so that the preceding argument can be applied in finitely many regions of a ball $B_r$.
Finally, to justify the previous claim note that, in order to prove the desired estimate, we only need to consider those
discs about points in $L_p$ that lie in the complement of $V$. Therefore the norm
of $X$ is bounded from below by a positive constant in these discs what, in turn,
ensures that their size in the coordinate $T$ is uniformly bounded.
Besides, the distance in the leaf $L_p$ between every two discs as before is bounded from below by a positive constant.
Though this property is not directly reflected in the coordinate $T$ since the norm of $X$ increases
(i.e. $X$ becomes ``faster''), the size of the corresponding neighborhoods reduces in the same proportion as the norm of
$X$ increases. This quickly leads to the desired conclusion and establishes the claim.\qed

In view of Claim~2 to finish the proof of Theorem~A' it suffices to show that
$$
\lim_{r \rightarrow \infty} \frac{ {\rm Meas}\, (\mathcal{T}_V \cap B_r \cap \mathcal{U}_{\delta}^+)}
{{\rm Meas}\, (\mathcal{U}_{\delta}^+ \cap B_r)} =1 \, ,
$$
for fixed positive $\delta$. To do this, consider $r$ given. Next, note that every for $\theta \in (-\pi/2 +\delta ,\pi/2 - \delta)$
the corresponding trajectory $l_{\theta}$ of $\{ {\rm Arg}\, \eta =\theta\}$
issued from $T_0$ intersects the boundary $\partial B_r$ of $B_r$ since $X$ is complete. Let $T_{\theta, r}$ be this intersection
point and denote by $l_{\theta, r}$ the segment of $l_{\theta}$ delimited by $T_0$ and $T_{\theta, r}$. According to Theorem~A,
there is uniform constant ${\rm Cte}$ (depending neither on $\theta$ nor on $r$) such that the length of the segments of
$l_{\theta, r}$ corresponding to those instants where $\Phi (T)$ remains away from $V$ is bounded by ${\rm Cte}$
whereas the length of $l_{\theta, r}$ goes to infinity as $r \rightarrow \infty$. The statement of Theorem~A'
now results from a standard application of Fubini's theorem.
\end{proof}

\subsection{Complete polynomial vector fields on $\C^n$ with simple singularities at infinity}

In this section we shall give an application of Theorem~\ref{maintheo} that cannot be obtained from Theorem~\ref{introducelabel4}
alone. Let $X$ be a complete polynomial vector field on $\C^n$ and denote by $\cald$ its associated foliation. Recall that we make
no distinction between $\cald$ viewed as a foliation on $\C^n$ and $\cald$ viewed as a foliation on $\C P(n)$. Again
$X_d$ denotes the homogeneous component of highest degree ($d$) of $X$ and $d$ is supposed to be
at least~$2$. The foliation associated to $X_d$ is denoted
by $\fol$ and can also be viewed as a foliation on both $\C^n$ or $\C P(n)$. Recall that the singularities of $\cald$ in the
hyperplane at infinity $\Delta_{\infty}$ are ``simple'' in the sense of Conditions~1 and~2
given in the Introduction (just before the statement of Theorem~B). There follows that these singularities are
isolated inside $\Delta_{\infty}$ (but maybe not inside $\C P(n)$). Furthermore, if $X_d$ is divisible by a non-constant
polynomial $P$, then $P$ must also divide $X$. Otherwise, the curve induced on $\Delta_{\infty}$
by $\{ P=0\}$ would contain singularities of $\cald$ whose linear part is degenerate: this is impossible
since the singularities of $\cald$ are supposed to be simple. In turn, the last observation implies that
the singular sets of  $\cald$ and of $\fol$ coincide on $\Delta_{\infty}$. Finally, $X_d$ must have a singular set of codimension
at least~$2$ since the singular set of $X$ has codimension two or greater.

The reader is reminded that the restriction of $\cald$ to $\Delta_{\infty}$ coincides with the foliation $\fol_{\infty}$ induced
by $\fol$ on $\Delta_{\infty}$. Besides, if
 $q\in \Delta_{\infty}$ is a (necessarily common) singular point of $\cald, \, \fol$, then the corresponding
linear parts of these foliations at $q$ turn out to coincide as well.

\begin{lema}
\label{Cstar1.1}
With the definition of leaf from Section~$2$, the leaves of $\fol_{\infty}$ are either rational curves
or quotients of $\C$.
\end{lema}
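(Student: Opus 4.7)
The plan is to combine the completeness of $X$ on $\C^n$ with Brunella's theorem on the plurisubharmonic variation of the leafwise Poincar\'e metric \cite{marco2} in order to rule out hyperbolic leaves of $\fol_{\infty}$.

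First I would observe that every leaf of $\cald$ lying in $\C^n$ is a quotient of $\C$. Indeed, for any $p \in \C^n$ with $X(p) \neq 0$, the completeness of $X$ guarantees that the map $T \mapsto \Phi(T,p)$ is entire and surjects onto the leaf through $p$, which is therefore biholomorphic to $\C$, $\C^{\ast}$, or an elliptic curve. In particular, the leafwise Poincar\'e metric of $\cald$ vanishes identically on the open set $\C^n \subset \C P(n)$.

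The key step is then to invoke Brunella's theorem applied to the foliation $\cald$ on the compact K\"ahler manifold $\C P(n)$: the logarithm of the leafwise Poincar\'e metric is a plurisubharmonic function on $\C P(n) \setminus \mathrm{Sing}(\cald)$, and extends as such across the pluripolar singular set by the standard removability result for plurisubharmonic functions. Since this function is identically $-\infty$ on the open dense subset $\C^n$, and $\C P(n)$ is connected, the maximum principle for plurisubharmonic functions forces it to be identically $-\infty$ on all of $\C P(n)$. Consequently the Poincar\'e metric vanishes on every leaf of $\fol_{\infty} = \cald|_{\Delta_{\infty}}$, so each such leaf is either a rational curve or uniformized by $\C$.

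The main technical obstacle I anticipate concerns the leaf convention from Section~3: a leaf $L_{\infty}$ of $\fol_{\infty}$ may contain finitely many singular points of $\cald$ added through the vanishing-end construction, and Brunella's statement is naturally formulated on the smooth locus of the foliation. This is handled by carrying out the argument above on the smooth part of $L_{\infty}$, where the conclusion that the Poincar\'e metric vanishes already implies that this smooth part is covered by $\C$; re-attaching the missing points through the orbifold normalization recalled in Section~3 preserves the parabolic-versus-rational dichotomy and yields the stated conclusion for $L_{\infty}$ itself.
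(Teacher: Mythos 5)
Your proposal is correct and follows essentially the same route as the paper: the paper's proof also deduces that the affine leaves of $\cald$ are parabolic from the completeness of $X$ and then invokes Brunella's theorem from \cite{marco2} (stated there as: the set of parabolic leaves is pluripolar unless it is everything) to conclude that the leaves in $\Delta_{\infty}$ cannot be hyperbolic. Your extra care about the plurisubharmonic formulation and about the vanishing-end leaf convention only makes explicit details the paper leaves implicit.
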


\begin{proof}
We need to show that the leaves of $\fol_{\infty}$ cannot be hyperbolic Riemann surfaces. Since $\C P(n)$ has a K\"ahler
structure, it follows from the main result of \cite{marco2} that the set of parabolic leaves of $\cald$ is a pluri-polar set unless it coincides
with the whole space. Since the leaves of $\cald$ contained in $\C^n$ are clearly parabolic, we conclude that the leaves of $\cald$
contained in $\Delta_{\infty}$ must also be parabolic (or rational). The latter leaves, however, are nothing but the leaves of $\fol_{\infty}$.
\end{proof}

The next step is to consider the foliation $\tilf$ induced on the manifold $M$ by $\fol$. In particular, the
foliation $\tilf_{\infty}$ is naturally identified with $\fol_{\infty}$. In view of the existence of the projections $\calp_0, \, \calp_{\infty}$
introduced in Section~3, Lemma~\ref{Cstar1.1} implies that no leaf of $\tilf$ is hyperbolic. Note that this conclusion cannot directly be
derived from the vector field $X_d$ since $X_d$ need not be complete (it is only semi-complete).

The next proposition relies heavily on Theorem~\ref{maintheo} and it will play a crucial role in the proof of Theorem~B.

\begin{prop}\label{hyperbolicleaves}
There exists a singularity of $\tilf_{\infty}$ providing a sink singularity
for $\calh$ (resp. $\calh^{\theta}$) restricted to a generic leaf of  $\tilf_{\infty}$.
\end{prop}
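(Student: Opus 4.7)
The plan is to argue by contradiction. Assume that no singular point of $\tilf_{\infty}$ yields a sink for $\calh$ (equivalently $\calh^{\theta}$) on a generic leaf of $\tilf_{\infty}$. The strategy is to construct an infinite-length trajectory of $\calh^{\theta}$, use Theorem~\ref{maintheo} to turn it into a finite-length curve in the complex-time parameter of the corresponding leaf of $\tilf$, and then exploit the parabolic character of that leaf to force the trajectory to terminate at a sink singularity of $\tilf_{\infty}$, contradicting the assumption.

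First I would use the local analysis of $\calh$ at simple singularities worked out in Section~\ref{sechom} and in the lemmas preceding Proposition~\ref{siegeldomain} to build, on a generic leaf $L_{\infty}$, a trajectory $l_p^{\theta}$ of infinite length. Lemma~\ref{thislabelwasmissing} shows that singularities with $H(0,0)=0$ yield only regular points or saddles of $\calh$ that can be locally bypassed without losing the contractive character of the holonomy (Remark~\ref{rem}); Lemma~\ref{quotientinR+} shows that singularities with eigenvalue ratio in $\Q^+$ are either sinks (excluded by assumption) or sources, which repel $\calh$-trajectories; and Proposition~\ref{siegeldomain} handles the Siegel case $\dl_1/\dl_2 \in \Q^-$ by showing that each passage through a neighborhood contributes only a uniformly bounded amount to $\int dT$. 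Concatenating these local continuations produces the desired $l_p^{\theta}$ of infinite length.

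Next, I would lift $l_p^{\theta}$ to a trajectory $l_q^{\theta}$ in the leaf $L$ of $\tilf$ through any $q \in \calp^{-1}_{\infty}(p)$ and apply Theorem~\ref{maintheo} to obtain $\int_{l_q^{\theta}} dT < \infty$. Lemma~\ref{Cstar1.1}, combined with the projections $\calp_0, \calp_{\infty}$ that identify $L$ as an Abelian covering of $L_{\infty}$ (Section~\ref{sechom}), shows that $L$ is parabolic. As recalled in the Introduction, this forces the solution $\Phi$ of $\tXX$ through $q$ to be meromorphic on $\C$ or $\C \setminus \{0\}$, with $dT$ pulling back to the canonical differential on its domain. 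The finiteness of $\int_{l_q^{\theta}} dT$ then means that the $\Phi$-preimage of $l_q^{\theta}$ is a rectifiable curve of finite length in the domain of $\Phi$, which converges to a definite endpoint $T_0$. Since $l_q^{\theta}$ itself has infinite length on $L$ while its $T$-preimage does not, the point $\Phi(T_0)$ must lie on the pole divisor of $\tXX$; invariance of $\Delta_{\infty}$ under $\tilf$ forces $\Phi(T_0)$ to be a singular point $q_0$ of $\tilf_{\infty}$.

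The final step, which I expect to be the main obstacle, is to show that $q_0$ must in fact be a sink for $\calh$, contradicting the standing hypothesis. For the non-sink simple singularities---saddle cases coming from $H(0,0)=0$, sources, and Siegel points---one has to rule out that an infinite-length trajectory of $\calh$ actually converges to a single such singularity in finite $T$-time. For saddles and sources this should follow from the local behaviour of the orthogonal foliation $\calh^{\perp}$, whose trajectories near $q_0$ would be forced to accumulate in a way incompatible with the meromorphicity of $\Phi$ at $T_0$. For Siegel points, the uniform per-passage bound of Proposition~\ref{siegeldomain} combined with the infinitely many passages needed to produce an infinite length should prevent the associated endpoints in the $T$-parameter from accumulating at a single $T_0$. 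Making these arguments rigorous by exploiting the additional structure of $\calh$ and $\calh^{\perp}$ highlighted in the Introduction is the delicate core of the proof.
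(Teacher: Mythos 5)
Your set-up coincides with the paper's: argue by contradiction, note that the absence of sinks makes every $\calh^{\theta}$-trajectory infinite, apply Theorem~\ref{maintheo} to get $\int_{l_q}dT<\infty$, and use parabolicity (Lemma~\ref{Cstar1.1} plus Brunella) to conclude that the maximal domain is ${\bf U}=\C\setminus\{t_0\}$ with the lifted trajectory converging to $t_0$. The divergence, and the gap, is in your last step. First, a technical point: $T_0$ lies on the boundary of the maximal domain, so $\Phi(T_0)$ is not defined; the trajectory $l_q^{\theta}$ has infinite length in the leaf and need not converge to any single point of $M$ (it only converges in the time coordinate), so you cannot speak of "the" singular point $q_0$ it reaches. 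Second, and more seriously, your plan to derive the contradiction by showing that such a limit "must be a sink" via local analysis at saddles, sources and Siegel points cannot succeed: Proposition~\ref{siegeldomain} establishes precisely that a trajectory may pass through a Siegel neighborhood infinitely many times while the total time $\int dT$ converges, so a finite time-endpoint at a non-sink singularity is perfectly consistent with all the local information. No local argument at a single singular point will produce the contradiction.

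The paper's actual contradiction is global and lives entirely in the time plane. Since every $\calh$-trajectory is infinite with convergent time-integral, \emph{all} trajectories of the lifted foliation $\calh_t$ on $\C\setminus\{t_0\}$ converge to the single "improper sink" $t_0$. On the other hand, each intersection of $\calp_\infty(L_q)$ with the line at infinity $\Delta_{\infty}^{(x,y)}$ is a source for $\calh_t$ with residue~$1$ for $\omega_1$ (Lemma~\ref{lineatinfinity}); the bundle of trajectories issued from each such source cuts out, on a fixed small loop $c$ about $t_0$ on which $\phi^{\ast}\omega_1$ is bounded, an arc over which $\int\phi^{\ast}\omega_1=-1$, and arcs coming from distinct sources are pairwise disjoint. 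One then shows there are infinitely many such sources (if there were finitely many, $\phi$ would extend over $\infty$ and Remmert--Stein would force $L_q$ into a separatrix of an affine singularity, a case handled separately), and infinitely many disjoint arcs each contributing $-1$ contradict the boundedness of $\phi^{\ast}\omega_1$ on a neighborhood of $c$. This residue-counting mechanism is the missing idea in your proposal.
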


In the sequel, we shall prove Theorem~B taking for grant Proposition~\ref{hyperbolicleaves}.
To this purpose, Proposition~\ref{hyperbolicleaves} can be summarized by saying that there is
a singularity $q \in \Delta_{\infty}$ of $\tilf$ all of whose eigenvalues $\lambda^q_1, \ldots, \lambda^q_n$ belong
to $\R_+^{\ast}$. This assertion can, in turn, slightly be improved. Indeed, consider the vector field $X_d$ and
local coordinates as in Section~3, where $q \simeq 0$ and $\{x_n = 0\}$ is contained in the hyperplane at infinity
of $M$ (and hence identified to the coordinate ``$z$'' in the mentioned section). In this local coordinates,
the vector field $\tXX_d$ is written in the form
\[
\tXX_d = QY_1 + {\rm h.o.t}
\]
where $Q$ is a rational function and $Y_1$ is a linear vector field with real positive eigenvalues $\lambda^q_1,
\ldots , \lambda^q_n$. Since we have seen that $X^d$ has a singular set of codimension at least~$2$, $Q$
is a rational function possessing a (unique) polar component of degree~$d-1$ passing through $q$ (and coinciding
with $\Delta_{\infty}$) and empty zero divisor. Thus the foliation associated to $QY_1$ must have a
smooth separatrix transverse to $\Delta_{\infty}$. By restricting the vector field to this separatrix the
semi-complete condition implies that $d$ must be equal to~$2$.
Furthermore, the local holonomy of the separatrix in question must be trivial (cf. for example \cite{guillotThesis}). Therefore
each of the eigenvalues $\lambda^q_1 ,\ldots , \lambda^q_{n-1}$ is a multiple of the eigenvalue $\lambda^q_n$.
Hence we can set $\lambda^q_n =1$ and $\lambda^q_1 , \ldots , \lambda^q_{n-1} \in \Z_+$. This refined statement will
lead us to

\begin{lema}
\label{Cstar3.3}
The set formed by the separatrices of $\cald$ at $q$ contains non-trivial open sets of $\C^n$.
\end{lema}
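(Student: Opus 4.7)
The plan is to show that the singularity at $q$ is holomorphically linearizable with a linear model whose orbits sweep out a full neighborhood of $q$, so that every nearby point lies on an analytic separatrix. First, by Proposition~\ref{hyperbolicleaves} together with the refinement stated just before the lemma, the eigenvalues $\lambda_1^q, \ldots, \lambda_n^q$ of $\tilf$ at $q$ lie in $\R_+^{\ast}$ and are mutually commensurable: either they belong to $\Z_+^{\ast}$, or (in the $d=2$ subcase) after normalizing $\lambda_n^q = 1$ they satisfy $\lambda_i^q \in \Z_+^{\ast}$ for $i<n$. In either case all eigenvalues are strictly positive rationals, so $q$ lies in the Poincaré domain and, since $\tilf$ is by hypothesis not of Poincar\'e-Dulac type at $q$, Poincar\'e's linearization theorem applies: there exist local holomorphic coordinates $(\xi_1, \ldots, \xi_n)$ around $q$ in $M$ in which $\tilf$ is the foliation induced by the linear vector field $Y_1 = \sum_{i=1}^n \lambda_i^q \xi_i \partial/\partial \xi_i$.

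Next I would describe the separatrices of this linear model explicitly. Writing $\lambda_i^q = N_i \mu$ with $N_i \in \Z_+^{\ast}$ and $\mu \in \R_+^{\ast}$, the monomials $\xi_i^{N_j}/\xi_j^{N_i}$ are first integrals of $Y_1$. Consequently, through each point $(\xi_1^0, \ldots, \xi_n^0)$ with all $\xi_i^0 \neq 0$ there passes the analytic curve $\{\xi_i^{N_j} = c_{ij} \xi_j^{N_i}\}$ which is an orbit of $Y_1$, parametrized by $t \mapsto (e^{N_1 \mu t}\xi_1^0, \ldots, e^{N_n \mu t}\xi_n^0)$. As $\mathrm{Re}(t) \to -\infty$ this orbit converges to $q$, so its closure is an irreducible analytic curve through $q$ invariant by $\tilf$, i.e., a separatrix. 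Adjoining the coordinate axes $\{\xi_i = 0,\ i \neq j\}$ to recover the degenerate orbits, one obtains a family of separatrices which, together, pass through every point of a punctured neighborhood of $q$. Hence the union of separatrices of $\tilf$ at $q$ contains a full open neighborhood of $q$ in $M$.

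Finally I would transfer this information back to $\C^n$. The separatrices of $\cald$ at $q \in \Delta_\infty$ correspond, via the identification of $M$ with the blow-up of $\C P(n)$ at the origin and the natural bi-meromorphism with $\C P(n)$ near $\Delta_\infty$, to the separatrices of $\tilf$ at the same point. Removing the proper transform of $\Delta_\infty$ (a single complex hyperplane) from the neighborhood produced above leaves an open set of $\C^n$ entirely covered by the restrictions to $\C^n$ of these separatrices. This proves the claim.

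I expect the only delicate point to be a careful verification that in the second (non-linearizable-looking) subcase where one separatrix is smooth transverse to $\Delta_\infty$ with trivial local holonomy, the foliation really is still linearizable in a full neighborhood of $q$ in $M$; the triviality of the holonomy and the integrality of the remaining eigenvalue ratios should make this routine, but one must make sure that the polar factor $Q$ from the preceding discussion does not obstruct the linearization of the underlying foliation $\tilf$.
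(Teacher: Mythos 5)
Your proof is correct and follows essentially the same route as the paper: positive integer eigenvalues (hence Poincar\'e domain), linearizability since the Poincar\'e--Dulac alternative is excluded by hypothesis, and the observation that the orbits of a diagonal linear vector field with positive integer eigenvalues close up into analytic separatrices sweeping out a full neighborhood. The ``delicate point'' you flag at the end is not an issue, since the linearization concerns the underlying foliation $\cald$ (equivalently a local holomorphic vector field with non-degenerate linear part representing it), on which the multiplicative factor $Q$ has no effect.
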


\begin{proof}
Consider the vector field $X$ (resp. foliation $\cald$) on a neighborhood of the singularity $q \in \Delta_{\infty}$.
The reader is recalled that the foliations $\cald$ and $\fol$ have the same singularities in $\Delta_{\infty}$ and each
of these (common) singularities have the same eigenvalues. Thus what precedes implies that $\cald$ has
$n$ eigenvalues $\lambda^q_1, \ldots , \lambda^q_n$ $q$ which are strictly positive integers.
Therefore they belong to the Poincar\'e domain so that the corresponding local vector field is either linearizable or
it admits a Poincar\'e-Dulac normal form. Since the latter possibility was ruled out by assumption, our local vector field
must be linearizable at $q$. The statement follows now from the fact that
$\{ \lambda^q_1, \ldots , \lambda^q_n \} \subset \Z_+$.
\end{proof}

\begin{proof}[Proof of Theorem~B]
Consider a local leaf of $\cald$ defining a separatrix $\mathcal{S}$ for $\cald$ at $q$ as above and not contained in
$\Delta_{\infty}$. Since $\mathcal{S}$ may be singular at $q$, we also consider a local irreducible Puiseux parametrization
$\gamma (t)$ for $\mathcal{S}$ where $t$ is defined on a neighborhood of $0 \in \C$ and $\gamma (0) =q$. Since $\mathcal{S}$
is invariant by $X$, we consider the pull-back $\gamma^{\ast} X$ by $\gamma$ of the restriction of $X$ to $\mathcal{S}$.
The irreducible character of $\gamma$ ensures that $\gamma^{\ast} X$ is holomorphic and semi-complete on a neighborhood
of $0 \in \C$. Therefore the order of $\gamma^{\ast} X$ at $0 \in \C$ belongs to the finite set $\{ 0, 1, 2\}$. Furthermore,
we can actually exclude the case in which this order equals zero since otherwise points of $L$ would reach $q \in \Delta_{\infty}$
in finite time and this contradicts the fact that $X$ is complete on $\C^n$.

Suppose now that the order of $\gamma^{\ast} X$ is~$2$. Modulo adding the point $q$ to the global leaf $L$, it follows that
$L$ is a Riemann surface equipped with a complete holomorphic vector field having a quadratic singularity. Thus $L$ must
be the Riemann sphere (recall that our definition of leaf allows a leaf to go through singular points).
This being valid for set of leaves of $\cald$ containing open sets
of $\C^n$ (cf. Lemma~\ref{Cstar3.3}), it follows that $\cald$ defines a completely integrable rational
foliation on $\C^n$.

Finally, consider the case where the order of $\gamma^{\ast} X$ at $0 \in \C$ equals~$1$. In this case, the time-form induced on
$L$ by $X$ has a non-trivial period. Hence $L$ cannot be an orbit of type
$\C$. Furthermore, $L$ cannot be an elliptic curve either since the restriction of $X$ to the (normalization of) of $L$
contains a singular point of the vector field $X$. Thus, Lemma~\ref{Cstar3.3} and Suzuki's theorem \cite{suzuki} can now
be combined to ensure that the generic orbit of
$X$ has type $\C^{\ast}$. A theorem due to Brunella then guarantees the existence of a non-constant first integral $R$
for $\cald$, cf. \cite{marco1}.

To conclude that $X$ is completely integrable (with rational leaves), we shall proceed by induction on
the dimension~$n$. Indeed, it will suffice to check the theorem for $n=3$ since the induction step will then become clear.
Therefore let us fix a point $p \in \C^n$ that is regular for $X$. Denote by $L_p$ the leaf of $\cald$ through $p$ and let $S$ stand
for the surface level of $R$ containing $p$.  The preceding argument also holds for the restriction of $X$ to $S$. In fact,
consider the foliation $\cald_S$ obtained on $S$ by restricting $\cald$.
The assumptions made about the structure of the singularities of $\cald$ lying in $\Delta_{\infty}$ automatically implies similar
assumptions for the singularities of $\cald_S$ lying in (the desingularization of) $S \cap \Delta_{\infty}$. In other words,
owing to the fact that Suzuki's results hold for Stein manifolds, we can assume
without loss of generality that the ``generic'' orbit of $X$ {\it restricted to the affine part of $S$}\, still is of type
$\C^{\ast}$. Similarly, the above mentioned theorem of Brunella also
holds for the restriction of $X$ to $S$. It means that $\cald_S$ also possesses
a non-constant rational first integral $R_2$. However, for $n=3$, $S$ is an algebraic surface of dimension~$2$. The existence
of $R_2$ then implies that the restriction of $\cald$ to $S$ is completely integrable. Thus the closure of $L_p$ is, in fact,
a (possibly singular) algebraic curve. Since the restriction of $X$ to this compact curve contains a singular point, this curve
must be rational (modulo normalization). Summarizing, we have still concluded that $L_p$ is a (possibly singular)
rational curve. The theorem follows for $n=3$. The general case can be settled with a simple induction argument.
\end{proof}

Let us then close this section with the proof of Proposition~\ref{hyperbolicleaves}. Given a generic leaf $L$
of $\tilf$, we need to show the existence of a singular point $q \in \Delta_{\infty}$ providing a sink singularity for
the restriction of $\calh$ to $L$. In turn, this amounts to find a singular point $q \in \Delta_{\infty}$ at which
$\tilf$ has non-zero eigenvalues $\lambda^q_1, \ldots, \lambda^q_n$ in $\R_+^{\ast}$. Finally, once this singular
point exists, $\cald$ must have the same set of eigenvalues at $q$ so that it will follow from the discussion in Lemma~\ref{Cstar3.3}
that the union of the local separatrices of $\cald$ at $p$ contains
non-empty open sets of $\C^n$. Note also that, since we know {\it a priori}\, that the singularities of $\fol, \, \cald$
at infinity are simple, then the converse also holds, namely:
if a singular point of $\tilf$ in $\Delta_{\infty}$ is such that
the union of all its separatrices contains open sets in $\C^n$, then the corresponding eigenvalues $\lambda_1, \ldots, \lambda_n$
of $\cald$ lie all in $\Z_+^{\ast}$.

The above discussion is connected with an issue appearing in the proof of Proposition~\ref{hyperbolicleaves} and
related to the definition of leaf given in Section~2.2. Recall that a leaf $L$ of $\tilf$ is allowed to contain singular
points of $\cald$ if certain conditions are satisfied. For our purposes, it will be enough to notice that
if a singular point of $\cald$ is ``added'' to a local branch $L$ of a leaf of $\cald$, then the branch in question
defines a local separatrix for $\cald$ at the singular point in question. In particular, if the union of all local
separatrices for singular points of $\cald$ has empty interior then a ``generic'' leaf $L$ of
$\cald$ contains no singular point of $\cald$ (the reader will also note that the singular points of $\cald$ away
from $\Delta_{\infty}$ need not be isolated).

Consider now a singular point $p$ of $\cald$ lying in the complement of $\Delta_{\infty}$. Denote by ${\rm Sep} \, (p)$
the union of all local separatrices of $\cald$ at $p$. Next set
$$
{\rm Sep} \, (\cald) = \bigcup_{p \in {\rm Sing}\, (\cald) \\ p \not\in \Delta_{\infty}}  {\rm Sep} \, (p) \, .
$$
Finally let $\mathcal{U} \subset \C^n$ be defined as the saturated of ${\rm Sep}\, (\cald)$ by the foliation $\cald$. Now, we have:

\begin{lema}
\label{addingonemorelemma}
If the interior of $\mathcal{U}$ is not empty, then $\cald$ is a completely integrable rational foliation.
\end{lema}

\begin{proof}
Let $p$ be a singularity as in the statement and consider a leaf $L$ of $\cald$ defining a local separatrix
for $\cald$ at $p$. Modulo choosing $L$ ``generic'', the restriction of $X$ to $L$ does not vanish identically.
Let $\gamma$ denote an irreducible Puiseaux parametrization of a branch of $L$ through $p$. Restricting $X$ to
$L$ and pulling the resulting restriction back by $\gamma$ yields a local vector field $\gamma^{\ast} X$ defined
about $0 \in \C$. However, since $X$ is holomorphic and semi-complete about $p$, it follows that the order
of the vector field $\gamma^{\ast} X$ at $0 \in \C$ equals either~$1$ or~$2$ (the order cannot vanish since $p$
is singular for $\cald$).

If the mentioned order equals~$2$, then the restriction of $X$ to the (global) leaf $L$ is a complete
vector field on a Riemann surface possessing a singular point of order~$2$. The leaf $L$ is hence a rational
curve. If this condition holds for a $\cald$-saturated set of $\C^n$ possessing non-empty interior, then
$\cald$ must be a completely integrable rational foliation.

Similarly, if the order of $\gamma^{\ast} X$ at $0 \in \C$ equals~$1$, then $L$ has a non-trivial period.
This condition being verified by a set of leaves with non-empty interior implies that the generic
leaf of $\cald$ is of type $C^{\ast}$ in the sense of Suzuki. In turn, Brunella's theorem \cite{marco1}
yields a non-constant rational first integral for $\cald$. The rest of the lemma then follows by the same induction
argument used in the proof of Theorem~B.
\end{proof}

\begin{proof}[Proof of Proposition~\ref{hyperbolicleaves}]
Let us suppose for a contradiction that the statement is false. In view of the material in Section~5, and given that
the singularities of $\cald$  are isolated inside $\Delta_{\infty}$, we conclude that the trajectories of $\calh$ have no
future endpoint once they are contained in a generic leaf of $\cald$. In particular all the corresponding trajectories
have infinite length. This condition will be exploited in the sequel.

Similarly, owing to Lemma~\ref{addingonemorelemma}, we can assume that a ``generic leaf'' of $\cald$ does not
contain any singular point of $\cald$. Besides, ``generic'' leaves are not contained in rational curves, otherwise
Theorem~B would follow at once (and the foliation $\calh$ would have
sink singularities when restricted to a generic leaf of $\tilf_{\infty}$).

Our strategy to obtain a contradiction will consist of showing that a ``generic'' leaf of $\tilf_{\infty}$ cannot be
a ``parabolic'' (i.e. non-hyperbolic) Riemann surface. For this, consider a leaf $L_{\infty} \subseteq \Delta_{\infty}$ and
points $p \in L_{\infty}$ and $q \in \mathcal{P}^{-1}_{\infty}(p)$ as in the statement of Theorem~\ref{maintheo}. Denote
by $L_q$ the leaf of $\tilf$ through $q$ so that $\calp_{\infty} (L_q) =L_{\infty}$. Besides, we can assume without
loss of generality that $L_q$ is a ``generic'' leaf of $\tilf$ in the above sense. Now, note that
$\tXX_d$ is regular over $L_q$. Since $\tXX_d$ is semi-complete, this leaf is recovered by an open set ${\bf U}
\subseteq \C$ which is the domain of definition of the semi-complete flow on $L_q$. In fact, the assumption of
having a semi-complete vector field is equivalent to saying that ${\bf U}$ is a maximal domain of definition for
the solution $\phi$ of the equation associated to this vector field with initial condition at $q$. It then follows that the lift
of $l_q$ in ${\bf U}$ has finite length for the natural euclidean metric of $\C$. Indeed, this length is nothing
but the integral $\int_{l_q} dT$. Hence the mentioned lift converges to a point $t_0$ in the boundary of ${\bf U}$.

Recalling that $L_q$ must be a non-hyperbolic Riemann surface, and noting that it cannot be contained in a rational
curve, it follows that ${\bf U} = \C \setminus \{ t_0\}$. This conclusion actually uses our assumption that $L_q$
contains no singular point of $\cald$. Otherwise, $L_q$ might contain also the point ``$t=\infty$'' what would open
the possibility of having ${\bf U}$ isomorphic to $\C$ minus two points.

In particular, $L_q$ cannot be compactified in an elliptic
curve. Thus $L_q$ is actually not contained in any compact curve. Consider now the plane $\C$ equipped with the
coordinate $t$ so that the canonical form $dt$ coincides with the pull-back by $\phi$ of the time-form induced
on $L_q$ by $X$. Also, denote by $\calh_t$ the lift of the foliation $\calh$ (restricted to $L_q$) to the $\C$-plane.
The foliation $\calh_t$ is naturally a foliation defined on ${\bf U} = \C \setminus \{ t_0\}$. Yet we have the following claim whose
meaning is fully explained below.

\vspace{0.1cm}

\noindent {\it Claim}. The point $t_0$ represents a ``sink singularity'' for $\calh_t$.

\noindent {\it Proof of the Claim}. Clearly $L_q$ possesses a cylindrical end. In other words, the map $\phi$ allows us to realize
a punctured neighborhood of $t_0 \in \C$ as an end of $L_q$. Next consider a point $q' = \phi (t')$ for $t'$ near $t_0$. The trajectory
of $\calh$ through $q'$ is infinite and thus the integral of the corresponding time-form over this trajectory must converge again to a point
lying in the boundary of ${\bf U} = \C \setminus \{ t_0\}$. We then conclude that the mentioned integral converges to $t_0$.
In the $\C$-plane equipped with the coordinate $t$, the preceding translates
into the fact that the integral of the canonical form $dt$ over the lift (by $\phi$) of the $\calh$-trajectory through $q'$ converges to $t_0$.
This lift, however, is nothing but the trajectory of $\calh_t$ through $t'$, it then follows that the trajectory in question converges to $t_0$
and this is the contents of the claim.\qed

\vspace{0.1cm}

The above proof needs further comments. As mentioned $\calh_t$ is the foliation induced on the plane $\C$ by the $1$-form
$\phi^{\ast} \omega_1$ which is meromorphic on ${\bf U} = \C \setminus \{ t_0\}$ but may have an essential
singular point at $t_0$. In our previous discussion, the behavior of $\calh$ was worked out on a neighborhood of points
that are zeros or poles for the $1$-form in question and this led to the singular points of type sink, source and saddle.
Yet, nothing was mentioned about the behavior of $\calh$ about
an essential singular point for the corresponding $1$-form. This is why the
expression sink singularity was written between quotes in the statement of the claim. The proof of the above claim, however,
shows that $t_0$ looks like a sink for $\calh_t$ from a topological viewpoint. In fact, the $\calh_t$-trajectories through
points ``close to $t_0$'' converge to $t_0$. To avoid confusion, this type of situation will be referred
to from now on as constituting an {\it improper sink}\,
for $\calh_t$. Thus, in the ``time coordinate'' $t$, ``improper sinks'' such as $t_0$ are viewed as ``topological sinks'' for
the foliation $\calh_t$ in the above mentioned. The reader will also note that, in this improper sink situation, whereas the
$\calh$-trajectories viewed in the coordinate $t$ appear to converge towards a point $t_0$, when these trajectories are viewed
as they were defined in the actual leaf $L_q$, they turn out to have infinite length.

Summarizing what precedes, the foliation $\calh_t$ on the plane $\C$ has a unique improper sink singularity, corresponding
to $t=t_0$, and no (ordinary) sink singularity. Furthermore all trajectories of $\calh_t$ converge to $t_0$. Indeed, the
integral of the time-form over every trajectory of $\calh$ converges to a point in the boundary of ${\bf U} = \C \setminus
\{ t_0\}$ and hence to $t_0$ itself.

To finish the proof of the proposition, we are going to show that the situation described above cannot happen. In fact, let
$t^{\ast}_1$ be a {\it source singularity of $\calh_t$}. Note that $t^{\ast}_1$ exists since this of type singularity is produced
by the intersections of $\calp_{\infty} (L_q)$ with the hyperplane at infinity $\Delta_{\infty}^{(x,y)}$ in the coordinates
$(x,y)$ (i.e. the hyperplane at infinity corresponding to affine coordinates for $\Delta_{\infty}$). Since $\Delta_{\infty}^{(x,y)}$
neither is invariant by $\tilf_{\infty}$ nor there are singularities of $\tilf_{\infty}$ lying in $\Delta_{\infty}^{(x,y)}$,
it follows that every leaf of $\tilf_{\infty}$ intersects non-trivially $\Delta_{\infty}^{(x,y)}$. Furthermore
these intersections produce then source singularities for $\calh_t$. These source singularities also have residue equal to~$1$.

Next, note that away from saddle-singularities of $\calh_t$, this foliation can be given a
structure of transverse riemannian foliation: just parameterize the trajectories of the orthogonal foliation
by the integral of $\phi^{\ast} \omega_1$. Thus there is a region $I_1^{\ast}$ on a small loop $c$ about
$t_0$ such that the integral of $\phi^{\ast} \omega_1$ over $I_1^{\ast}$ equals~$-1$ (the negative of the
residue of $\phi^{\ast} \omega_1$ at $t^{\ast}_1$). In fact, $I_1^{\ast}$ is obtained by the intersections
with $c$ of the trajectories emanated from $t^{\ast}_1$ (recalling that they all converge to $t_0$). However,
$c$ can be chosen so that $\phi^{\ast} \omega_1$ is bounded on a neighborhood of $c$ and hence the integral
of $\phi^{\ast} \omega_1$ over $c$ is well-defined. To derive a final contradiction, it suffices to observe
that in the complement of the small disc bounded by $c$, there are infinitely many source singularities
$t^{\ast}_1, t^{\ast}_2, \ldots$ for $\calh_t$. In fact, in the above construction, each of these singularities
determine as above a region $I_k^{\ast}$ over which the integral of $\phi^{\ast} \omega_1$ equals~$-1$.
Finally all these regions $I_1^{\ast}, I_2^{\ast}, \cdots$ are clearly pairwise disjoint what immediately
leads to a contradiction with the existence of a bound for $\phi^{\ast} \omega_1$ on a neighborhood of $c$.

Finally, it only remains to check the existence of infinitely many source singularities $t^{\ast}_1, t^{\ast}_2, \ldots$ in the
complement of the disc bounded by $c$ (and containing $t_0$). Let us then suppose that the number
of the mentioned source singularities is finite. It then follows that the solution $\phi$ is bounded
in the complement of a compact region of the plane $\C$. Hence this solution can be extended to a neighborhood
of the infinity in $\C$. Now the image of $\infty$ in $\C \cup \{ \infty \} \setminus \{ t_0 \}$ under the
extension of $\phi$ can be nothing but a singular point of the complete vector field $X$ on the affine $\C^3$.
Modulo using the standard  Remmert-Stein theorem, it follows that $L_q$ contains a local separatrix for an
affine singularity $p$ of $X$. In other words, a generic leaf of $\tilf$ defines a local separatrix of $\tilf$
at one of its singular points: this is impossible since we are treating the case where a generic leaf of $\tilf$
does not define a local separatrix for any of the singularities of $\tilf$ (the case where this situation takes
place having already been settled). This finishes our proof.
\end{proof}

\begin{obs}
\label{tobeincluded}
{\rm Throughout this paper, the affine coordinates $(x,y,z)$ (as well as their higher dimensional versions) used in the
construction of $M$ where generically chosen in the sense that $\Delta_{\infty}^{(x,y)}$ remained away from the singularities
of $\tilf_{\infty}$. In particular, in the context of Theorem~B, this was useful to establish Proposition~\ref{hyperbolicleaves}
guaranteing the existence of a singular point all of whose eigenvalues are strictly positive. This argument will be repeated
in the context of Theorem~C to be proved in Section~7.1. The point we want to make here is that further information, such as
the existence of a second singularity with similar property, may be obtained by choosing the coordinates $(x,y,z)$ so
as to make $\Delta_{\infty}^{(x,y)}$ to pass through the first singularity having only strictly positive eigenvalues.

The above mentioned choice of coordinates $(x,y,z)$ is particularly useful because, under very general conditions,
the ``source character'' of points lying in $\Delta_{\infty}^{(x,y)}$ may cancel out the ``sink character'' of the
singularity in question. In other words, we can avoid the singularity fixed in the beginning of producing ``future endpoints''
for the trajectories of $\calh$. If we know that the leaves of $\tilf_{\infty}$ are parabolic Riemann surfaces, it then
follows the existence of some other point in $\Delta_{\infty}$ yielding a sink singularity for $\calh$}
\end{obs}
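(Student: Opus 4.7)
The plan is to modify the affine chart so that its line at infinity contains the first dicritical singularity $q_1$ produced by Proposition~\ref{hyperbolicleaves}, track how the abelian form $\omega_1$ and the real foliation $\calh$ transform under this non-generic choice, show that the ``source contribution'' carried by the new line at infinity cancels the ``sink contribution'' at $q_1$, and finally re-run the argument of Proposition~\ref{hyperbolicleaves} verbatim in the new chart to produce a second singularity $q_2 \ne q_1$ of $\tilf_{\infty}$ whose eigenvalues all lie in $\R_+^{\ast}$.

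First I would fix a linear form $\ell$ on $\Delta_{\infty}\simeq \CP(n-1)$ with $\ell(q_1)=0$ and $\{\ell=0\}$ transverse to both local separatrices of $\tilf_{\infty}$ at $q_1$ (the generic condition), and then work in new affine coordinates whose line at infinity is $\{\ell=0\}$. Because the normal bundle of $\Delta_{\infty}$ in $M$ is $\mathcal{O}(1)$, the compatible transverse coordinate in the new chart is $\tilde z = z/\ell$, and taking logarithmic derivatives on each leaf of $\tilf_{\infty}$ yields the transformation rule
\[
\omega_1^{\text{new}} = \omega_1^{\text{old}} + d\log \ell.
\]
In linearizing coordinates at $q_1$ with $\tilf_{\infty} = \lambda_1 x\partial_x + \lambda_2 y\partial_y$, the old residues on the two separatrices are $-H(0,0)/\lambda_i$ (cf.\ Lemma~\ref{hreal} and Lemma~\ref{quotientinR+}); the semi-completeness bound of Lemma~\ref{hreal} gives $H(0,0)/\lambda_i \in (0,1] \cap \Q$, and the transversality of $\{\ell=0\}$ to each separatrix makes $d\log\ell$ contribute a residue $+1$ on each. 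Hence the new residues equal $1 - H(0,0)/\lambda_i \ge 0$, turning $q_1$ into a source (strict inequality, the generic case) or an avoidable regular/saddle point (equality); in either case $q_1$ is no longer a future endpoint for trajectories of $\calh^{\text{new}}$, the saddle case being handled by the perturbation mechanism of Remark~\ref{rem}.

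The last step is to apply the proof of Proposition~\ref{hyperbolicleaves} to $\omega_1^{\text{new}}$ and $\calh^{\text{new}}$. The parabolic character of the leaves of $\tilf_{\infty}$ (the hypothesis of the remark, automatic in the Theorem~B setting by Lemma~\ref{Cstar1.1}) combined with the semi-completeness of $\tXX$ forces the integral of the time-form over any infinite $\calh^{\text{new}}$-trajectory to converge to a boundary point of the maximal domain of definition in the time plane, producing an ``improper sink'' whose source/residue counting around the new line at infinity $\{\ell=0\}$ forces, as in Proposition~\ref{hyperbolicleaves}, the existence of a genuine sink of $\calh^{\text{new}}$ at some point $q_2 \in \Delta_{\infty}$; the previous step excludes $q_2 = q_1$, so $q_2$ is a new singularity of $\tilf_{\infty}$. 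Reapplying Lemma~\ref{quotientinR+} at $q_2$, together with the Poincar\'e-domain analysis of Lemma~\ref{Cstar3.3}, yields $H(0,0)/\lambda_j^{q_2} \in \R_+^{\ast}$ for every eigenvalue at $q_2$, exhibiting it as the second singularity with all eigenvalues in $\R_+^{\ast}$. The main obstacle I foresee is the careful bookkeeping of residues on non-separatrix leaves accumulating at $q_1$ under the node dynamics (the rational resonance $\lambda_2/\lambda_1 \in \Q_+$ produces multi-branched closing leaves whose residues must still agree with the uniform shift $+1$ supplied by $d\log\ell$), together with ensuring that the additional sources introduced along the entire curve $\{\ell = 0\}$ do not spuriously absorb the trajectories producing the improper sink; sufficient genericity of $\ell$ — transverse to $\tilf_{\infty}$ away from $q_1$ and avoiding all other singularities of $\tilf_{\infty}$ — is what controls these concerns.
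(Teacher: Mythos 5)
The statement you are proving is only a remark in the paper, asserted with explicit hedges (``under very general conditions'', ``may cancel out'') and no formal proof; your proposal implements exactly the strategy the remark sketches, and your key computation --- that the bundle-compatible change of transverse coordinate $\tilde z = z/\ell$ gives $\omega_1^{\mathrm{new}} = \omega_1^{\mathrm{old}} + d\log\ell$, shifting the separatrix residues from $-H(0,0)/\lambda_i$ to $1 - H(0,0)/\lambda_i \ge 0$ by the semi-completeness bound of Lemma~\ref{hreal}, so that $q_1$ stops being a future endpoint and the argument of Proposition~\ref{hyperbolicleaves} can be rerun --- is correct and consistent with Lemma~\ref{lineatinfinity}. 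The delicate points you flag at the end (residues on the multi-branched leaves through the dicritical point $q_1$, and the compactness/transversality control of Lemma~\ref{remainingcompact} for the non-generic line) are precisely the ``very general conditions'' the paper itself leaves implicit, so your write-up is an honest and faithful filling-in of the paper's own (unwritten) argument.
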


\section{Theorem~C and Halphen vector fields}

This section contains the proof of Theorem~C along with a discussion of some of the results obtained in \cite{guillotIHES}
with proofs obtained through the methods developed in the course of this work. The results presented here about
Halphen vector fields are not new and often less sharp than the theorems due to A. Guillot so that they can
be regarded simply as further illustrations of our techniques. Nonetheless the setting introduced here makes
sense for much more general vector fields transverse to singular fibrations and conceivable similar ideas can be applied to
certain Painlev\'e equations as well as to other classical equations including several Chazy's equations.

\subsection{Guillot's lattices of quadratic vector fields : examples and results}

This paragraph concerns an application of our techniques to the work of A. Guillot in \cite{guillotFourier}, \cite{guillotIHES}
(cf. also \cite{guillotThesis} for an introduction to both papers). Let us place ourselves in the context of \cite{guillotFourier}.
Hence $X^2$ stands for a semi-complete homogeneous quadratic vector field with isolated singularities on $\C^n$. We assume
the foliation $\tilf$ associated to $X^2$ on $M$ to leave $\Delta_{\infty}$ (resp. $\Delta_0$) invariant. The pole divisor
(resp. zero divisor) of the lift to $M$ of $X^2$ consists of $\Delta_{\infty}$ (resp. $\Delta_0$) with multiplicity exactly~$1$.
Finally we suppose that $\tilf$ has exactly $2^n -1$ singularities on $\Delta_{\infty}$ (resp. $\Delta_0$) and all of them possess
$n$ eigenvalues different from {\it zero}.

Let us denote by $p_1, \ldots , p_{2^n -1}$ (resp. $q_1, \ldots , q_{2^n -1}$) the singularities of $\tilf$ in $\Delta_0$ (resp. their dual
singularities in $\Delta_{\infty}$). Following \cite{guillotFourier}, \cite{guillotThesis} convention, the eigenvalues of
$\tilf$ at $p_i$ are $1, u_1^i , \ldots , u_{n-1}^i$ where~$1$ is the eigenvalue corresponding to the radial direction. According to Guillot,
for a semi-complete vector field $X^2$ as above, $u_1^i, \ldots , u_n^i$ are all integers. Besides setting $\xi_i = u_1^i \times \cdots \times u_{n-1}^i$,
$X^2$ yields a solution for the equation
\begin{equation}
\sum_{i=1}^{2^n-1} \frac{1}{\xi_i} = (-1)^{n+1} \, . \label{egyptianfraction}
\end{equation}
In particular the problem of precisely classifying vector fields as above is naturally related to these egyptian fractions. In general this problem is
quite intricate as attested by the multitude of interesting examples presented in the above mentioned works. Our contribution to this problem begins
with the following lemma:

\begin{lema}
\label{Guillotlattice1.1}
The leaves of a vector field in Guillot lattice having no dicritical singularity at infinity are hyperbolic Riemann surfaces
(where by dicritical it is meant that all the $n$ eigenvalues of the singularity are positive real).
\end{lema}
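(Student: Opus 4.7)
The plan is to argue by contradiction: suppose a generic leaf $L$ of $\tilf$ were not hyperbolic, and derive the existence of a dicritical singularity at infinity. Since $X^2$ is homogeneous, the projection $\calp_\infty$ realizes $L$ as an Abelian covering of a leaf $L_\infty$ of $\tilf_\infty$ (cf.\ the two observations at the end of Section~\ref{sechom}), so $L_\infty$ is itself non-hyperbolic, that is, either parabolic or rational.

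First, I would invoke Proposition~\ref{hyperbolicleaves}. Its proof uses only the semi-completeness of $X^2$ together with Theorem~\ref{maintheo} and is local around singularities of $\tilf_\infty$, so it transfers verbatim to the present quadratic semi-complete context: it delivers a singular point $q \in \Delta_\infty$ of $\tilf$ producing a sink singularity for $\calh$ restricted to a generic leaf of $\tilf_\infty$. In the case where $L_\infty$ is rational, the parenthetical remark in that proof already supplies the same conclusion without further work, since $\tilf_\infty$ then defines a rational pencil and $\calh$ acquires sink singularities automatically.

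The main step is to run at~$q$ the eigenvalue analysis carried out just after the proof of Proposition~\ref{hyperbolicleaves} (and subsequently exploited in the proof of Theorem~B). Write $\tXX = Q \cdot Y_1$ on a neighborhood of $q$, with $Y_1$ a linear vector field whose eigenvalues $\lambda_1^q, \ldots, \lambda_n^q$ at $q$ are, by that analysis, real and strictly positive. Because $X^2$ is quadratic, the factor $Q$ has $\Delta_\infty$ as its unique polar component, of multiplicity one, and empty zero divisor; as in the discussion preceding Lemma~\ref{Cstar3.3}, the semi-complete condition then forces $\lambda_n^q = 1$ together with $\lambda_1^q, \ldots, \lambda_{n-1}^q \in \Z_+$. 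Consequently every eigenvalue of $\tilf$ at $q$ lies in $\R_+^\ast$, which is precisely what it means for $q$ to be a dicritical singularity at infinity; this contradicts the hypothesis and completes the argument.

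The hardest part, in practice, is checking that the tools of the preceding subsection---designed for \emph{complete} polynomial vector fields on $\C^n$---transfer intact to the \emph{semi-complete} quadratic setting. I expect this to pose no genuine obstacle, because those tools rest solely on the singular-foliation machinery of Sections~3--5, in particular on Theorem~\ref{maintheo}, which was itself developed for arbitrary homogeneous semi-complete vector fields, and on the trivialization $\textsc{P}^\ast \equiv 1$ granted by Corollary~\ref{trivializingP}. The one genuinely new verification is that a sink singularity of $\calh$ with all eigenvalues in $\R_+^\ast$ coincides with the notion of ``dicritical singularity at infinity'' used in the statement, which is immediate from the definition recalled in the hypothesis.
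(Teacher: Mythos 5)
Your argument is correct and follows essentially the same route as the paper: the paper's proof consists precisely of the observation that the absence of dicritical singularities forces all $\calh$, $\calh^{\theta}$-trajectories to have infinite length, whence Theorem~\ref{maintheo} and the argument of Proposition~\ref{hyperbolicleaves} yield hyperbolicity of the leaves. You have simply run the same implication contrapositively (parabolic leaves $\Rightarrow$ sink for $\calh$ $\Rightarrow$ dicritical singularity at infinity), and your closing verification that a sink of $\calh$ at a nondegenerate singularity is exactly a dicritical point is the same identification the paper records in the summary immediately following Proposition~\ref{hyperbolicleaves}.
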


\begin{proof}
If there is no dicritical singularity in $\Delta_{\infty}$, there cannot exist a future endpoint for the trajectories of
$\calh, \, \calh^{\theta}$, $\theta \in (-\pi/2, \pi/2)$ for a ``generic'' leaf. In other words, the trajectories of the
foliations $\mathcal{H}^{\theta}$ must be infinite. Thus we can apply Theorem~\ref{maintheo} and the argument of
Proposition~\ref{hyperbolicleaves} to conclude the statement.
\end{proof}

The converse of Lemma~\ref{Guillotlattice1.1} does not hold. A particularly interesting example being provided by Halphen
vector fields (defined on $\C^3$). The dynamics and geometry of these vector fields are beautifully described
in \cite{guillotIHES}, the reader is referred to this paper for background information. From our point of view, these vector
fields provide an interesting example illustrating some aspects of our discussion.

First we note that Halphen vector fields possess dicritical singularities in $\Delta_{\infty}$ (as well as in $\Delta_0$). Yet,
under natural additional conditions, they are semi-complete and possess leaves that are isomorphic (as Riemann surface)
to the unit disc. This deserves some further comments. It was
seen that the existence of dicritical singularities is a necessary condition for the leaves of $\fol, \, \tilf$ to be non-hyperbolic
Riemann surfaces. Indeed a more precise statement holds: in order to have non-hyperbolic leaves, ``most'' $\calh$ trajectories induced
on the leaves of $\fol_{\infty}$ must be of finite length. To have finite length implies that the leaf must have both ``future and
past'' ends and, in the present context, the existence of ``future ends'' implies the existence of dicritical singular points.
In the case of Halphen vector fields, it can be shown by using the ideas of \cite{guillotIHES} that the trajectories of $\calh$
are, indeed, finite and that they tile the corresponding leaves of $\fol$. This might suggest that the leaves of $\fol$
ought to be ``parabolic Riemann surfaces'' what is not the case. Explanation for their hyperbolic character is provided
in \cite{guillotIHES} by carefully exploiting the intimate connection between these equations and the Lie algebra of
${\rm PSL}\, (2, \C)$. Whereas we can hardly improve on \cite{guillotIHES}, we shall provide an explanation
of this fact based on the ideas introduced in this paper. Nonetheless, let us first establish Theorem~C.

\begin{proof}[Proof of Theorem~C]
The fact that the singularity produced by a vector field of special type having no dicritical singularity at infinity cannot
be realized in a K\"ahler manifold follows from the combination of Lemma~\ref{Guillotlattice1.1} with Brunella's theorem as
used in the proof of Lemma~\ref{Cstar1.1}. In fact, if there were a K\"ahler manifold equipped with a holomorphic vector field
$X$ exhibiting this type of singularity at a point $p$, the blow-up of $X$ at $p$ would endow the exceptional divisor with a
foliation whose leaves are hyperbolic Riemann surfaces. Since the remaining leaves associated to the orbits of $X$ must be
parabolic, because $X$ is complete as a globally defined vector field on a compact manifold, a contradiction follows.

The same argument applies to the case of (hyperbolic) Halphen vector fields since these vector fields are known to have
leaves that are isomorphic to the unit disc, cf. \cite{guillotIHES} or the discussion carried out below.

Consider now the dual singularity of vector field of special type having no dicritical singularity at infinity. This means
that we have a neighborhood of $\Delta_{\infty}$ embedded in a manifold $N$. The argument of Theorem~\ref{maintheo} that an
orbit of the vector field entering this neighborhood will contain trajectories converging to the pole locus and such that
the integral of the corresponding time-form is finite. This contradicts the completeness of the vector field in the complement
of its pole locus. Again the argument for Halphen vector fields is totally analogous.
\end{proof}

\subsection{Poincar\'e-type series and Halphen vector fields}

As a matter of fact, Halphen vector fields constitute a particularly remarkable example of semi-complete vector field
belonging to Guillot's lattice whose geometry and dynamics is nicely described in \cite{guillotIHES}. As mentioned
in this section we are going to discuss some aspects of these vector fields from an alternate point of view. Needless to
say that the reader is referred to \cite{guillotIHES} for a fuller discussion.

In the sequel, we shall work on $\C^3$. Let $\cale$ be the radial vector field
$\cale = x\partial /\partial x + y \partial /\partial y + z
\partial /\partial z$ and set $Y = \partial /\partial x +  \partial /\partial y + \partial /\partial z$.
A quadratic homogeneous vector field $X$ is said to be a Halphen vector field if it satisfies $[Y,X] = 2\cale$. It then
follows that the triplet $Y, \cale, X$ form a Lie algebra isomorphic to the Lie algebra of ${\rm PSL}\, (2, \C)$. If $Y,X$
and $\cale$ are identified to the vector fields they induce on $M$ as in Sections~3 and~4 and expressed in the corresponding
coordinates $(x,y,z)$, then $\cale$ becomes $z^{-1} \partial /\partial z$. In other words $X,Y$ commute up
to a vertical vector field. Thus $X$ preserves the projection on $\Delta_{\infty}$ of the foliation induced by $Y$. In other
words, the foliation $\fol_{\infty}^X$ induced by $X$ on $\Delta_{\infty}$ is transverse to the foliation induced by $Y$ on
$\Delta_{\infty}$. Since $Y$ was a constant vector field, the foliation it induced on $\Delta_{\infty}$ is simply a linear pencil
of rational curves. Summarizing $\fol_{\infty}^X$ is transverse to a linear pencil of rational curves.

Once the above observation is made, it is easy to work out the structure of $\fol_{\infty}^X$. It leaves exactly $3$ projective lines $C_1, C_2, C_3$
invariant and these $3$ lines intersect mutually at a radial singularity $P \in \Delta_{\infty}$. Indeed the eigenvalues of $\fol_{\infty}^X$ at $P$ are $1,1$ whereas the
eigenvalues of $\fol^X$ (the foliation induced by $X$ on all of $M$) at $P$ are $1,1,-1$ (the $-1$ eigenvalue being associated to the direction ``$z$'').
Also, if $P' \in \Delta_0$ is the singularity of $\fol^X$ ``dual'' to $P$, then the eigenvalues of $\fol^X$ at $P'$ are $1,1,1$. For $i=1,2,3$, let $p_i, q_i$ denote the
remaining two singularities of $\fol^X$ over $C_i$. We assume that $X$ is semi-complete though this is not really indispensable
in what follows. According to Halphen's results revisited by Guillot, it easily follows that the eigenvalues of $\fol^X$ at $p_i$ (resp. $q_i$)
have the form $-1,-1, m_i$ (resp. $-1, -1, -m_i$), with $m_i \in \N^{\ast}$. The converse also holds though it is harder to prove, see  \cite{guillotIHES}.
In any event it is also easy to check that $\fol^X$ is locally linearizable about $p_i$ (resp. $q_i$). Finally  note that the
convention used above concerning the order of the eigenvalues of $\fol^X$ at $p_i$ (resp. $q_i$) is such that
the first eigenvalue corresponds to the vertical direction ``$z$'', the second to the curve
$C_i$ and the third to a direction contained in $\Delta_{\infty}$ and transverse to $C_i$. This convention is slightly different from \cite{guillotIHES} since the eigenvalue associated to
the direction ``$z$'' is denoted by $-1$ rather than $1$. This change of sign is due to the fact that we consider singularities in $\Delta_{\infty}$ whereas
Guillot considers singularities in $\Delta_0$.

The dynamics of $\fol_{\infty}^X$ is fully encoded in its global holonomy group with respect to a fixed line $\overline{C}$ in the above
mentioned linear pencil that is transverse to $\fol_{\infty}^X$. The preceding also shows that this holonomy group coincides with the subgroup of ${\rm PSL}\, (2, \C)$ generated
by three elements $\xi_1, \xi_2, \xi_3$ which are associated to the local holonomy of each of the three invariant lines. In particular we have
$$
\xi_1 \xi_2 \xi_3 = \xi_1^{m_1} = \xi_2^{m_2}= \xi_3^{m_3} = {\rm id} \, .
$$
In other words, it is a triangle group whose dynamics on $S^2$ is well known: provided that
\begin{equation}
\frac{1}{m_1} + \frac{1}{m_2} + \frac{1}{m_3}  < 1 \, , \label{stillacrossreference}
\end{equation}
this group is conjugate to a subgroup of ${\rm PSL}\, (2, \C)$ and thus it leaves a circle $\Lambda_{\infty} \subset S^2
\simeq \overline{C}$ invariant. Besides each connected
component of $\overline{C} \setminus \Lambda_{\infty}$ is invariant by the action. In fact, on these components
the action is properly discontinuous whereas it is minimal when restricted to the circle $\Lambda_{\infty}$ itself. Similarly is the case
where $1/m_1 + 1/m_2 +1/m_3 =1$, the resulting groups are well-known groups of affine diffeomorphisms associated to special
tiles of the plane. When $1/m_1 + 1/m_2 +1/m_3 >1$ the resulting group is indeed finite and thus it is easy to see that all leaves are
compact.

\begin{obs}
{\rm A note of caution in what precedes concerns the fact the quasi-isometric type of the holonomy group does not {\it a priori}\, determine the quasi-isometric
type of the leaves of $\fol^X$ since the latter are not everywhere transverse to the associated fibration: besides the existence of singularities, there are $3$ fibers
of this fibration that are invariant under $\fol^X$. In particular it is not clear that the leaves of $\fol^X$ must be hyperbolic once Estimate~(\ref{stillacrossreference})
is verified.}
\end{obs}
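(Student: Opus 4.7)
The plan is to make precise the subtlety pointed out in the remark by identifying where the naive ``leaves as covers of a transversal'' argument breaks down, and to indicate what additional input is needed to still conclude hyperbolicity. Away from the invariant curves $C_1, C_2, C_3$ the foliation $\fol^X$ is transverse to the linear pencil spanned by $Y$; fixing a generic fiber $\overline{C}$ as transversal, the monodromy representation
\[
\rho: \pi_1(\overline{C} \setminus \{a_1, a_2, a_3\}) \to {\rm PSL}\, (2,\C),
\]
where $a_i = C_i \cap \overline{C}$, has image the triangle group $\Gamma = \langle \xi_1, \xi_2, \xi_3 \rangle$. Under the assumption $1/m_1 + 1/m_2 + 1/m_3 < 1$, the group $\Gamma$ is Fuchsian and preserves a disc $D \subset S^2$ whose quotient $D/\Gamma$ is the hyperbolic triangle orbifold with cone points of orders $m_1, m_2, m_3$. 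The tempting argument is that a leaf $L$ of $\fol^X$ is a cover of $\overline{C} \setminus \{a_1, a_2, a_3\}$ whose universal cover develops into $D$, forcing $L$ to be hyperbolic.

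The first step in justifying the caution is to observe that $C_1, C_2, C_3$ are themselves leaves of $\fol^X$, each contained in a single (singular) fiber of the pencil, so for these three leaves the transverse argument yields no information whatsoever. The second, more serious step is to describe the behavior of a generic leaf $L$ at its intersections with the invariant curves $C_i$. Using the explicit local form of $\fol^X$ near $C_i$ --- the singularities $p_i, q_i$ are linearizable with local eigenvalues $-1, -1, \pm m_i$ --- one checks that the pencil projection restricted to $L$ is ramified of order exactly $m_i$ at each point of $L \cap C_i$. Consequently $L$ is not an ordinary cover of $\overline{C} \setminus \{a_1, a_2, a_3\}$ but the underlying surface of an orbifold cover of $(D/\Gamma; m_1, m_2, m_3)$, with ramification indices matching the cone orders.

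To upgrade this to an actual hyperbolicity statement one would need to verify that the developing map $\widetilde{L} \to S^2$ takes values in $D$ rather than in the complement $S^2 \setminus D$, and this is where the main obstacle lies. Local transverse data alone cannot rule out a leaf ``escaping through'' the limit circle $\Lambda_\infty$, since such behavior is perfectly compatible with the ramification analysis above. The resolution --- which we carry out in the remainder of Section~7 --- must invoke the semi-complete structure of $X$ together with the control on trajectories of $\calh$ provided by Theorem~\ref{maintheo}: trajectories of the foliation orthogonal to $\calh$ on $L$, lifted via the developing map, are shown to remain confined to $D$, forcing global confinement of the developing map and hence the hyperbolic character of the leaves, in agreement with \cite{guillotIHES}.
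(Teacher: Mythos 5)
This item is a cautionary \emph{remark}, not a theorem, and the paper offers no proof of it; what you have written is an elaboration of why the caution is warranted together with a sketch of how the difficulty is to be overcome. The first part of your elaboration is accurate and consistent with the paper's setup: the three lines $C_1,C_2,C_3$ are simultaneously fibers of the pencil and invariant curves of $\fol^X$, so the transversality argument says nothing about them, and at the intersections of a generic leaf with these curves the pencil projection ramifies, so that the projected leaf $L_{\infty}$ is only an \emph{orbifold} cover of the thrice-marked sphere (this is exactly how the paper later describes $L_{\infty}$, as the universal cover of $\CP(1)$ minus three points with the ramification points added back).

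Where your proposal goes wrong is in the mechanism you announce for resolving the difficulty. You propose to confine the developing map to the disc $D$ by ``invoking the semi-complete structure of $X$ together with the control on trajectories of $\calh$ provided by Theorem~\ref{maintheo}.'' Theorem~\ref{maintheo} is inapplicable here: its hypothesis is the existence of an $\calh^{\theta}$-trajectory of \emph{infinite} length, and the paper stresses (in Section~7.1, just before the proof of Theorem~C) that for hyperbolic Halphen vector fields all $\calh$-trajectories are \emph{finite} and tile the leaves --- which is precisely why these vector fields are delicate and why one might wrongly expect their leaves to be parabolic. The paper's actual resolution is quantitative and lives in the fiber direction of $\Ff_1$, not in a developing map of the base: Proposition~\ref{hyperbolic-vs-entire} shows that the conformal type of a leaf $L$ through $(w_0,z_0)$ is governed by the reduced Poincar\'e series $\sum_j \Vert \gamma_j'(w_0)\Vert^{1/2}$ along geodesic rays of the Cayley graph of $\Gamma$ (the exponent $1/2$ coming from the contraction factors $k_i^{-1}B_i\sqrt{\xi_i'(w)}$ of the extended action of $\Gamma$ on $\Ff_1$, normalized via Formula~(\ref{product=1})), and Lemma~\ref{alwaysconverging} proves convergence of this series off the limit set $\Lambda_{\infty}$, whence hyperbolicity. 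Your qualitative confinement argument is closer in spirit to Guillot's original approach in \cite{guillotIHES} than to this paper's, but as stated it rests on a theorem whose hypothesis fails in exactly the case at hand.
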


A similar picture is valid for the foliation $\fol^X$ associated to $X$ on $M$. Clearly $\fol^X$ is transverse to the
codimension~$1$ foliation defined by the
``cone'' over the leaves of $\fol_{\infty}^Y$ and its dynamics is also encoded
in its corresponding holonomy group. This is still generated by the local holonomy {\it with respect to $M$}\, of each of the mentioned three invariant lines.
Each of the three generators is now realized as an automorphism $\Xi_i$ of $\Ff_1$, the line bundle over $\C P(1)$ with Chern class equal to~$1$. In our context
this line bundle is the cylinder over $\overline{C} \subset \Delta_{\infty}$. Besides $\Ff_1$ can be obtained by gluing together two copies of $\C \times \C$
with coordinates $(w,z)$ and $(w', z')$ according to the equation $(w', z') = (1/w , wz)$. The coordinate ``$z$''
of the first copy can be identified with the previous affine coordinate ``$z$'' for $\C^3$. The automorphism $\Xi_i$ fixes the null-section and thus it
can be expressed in the mentioned coordinates as $\Xi_i (w,z) = (\xi_i (w) , B_i(w) z)$
where $\xi_i (w)$ is a homography. Furthermore it is also know that $\Xi_i^{m_i} = {\rm id}$ on $\Ff_1$ and $\xi_i^{m_i} = {\rm id}$ on $\overline{C}$. Thus
\begin{equation}
B_i(w) \times B_i( \xi_i (w)) \times \cdots \times B_i (\xi_i^{m_i-1} (w)) = 1 \, . \label{product=1}
\end{equation}

Next recall that the M\"obius group has a natural extended action to $\Ff_1$ consisting of multiplying vectors in the fibers by the square root of its derivative.
In other words, if $\xi$ is a homography, then its extended action on a pair $(w,z) \in \Ff_1$ is simply
$$
\xi . (w,z) = (\xi (w) , \sqrt{\xi' (w)} \, z ) \, .
$$
It is to be noted that the square root of the derivative of a homography is well-defined so that the claim follows from observing that
$\Ff_1 \otimes \Ff_1$ is isomorphic to the tangent bundle of $\overline{C}$. In particular a more explicitly expression for $\Xi_i$ can be derived
as follows. Let $q_i, q_{i+1}$ ($\neq 0, \infty$) denote the two fixed points of $\xi_i$ in $\overline{C}$. The transformation
$$
\sigma_i (w) = \frac{w-q_i}{w-q_{i+1}} \; \; \; \;  ; \; \; \; \; \sigma_i^{-1} (w) =\frac{q_{i+1} w -q_i}{w-1}
$$
conjugates $\xi_i$ to a homography fixing $0, \infty$. In this coordinate $\overline{w}$, $\xi_i$ must take on the form
$\overline{w} \mapsto k_i^2 \overline{w}$ where $k_i^2 = e^{2\pi \sqrt{-1}/m_i}$ since $\xi_i^{m_i} = {\rm id}$ on $\overline{C}$. Furthermore, in the coordinate
$\overline{w}$ it is clear that $B_i (\overline{w})$ must be constant so as to allow $\Xi_i$ to have a holomorphic extension to a fibered neighborhood of
$\infty$. Setting $B_i (\overline{w}) =B_i$ for this particular choice of coordinates, it follows that the expression of $\Xi_i$ in the initial coordinate $w$ is given by
\begin{eqnarray*}
\Xi_i (w,z) &=& (\xi_i (w) , B_i (w) z) = \left( \sigma_i^{-1} (k_i^2 \sigma_i (w)) , \sqrt{(\sigma_i^{-1})'\vert_{k_i^2 \sigma_i (w)}} B_i \sqrt{\sigma_i' (w)} z \right) \\
& = & \left( \frac{(k_i^{-1} q_i -k_i q_{i+1}) w + q_i q_{i+1} (k_i -k_i^{-1})}{(k_i^{-1} -k_i) w + (k_i q_i -k_i^{-1} q_{i+1})} ,
\frac{k_i^{-1} B_i (q_i -q_{i+1}) z}{(k_i^{-1} -k_i) w + (k_i q_i -k_i^{-1} q_{i+1})} \right) \\
& = & (\xi_i (w) , k_i^{-1} B_i \sqrt{\xi_i' (w)} z ) \, .
\end{eqnarray*}
The above formulas are going to enable us to understand the solutions of Halphen vector fields from the point of view worked out in this work.
Let us first consider the special case where $1/m_1 + 1/m_2 +1/m_3 =1$. In this case the three homographies $\xi_1 ,\xi_2 , \xi_3$ share a common fixed
point. Choosing coordinates $(w,z)$ where this point is $\infty$ it follows that $\xi'(w)$ is constant and thus $\Xi_i (w,z) = (\xi_i (w) , A_i z)$
for certain constants $A_i$, $i=1,2, 3$. As a sort of converse to Theorem~\ref{maintheo}, we obtain the following:

\begin{prop}
\label{entireHalphen}
Suppose that $1/m_1 + 1/m_2 +1/m_3 =1$. Then all the leaves of $\fol^X$ are uniformized by $\C$ as Riemann surfaces.
\end{prop}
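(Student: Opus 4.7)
The plan is to reduce the statement to an assertion about leaves of $\fol_\infty^X$ and then to exploit the fact that under the hypothesis $1/m_1+1/m_2+1/m_3 = 1$ the global holonomy group of $\fol_\infty^X$ is a planar crystallographic triangle group. By item~(1) of the discussion following the definition of regular leaves in Section~\ref{sechom}, a non-compact leaf $L$ of $\fol^X$ and its projection $L_\infty = \calp_{\infty}(L) \subset \Delta_\infty$ are of the same conformal type, and $L$ is moreover an abelian covering of $L_\infty$. Since coverings of parabolic Riemann surfaces are parabolic, it suffices to show that every leaf of $\fol_\infty^X$ has $\C$ as universal cover.

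First I would use the explicit expression $\Xi_i(w,z) = (\xi_i(w), A_i z)$, valid in the coordinate $w$ placing the common fixed point of $\xi_1, \xi_2, \xi_3$ at $\infty$: in this coordinate the global holonomy group $\Gamma \subset {\rm PSL}\,(2,\C)$ acts on the generic fiber $\overline{C} \setminus \{\infty\} \simeq \C$ by affine transformations generated by rotations of orders $m_1, m_2, m_3$ with $\xi_1\xi_2\xi_3 = {\rm id}$. Under $1/m_1+1/m_2+1/m_3 = 1$, the group $\Gamma$ is one of the three classical planar crystallographic triangle groups $(2,3,6)$, $(2,4,4)$, $(3,3,3)$; it acts cocompactly on $\C$ with quotient the Euclidean orbifold $B = (\CP(1); m_1, m_2, m_3)$, whose orbifold universal cover is $\C$.

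Next I would interpret a leaf $L_\infty$ of $\fol_\infty^X$ as a covering of $B$. Away from the three invariant lines $C_i$, this is just the projection induced by the transverse pencil $\fol_\infty^Y$. Near $C_i$, the eigenvalue data at $p_i$ and $q_i$ yield the local normal forms $-u\,\partial/\partial u \pm m_i v\,\partial/\partial v$, whose generic leaves $v = c\,u^{\mp m_i}$ are smoothed out by the local orbifold covering $w \mapsto v = w^{m_i}$. Consequently $L_\infty$ is an \emph{unramified} orbifold cover of $B$, and hence a quotient $\C/H$ for some torsion-free subgroup $H \leq \Gamma$. Since the elements of finite order in each of the three crystallographic groups are precisely their rotations, every torsion-free subgroup consists of translations only; so $L_\infty$ is isomorphic to $\C$, to a cylinder $\C^{\ast}$, or to an elliptic curve, and in every case is uniformized by $\C$. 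The main obstacle I anticipate is the rigorous identification of the orbifold covering structure, which requires a careful treatment, via the vanishing-end convention of Section~\ref{sechom} and the local holonomies prescribed by the eigenvalues $(1,1,-1)$ at $P$ and $(-1,-1,\pm m_i)$ at $p_i, q_i$, of the degenerate leaves lying along the separatrices $C_i$ themselves and of those through the central radial singularity $P$; once this combinatorial picture is in place, the Euclidean identity $\sum 1/m_i = 1$ does all the work.
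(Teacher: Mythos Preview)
Your approach is correct in outline and takes a genuinely different route from the paper's. You reduce to $\fol_\infty^X$ via the abelian-covering relation of Section~\ref{sechom} and then argue through the suspension picture: under $\sum 1/m_i = 1$ the holonomy group $\Gamma$ is one of the three Euclidean crystallographic triangle groups, the base orbifold $B=(\CP(1);m_1,m_2,m_3)$ has orbifold universal cover $\C$, and each completed leaf of $\fol_\infty^X$ is an orbifold cover $\C/H$, hence parabolic. The paper instead works directly with the three-dimensional vector field and its semi-global flow: since in the affine coordinate the multipliers $A_i$ are constants (indeed roots of unity, as $\Xi_i^{m_i}={\rm id}$), every intersection of a generic leaf $L$ with the transversal $\Ff_1$ lies at the \emph{same} height $|z|=|z_0|$ above $\Delta_\infty$; hence the time-form $dT_L$ is uniformly bounded on a neighbourhood of those intersections, and the solution $\phi$ extends over discs of uniform radius there, forcing the maximal semi-complete domain ${\bf U}$ to equal $\C$.

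Your argument makes the role of the identity $\sum 1/m_i=1$ conceptually transparent (it is exactly the Euclidean orbifold condition) and is closer to the classical uniformisation of Riccati foliations, but it carries the bookkeeping costs you already flag: the orbifold-covering identification must be verified across both singular points $p_i,q_i$ on each $C_i$ and at the radial point $P$; the leaf through the common fixed point $\infty\in\overline C$ has stabiliser all of $\Gamma$, so your torsion-free conclusion does not apply to it and it must be handled separately; and the invariant rational curves $C_i$ themselves sit outside the suspension picture. The paper's argument bypasses all of this and, more to the point, is the template the authors immediately reuse: Proposition~\ref{hyperbolic-vs-entire} and Lemma~\ref{finishing2} both turn on whether the $z$-height along the $\Gamma$-orbit in $\Ff_1$ stays bounded away from zero, which is precisely the mechanism isolated here.
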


\begin{proof}
The proof is rather  simple. Let $L$ be a generic leaf of $\fol_X$ and let $\Ff_1$ be as above. Because all the $A_i$'s are constant,
the intersection points $p_1, p_2 \ldots$ between $L$ and $\Ff_1$ have their distance to $\Delta_{\infty}$ bounded from below by a positive constant.
Thus the time-form $dT_L$ induced by $X$ on $L$ is uniformly bounded on a neighborhood $W$ of $\{ p_1, p_2 \ldots \}$.
Consider now the maximal domain ${\bf U} \subseteq \C$ of a solution $\phi$ of $X$. Suppose for a contradiction that $\C \setminus {\bf U} \neq \emptyset$
and choose a point $T \in \C$ lying in the boundary $\partial {\bf U}$ of ${\bf U}$. Finally let $t_1, t_2, \ldots $ be a sequence of points in ${\bf U}$ converging to $T$.
Given the above mentioned structure of $\fol_X$ as a foliation transverse to a fibration, we can assume without loss of generality that $\phi (t_j)$ lies in $W$.
Since $dT_L$ is uniformly bounded on $W$, it follows that $\phi$ is defined on a disc of uniform (positive) radius about each $t_j$. This is however impossible
since $t_j \rightarrow T \in \partial {\bf U}$. The proposition is proved.
\end{proof}

From now on let us focus on the more interesting case where $\xi_1, \xi_2, \xi_3$ generate a hyperbolic triangle group,
i.e. in the case where $1/m_1 + 1/m_2 +1/m_3 < 1$. The fixed points of
$\xi_1, \xi_2, \xi_3$ are three (mutually different) points $q_1 ,q_2, q_3 \in \overline{C}$. By this we mean that for each $i \in \{ 1,2,3 \}$, the
homography $\xi_i$ fixes the points $q_i, q_{i+1}$ (where $q_{3+1} =q_1$). In the present case there is no coordinate $w$ where all the
$\xi_i$ become affine maps. Thus we shall need to work with the full information provided by the action of $\Xi_1, \Xi_2, \Xi_3$ on $\Ff_1$. We are going to
show that the geometry of the leaves is related to the Poincar\'e series with exponent $1/2$. Next let us denote by $\Gamma$ the group generated by
$\xi_1, \xi_2, \xi_3$ and consider its Cayley graph with respect to the generating set given by $\xi_1, \xi_2, \xi_3$ and their inverses. Choose a ``geodesic ray''
$\gamma_0 ={\rm id}, \gamma_1, \gamma_2 , \ldots$
in the Cayley graph going from the identity to an ``end'' of the graph. We have:

\begin{prop}
\label{hyperbolic-vs-entire}
Suppose the the holonomy group is a hyperbolic triangle group. Let $L$ denote a leaf of $\fol_X$ passing through a point
$(w_0, z_0)$. As a Riemann surface $L$ is hyperbolic provided that the series
\begin{equation}
S (w_0) = \sum_{j=0}^{\infty} \Vert \gamma_j (w_0) \Vert^{1/2} \label{georay}
\end{equation}
is convergent for more than one geodesic ray. If this series diverges for all geodesic rays as above then $L$ is a quotient of $\C$.
\end{prop}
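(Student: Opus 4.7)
The plan is to translate $S(w_0)$ into the total $X$-time along canonical arcs in the leaf $L$ exiting to infinity, and then to read off the uniformization type of $L$ from the resulting shape of the maximal domain $\mathbf{U}\subset\C$ of the semi-complete flow.

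First I would iterate the holonomy formula $\Xi_i(w,z)=(\xi_i(w),k_i^{-1}B_i\sqrt{\xi_i'(w)}\,z)$ along a word $\gamma_j=\xi_{i_j}\cdots\xi_{i_1}$ in $\Gamma$. Using $|k_i|=1$ (because $k_i^2$ is a root of unity) and $|B_i|=1$ (because in the linearizing coordinate for $\xi_i$ the quantity $B_i$ is a constant satisfying $B_i^{m_i}=1$, by Equation~(\ref{product=1})), together with the chain rule applied to the square-root factors, one finds that the orbit of $(w_0,z_0)$ meets the fiber $\{w=w_0\}$ precisely in the points $(w_0,z_j)$ with $|z_j|=|z_0|\,|\gamma_j'(w_0)|^{1/2}$. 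With the natural convention $\|\gamma_j(w_0)\|=|\gamma_j'(w_0)|$, the $j$-th term of $S(w_0)$ equals $|z_j|/|z_0|$.

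Next I would estimate the $X$-time from tile to tile. Since $d=2$, the time-form on $L$ reads $dT_L=(z/F(w))\,dw$ in the base coordinate (cf.~Equation~(\ref{timeform})). Fix a compact fundamental domain $D$ for the $\Gamma$-action on $\overline{C}\setminus\{q_1,q_2,q_3\}$, thickened so as to avoid the singularities of $\tilf_\infty$; over the lift of $D$ through $(w_0,z_j)$ the denominator $F$ is bounded below while the $z$-coordinate stays comparable to $z_j$ by the contractive-holonomy estimates of Section~\ref{sechom}, so the $X$-time to cross this tile is $O(|z_j|)$. Concatenating the tiles indexed by a geodesic ray $\mathrm{id}=\gamma_0,\gamma_1,\gamma_2,\ldots$ in the Cayley graph yields a properly embedded arc in $L$ leaving every compact subset and of total $X$-time $O(|z_0|\,S(w_0))$. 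Under the parametrization $\phi:\mathbf{U}\to L$, this arc lifts to an arc in $\mathbf{U}\subset\C$ whose euclidean length equals the $X$-time in question. Thus convergence of $S(w_0)$ along a ray forces the lifted arc to terminate at a point of $\partial\mathbf{U}$, whereas divergence forces it to exit every compact subset of $\C$.

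Two geodesic rays diverging to distinct points of the Gromov boundary of $\Gamma$ and whose $S$-sums both converge therefore yield two distinct boundary points of $\mathbf{U}\subset\C$; this rules out both $\mathbf{U}=\C$ and $\mathbf{U}=\C\setminus\{\ast\}$ and forces $L$ to be uniformized by the disc. Conversely, if $S(w_0)$ diverges along every geodesic ray, then every asymptotic direction of the $\Gamma$-tiling of $L$ sends $\phi^{-1}$ off to infinity, so no boundary point of $\mathbf{U}$ can arise from the end structure of $L$; since for a cocompact triangle group the cone points $q_i$ close up after $m_i$ turns and contribute no additional boundary, one obtains $\mathbf{U}=\C$ (or $\C\setminus\{\ast\}$), i.e.\ $L$ is a quotient of $\C$. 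The main obstacle I anticipate is the end-theoretic matching: checking that every boundary point of $\mathbf{U}$ is indeed reached by a geodesic ray and that rays diverging to distinct points of $\partial_\infty\Gamma\simeq S^1$ produce genuinely distinct accumulation points in $\C$. This requires combining the quasi-isometry between the Cayley graph of $\Gamma$ and $\mathbb H^2$ with a careful analysis of the embedding of the $\Gamma$-tiling of $L$ into $\mathbf{U}$ via $\phi^{-1}$, and constitutes the technical core of the argument.
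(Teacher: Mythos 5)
Your proposal follows essentially the same route as the paper: identify the $j$-th term of the reduced Poincar\'e series with the height $\vert z_j\vert = \vert z_0\vert\,\vert\gamma_j'(w_0)\vert^{1/2}$ of the $j$-th fiber crossing (using $\vert k_i^{-1}B_i\vert=1$), convert these heights into the euclidean size of the tiles $U_{(j)}$ that the fundamental domains carve out of the maximal domain $\mathbf{U}\subset\C$ under the time-coordinate map, and conclude that convergence along a geodesic ray produces boundary for $\mathbf{U}$ while divergence along every ray forces $\mathbf{U}=\C$. The only cosmetic difference is in the hyperbolic case: you extract two distinct boundary points from two convergent rays, whereas the paper produces a small continuum in $\partial\mathbf{U}$ from a single convergent ray; both suffice.
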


\begin{proof}
The proof consists of elaborating further on the argument used in Proposition~\ref{entireHalphen}. Again denote by
${\bf U}$ the maximal domain of definition of the solution $\phi$ of $X$ satisfying $\phi (0) =(w_0, z_0)$, $z_0 \neq 0$.
Denote by $L_{\infty}$ the projection of $L$ on $\Delta_{\infty}$. By virtue of the structure of the foliation $\fol_{\infty}^X$ on
$\Delta_{\infty}$, we know that $L_{\infty}$ is a ramified covering of $\C P(1)$ where the ramified points sit over three
points of $\C P(1)$ (identified to the three invariant fibers of $\fol_{\infty}^X$). We can then think of $L_{\infty}$ as being the universal
covering of $\CP(1)$ minus $3$ points modulo adding to it the ramification points. In particular there is a natural sense in considering
{\it fundamental domains}\, in $L_{\infty}$. The leaf $L_{\infty}$ can then be considered as the union of the corresponding
fundamental domains $L_{\infty}^{(0)},  L_{\infty}^{(1)}, \ldots$ such that $L_{\infty}^{(j)} = \gamma_j (L_{\infty}^{(0)})$. These domains
have natural lifts to the leaf $L \subset M\setminus (\Delta_0 \cup \Delta_{\infty})$ which, modulo re-numeration, will be denoted by
$L^{(0)},  L^{(1)}, \ldots$ in such way that $L^{(j)} = \Gamma_j (L^{(0)})$ where $\Gamma_j$ is the automorphism of $\Ff_1$ corresponding
to the action of $\gamma_j$ on $\overline{C}$.

On $L$ (or on its universal covering if necessary), we define the map
$$
\mathcal{D}_L (p) = \int_{(w_0, z_0)}^p dT_L
$$
where $dT_L$ stands for the time-form induced on $L$. Since $X$ is semi-complete, $\mathcal{D}_L$ provides a diffeomorphism
from the (universal covering of) $L$ to ${\bf U}$. Let then $U_{(j)}$ be the image of $L^{(j)}$ by $\mathcal{D}_L$. The
above assertion implies that the set $U_{(j)}$ ``tile'' ${\bf U}$ without overlapping and modulo adding the image of ramification points
(involved in the preceding definition of the fundamental domains $L_{\infty}^{(0)},  L_{\infty}^{(1)}, \ldots$). Next recall that
the affine structure on $L_{\infty}$ is uniformly bounded (from below and by above). Combining this fact to the expression for
$dT_L$ arising from Formula~\ref{tXX}, with $d=2$, it follows the existence of sequences $\{ r_j\}$, $\{ R_j\}$,
$0<r_j < R_j$, $j =1, 2 ,\ldots$, satisfying the conditions below.
\begin{enumerate}
\item There are constants $0<c<C$, independent of $j$, such that
$$
c \Vert \pi_2 (\Xi_j (w_0, z_0)) \Vert  r_j < R_j < C \Vert \pi_2 (\Xi_j (w_0, z_0)) \Vert  r_j \,   ,
$$
where $\pi_2$ stands for the projection on the second coordinate (i.e. the fiber of $\Ff_1$).

\item The image of $U_{(j)}$ under $\mathcal{D}_L$ contains a ball of radius $r_j$ about $\mathcal{D}_L (\Xi_j (w_0, z_0))$. Similarly
the same image is contained in ball of radius $R_j$ about $\mathcal{D}_L (\Xi_j (w_0, z_0))$.
\end{enumerate}

It then becomes clear that ${\bf U}$ must be the whole $\C$ provided that the series $\sum_{j=0}^{\infty} \Vert \pi_2 (\Xi_j (w_0, z_0) \Vert$
diverges for every geodesic ray. Conversely, if this series
is convergent, then we can easily construct a ``small'' piece of continuum contained in the boundary of ${\bf U} \subset \C$. Thus ${\bf U}$
must be a hyperbolic domain so that $L$ itself must be a Riemann surface covered by the unity disk. To conclude the proof of the proposition
it is therefore sufficient to check that the series $\sum_{j=0}^{\infty} \Vert \pi_2 (\Xi_j (w_0, z_0) \Vert$ converges (resp. diverges) if and only
if so does the ``reduced Poincar\'e series'' in the statement. This is however clear since $\vert k_i^{-1} B_i \vert =1$ as an
immediate consequence of Formula~(\ref{product=1}). The proof of the proposition is over.
\end{proof}

Next we are going to show that the series~(\ref{georay}) always converges provided that $w_0 \in \overline{C} \setminus
\Lambda_{\infty}$. Some indications concerning the behavior of this series in the case $w_0 \in \Lambda_{\infty}$ will also be provided. As mentioned
the series in question differs from the usual Poincar\'e series since the sum is not carried over the entire group but only over those elements
belonging to a chosen ``geodesic ray''. Indeed the ``full'' Poincar\'e series of $\Gamma$ with exponent $1/2$ diverges as it follows from well-known
results due mainly to Sullivan (see \cite{bulletin} for an overview of the standard theory).

\begin{lema}
\label{alwaysconverging}
The series~(\ref{georay}) converges provided that $w_0 \in \overline{C} \setminus \Lambda_{\infty}$.
\end{lema}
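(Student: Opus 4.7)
The first thing to do is to reinterpret the terms of $S(w_0)$. From the explicit formula $\Xi_i(w,z) = (\xi_i(w), k_i^{-1} B_i \sqrt{\xi_i'(w)}\, z)$ together with $|k_i^{-1}B_i|=1$ (used already at the end of the proof of Proposition~\ref{hyperbolic-vs-entire}), one has $\|\pi_2(\Xi_j(w_0,z_0))\| = |\gamma_j'(w_0)|^{1/2} |z_0|$, so that what needs to be shown is the convergence of $\sum_j |\gamma_j'(w_0)|^{1/2}$, where $\gamma_j'$ denotes the complex derivative of the M\"obius transformation $\gamma_j$ at $w_0$.

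The plan is then to estimate each of these derivatives through the Poincar\'e metric on the component $\Omega \subset \overline{C}\setminus \Lambda_{\infty}$ containing $w_0$. Under the hypothesis~(\ref{stillacrossreference}) the group $\Gamma = \langle \xi_1,\xi_2,\xi_3\rangle$ is conjugate to a cocompact Fuchsian triangle group, so $\Omega$ is a topological disc equipped with a complete hyperbolic metric $\rho_{\Omega}$ preserved by $\Gamma$, and the orbifold $\Gamma\backslash \Omega$ is compact. Because $\Lambda_{\infty}$ is a round circle, one also has the classical comparison $\rho_{\Omega}(z) \asymp 1/d_{\mathrm{Euc}}(z,\Lambda_{\infty})$ valid up to the boundary. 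By the \v{S}varc-Milnor lemma, the orbit map $\gamma \mapsto \gamma(w_0)$ is a quasi-isometry from $(\Gamma, d_{\mathrm{word}})$ to $(\Omega, d_{\mathbb{H}})$; along any geodesic ray $\gamma_0 = \mathrm{id}, \gamma_1, \gamma_2, \ldots$ in the Cayley graph one has $|\gamma_j|_{\mathrm{word}}=j$, which translates into $d_{\mathbb{H}}(w_0, \gamma_j(w_0)) \geq c\, j - C$ for some $c>0$ independent of $j$ and of the particular ray chosen.

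Integrating $\rho_{\Omega}$ along a minimising hyperbolic path from a fixed base point to $\gamma_j(w_0)$ yields $d_{\mathrm{Euc}}(\gamma_j(w_0), \Lambda_{\infty}) \leq A\, e^{-cj}$. The isometry property of the $\Gamma$-action then gives the identity $|\gamma_j'(w_0)| = \rho_{\Omega}(w_0)/\rho_{\Omega}(\gamma_j(w_0))$, and combining this with the density comparison bounds $|\gamma_j'(w_0)| \lesssim e^{-cj}$, whence $|\gamma_j'(w_0)|^{1/2} \lesssim e^{-cj/2}$ and $S(w_0)$ is dominated by a convergent geometric series. The genuinely non-trivial ingredient is the \v{S}varc-Milnor step, namely that a Cayley-graph geodesic-ray condition is comparable to a geometric displacement condition on $\Omega$; this relies on the cocompactness of the $\Gamma$-action, which itself is specific to the hyperbolic triangle-group regime and is ultimately what forces the exponential decay behind the proof.
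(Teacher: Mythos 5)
Your argument is correct and is essentially the paper's own proof: both hinge on the quasi-isometry between the Cayley graph and the hyperbolic plane (so that $d_{\mathbb{H}}(w_0,\gamma_j(w_0))$ grows linearly in $j$) and then convert linear hyperbolic displacement into exponential decay of $|\gamma_j'(w_0)|$, giving domination by a geometric series. The only cosmetic difference is that the paper normalizes to $w_0=0$ in the disc model and computes $\|\gamma_j'(0)\|$ explicitly from the hyperbolic metric coefficients, whereas you phrase the same estimate via the conformal-invariance identity $|\gamma_j'(w_0)|=\rho_{\Omega}(w_0)/\rho_{\Omega}(\gamma_j(w_0))$ and the Koebe-type comparison of $\rho_{\Omega}$ with the reciprocal boundary distance.
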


\begin{proof}
The argument is rather simple. Consider the action of $\Gamma$ in the $w$-plane. This action preserves a circle identified to $\Lambda_{\infty}$.
In particular $\Gamma$ is realized as a Fuchsian group, i.e. a discrete group of automorphisms of the hyperbolic disc. Next, since $\Gamma$
acts on the hyperbolic disc, it is easy to see that the convergence of this series does not depend on $w_0$. In other words, the series~(\ref{georay})
converges for $w_0$ if and only if it converges at $0$. It is then sufficient to check that the series converges for $w_0 =0$ (identified to the origin
of the disc). For this consider again the geodesic ray $\gamma_0 ={\rm id}, \gamma_1, \gamma_2 , \ldots$ in the Cayley graph of $\Gamma$ and set
$a_j = \gamma_j (0)$. In the mentioned Cayley graph the distance between the identity and $\gamma_j$ is obviously $j$. The existence of a quasi-isometry
between this graph and the hyperbolic disc implies that the hyperbolic distance $d_H (0, \gamma_j (0))$ between $0$ and $\gamma_j (0)$ satisfies
$c j < d_H (0, \gamma_j (0)) < C j$
for appropriate uniform constants $C > c >0$. The standard formula for the length of a minimizing geodesic in the hyperbolic unit disc joining $0$ to a
point $a$ of this disc (naturally satisfying $\Vert a \Vert <1$) yields
$$
\frac{e^{cj} -1}{e^{cj} + 1} \leq \Vert a_j \Vert \leq \frac{e^{Cj} -1}{e^{Cj} + 1} \, .
$$
On the other hand the coefficients of the hyperbolic metric at $0$ and at $a_j$ allow us to obtain a formula for the derivative of $\gamma_j$ at $0$. Combined
with the above estimates this formula gives
$$
\Vert \gamma_j' (0) \Vert \leq \frac{4e^{Cj}}{(1+ e^{Cj})^2}
$$
and thus $\Vert \gamma_j' (0) \Vert^{1/2} \leq 2 e^{Cj/2} / (e^{Cj} +1)$. The convergence of the mentioned series follows immediately.
\end{proof}

To close this discussion let us briefly indicate the behavior of the series~(\ref{georay}) for points $w_0$ lying
in $\Lambda_{\infty}$. Since $\gamma_j$ takes $0$ to $a_j$, it follows that $\gamma_j (w) = e^{2\pi i \theta} (w+a_j)/
(1 + \overline{a}_j w)$, for some $\theta \in [0,1)$. In particular
$$
\Vert \gamma_j' (w) \Vert = \frac{1 - \Vert a_j \Vert^2}{(1 + \overline{a}_j w)^2} \, .
$$
Because $\Gamma$ is discrete, it follows that $\Vert a_j \Vert \rightarrow 1$ as $j \rightarrow + \infty$. Set $a_j = \Vert a_j \Vert e^{i\theta_j}$ and
$w_0 = e^{-i\theta_j -\pi + \alpha_j}$ so that
\begin{equation}
\Vert \gamma_j' (w_0) \Vert = \frac{1 - \Vert a_j \Vert^2}{1 + \Vert a_j \Vert^2 -2\Vert a_j \Vert \cos (\alpha_j)} \, . \label{justtobequoted}
\end{equation}
Next note that $\Vert \gamma_j' (w_0) \Vert > 1/2$ as long as $\cos (\alpha_j) > \Vert a_j \Vert$. In particular if there are infinitely many indices $j$
satisfying this condition the corresponding series will diverge. In this case there is a
subsequence of $\{a_j\}$ converging ``almost radially'' for $-w_0$. Conversely if the denominator in~(\ref{justtobequoted}) is bounded from below by
some positive constant, then the argument used in Lemma~\ref{alwaysconverging} ensures again the convergence of series~(\ref{georay}). In general
we are led to a finer analysis taking into account the ``conic approximation'' of $-z_0$ by the sequence $a_j =\gamma_j (0)$. Not surprisingly the behavior of
series~(\ref{georay}) on $\Lambda_{\infty}$ depends on the initial point $w_0$: for some values of $w_0$ it diverges whereas for others it is convergent.

Most properties of Halphen vector fields become encoded in the extended dynamics of the group generated by
$\Xi_1, \Xi_2, \Xi_3$ on $\Ff_1$. For example the
study of first integrals for Halphen vector fields amounts to searching functions that are invariant by this action.
In particular on each connected component of $\overline{C} \setminus \Lambda_{\infty}$ it is not hard to construct
``automorphic functions'' for this group so that on the corresponding open sets on $M$ the Halphen vector field possesses
a holomorphic first integral. It is also not very hard to check that the Halphen vector field does not admit a holomorphic
(or meromorphic) first integral on the set corresponding to $\Lambda_{\infty}$ (which has real dimension equal to~$5$).
Yet on the latter set, there is a real-valued first integral for the equation that is actually globally defined on $M$.
We shall not pursue this type of discussion here not only because Halphen vector fields were detailed studied in \cite{guillotIHES}
but also because the corresponding issues will no longer be in line with the main ideas of this paper.

\section{Appendix: some problems}

In closing we would like to suggest some problems for which the method developed in this work may provide some
insight.

\noindent $\bullet$ {\bf Equations with Painlev\'e property}. It is clear that our methods are well designed for investigating these equations, specially
if meromorphic solutions defined on all of $\C$ are targeted. Since Nevanlinna theory has become an important tool in these questions,
it would be interesting to compare both approaches which, as suggested by Theorem~A', may have some non-trivial points of contact.
Here it may also be worth reminding the reader that our methods apply equally well to rational, as opposed to polynomial, vector fields.

Also many special equations including Painlev\'e's equations admit a formulation involving associated transverse fibrations. The domain of
definition of their solutions can then be studied by adapting the argument used in Section~7.2 to handle the case of Halphen equations.
For example, consider the case of P-$VI$ which contains the remaining Painlev\'e equations as particular cases.
The standard Hamiltonian formulation of P-$VI$ allows one to consider its {\it holonomy group at infinity}\, as in \cite{lubo} which is shown
to be virtually abelian in the same paper. By construction, this group consists of holomorphic diffeomorphisms of $\C^2$. The study
of the corresponding dynamics should then recover, in particular, the domain of definition of the equations in question as well as to yield
new insight into the way they bifurcate as the parameters vary. This certainly relates to several other aspects of Painlev\'e's equations
such as global linearization and Riemann-Hilbert correspondence.

\noindent $\bullet$ {\bf Confinement questions for real equations}. These problems were brought to our attention by F. Cano and C. Roche. Let us
consider the case of singular points of real analytic vector fields as in \cite{cano1}, \cite{cano2}. Several important issues appearing in the
study of these singular points, such as existence of iterated tangents, depend strongly on having suitable estimates on the ``speed'' with which
a solution may converge to the origin. Though these problems are real rather than complex, our method can provide information on certain cases.
Let us explain the main adaptations needed for it.

The central issue is that the exceptional divisor in this context is obtained by a {\it real blow-up}\, and the foliation induced on it has
real dimension~$1$ as opposed to complex dimension~$1$. Thus there is no freedom to choose any ``steepest descent direction'', indeed, we can only follow the
given trajectory in a chosen orientation. However, under some assumptions concerning the first non-zero homogeneous component of the vector
field, it is possible to guarantee a ``normal contractive holonomy'' as the one considered in Section~3.

Once this contraction is established, the fact that we are dealing with real solutions, rather than with complex ones, becomes an advantage
for two main reasons which can more transparently be appreciated in dimension~$3$. The first one has to do with Lemma~\ref{lineatinfinity}. Recall
that $\Delta_{\infty}^{(x,y)}$ consists of source singularities for $\calh$ whereas $\Delta_{0}^{(x,y)}$ is constituted by sink singular points of $\calh$. This
is the main reason why we have worked with $\Delta_{\infty}^{(x,y)}$ rather than with $\Delta_{0}^{(x,y)}$: the fact that $\calh$ has source
singularities at $\Delta_{\infty}^{(x,y)}$ is fundamental for the proof of Lemma~\ref{remainingcompact} and hence for Theorem~\ref{introducelabel4}.
Nonetheless, in the real case, it is very often that these trajectories remain in a compact part of the ``line at infinity'' contained in $\Delta_0$.
Therefore, modulo appropriate assumptions satisfied by many foliations, the statement of Theorem~\ref{introducelabel4} remains valid
in the present setting. When working in dimension~$3$,
the situation becomes much better: dealing with real equations the divisor playing the role of $\Delta_0$ is no longer the complex projective plane but the
{\it real projective plane}. The asymptotic behavior of the real foliation induced on ``$\Delta_0$'' is therefore easily described by the standard
Poincar\'e-Bendixson theory. It is then reasonable to expect it to provide detailed information on the structure of the singular point in question.
It would be nice to know if the theory put forward by F. Cano, R. Moussu and their collaborators can be furthered by this type of analysis.

Let us also mention that the study of complete real vector fields on $\R^3$ may be approached from this point of view as pointed out to us by A. Guillot.
In particular this type of idea apply to Lorenz equation where the presence of a very special ``saddle-node singularity'' in $\Delta_{\infty}$ seems to
organize much of the information concerning the corresponding dynamics. Similar ideas concerning Lorenz systems were also considered by
X. Gomez-Mont and, independently, by J.-P. Ramis.

\noindent $\bullet$ {\bf Actions of ${\rm SL}\, (2, \C)$}. In \cite{guillotIHES} A. Guillot classified complex $3$-folds that are quasi-homogeneous under an
action of ${\rm SL}\, (2, \C)$. The fundamental point of describing non-locally free actions of ${\rm SL}\, (2, \C)$ arises from considering, necessarily
homogeneous, vector fields $X$ verifying the equation $[Y,X] = 2\cale$ where $\cale = x\partial /\partial x + y \partial /\partial y + z
\partial /\partial z$ and $Y = \partial /\partial x +  \partial /\partial y + \partial /\partial z$. These vector fields $X$ are by definition Halphen vector fields.
The same equation can be considered in every dimension to yield higher dimensional analogues of Halphen vector fields that are naturally
related to ${\rm SL}\, (2, \C)$-actions. Let $\cale$ and $Y$ still denote the constant and the radial vector fields in dimension~$n$.
The previous discussion, shows that $X$ still induces a foliation on $\Delta_{\infty}$ transverse to the linear pencil obtained
from $Y$. The interesting novelty appearing in dimensions~$\geq 4$ is that the basis of this pencil is isomorphic to
$\C P (n-2)$ which inherits of a non-trivial foliation induced by $X$ since $n \geq 4$. We do not know how ``wild'' may this latter foliation be, but an
understanding of its dynamics would allow us to generalize the arguments given in Section~7.2 and extend the results of Halphen and Guillot.

\noindent $\bullet$ {\bf Singular points of vector fields on complex K\"ahler $3$-folds}. Let $X$ be a holomorphic vector field defined on a
compact K\"ahler manifold $M$ of dimension~$3$ and consider a singular point $p \in M$ of $X$.
Denote by $X_d$ the first non-zero homogeneous component of $X$ at $p$. It is known
that $X_d$ is a (homogeneous) semi-complete vector field on all of $\C^3$. The problem is then to classify all possible models for $X_d$. As already
explained, this is equivalent to finding all possible normal forms for the {\it top-degree}\, homogeneous component of a complete polynomial
vector field on $\C^3$.

Let us consider the foliation $\tilf_{\infty}$ induced by $X_d$ on $\Delta_{\infty}, \, \Delta_0$ and we assume once and for all that
$\tilf_{\infty}$ is not a pencil.
Since all leaves of $\tilf_{\infty}$ are parabolic Riemann surfaces, we can apply McQuillan theorem, as formulated in \cite{book}, to conclude that
$\tilf_{\infty}$ is transverse to a pencil of genus either~$0$ or~$1$. The idea is then to resort again to arguments similar to those developed in Section~7.2
to work out the structure of $\tilf_{\infty}$. The first thing to be proved is that the global holonomy arising from $\tilf_{\infty}$ is conjugate to a subgroup of the
affine group of $\C$. In particular it has a fixed point corresponding to an algebraic curve $C$ invariant by $\tilf_{\infty}$.
Next we should consider the ``extended'' holonomy of $\tilf$ taking values on the group of automorphisms of $F_1$ as previously done.
The resulting group will still be elementary and it must be ``compatible''
with the affine structure induced on $C$ which is necessary uniformizable.
Finally the cases in which the genus of $C$ is equal to~$0$ and to~$1$ must separately be considered. The most interesting case corresponds to genus~$0$
since there are more possibilities for the affine structure on $C$. Besides ``rational orbits'', we must expect to find the ``elliptic'' orbifolds associated
to the triangle groups $(2,2, \infty)$, $(2,3,6)$, $(2,4,4)$ and $(3,3,3)$ in addition to the orbifold $(2,2,2,2)$ which appear in connexion with the classical
integrable equations of the Euler top spin.

\begin{flushleft}
{\sc Julio Rebelo} \\
Institut de Math\'ematiques de Toulouse\\
118 Route de Narbonne\\
F-31062 Toulouse, FRANCE.\\
rebelo@math.univ-toulouse.fr

\end{flushleft}

\begin{flushleft}
{\sc Helena Reis} \\
Centro de Matem\'atica da Universidade do Porto, \\
Faculdade de Economia da Universidade do Porto, \\
Portugal\\
hreis@fep.up.pt \\

\end{flushleft}

\end{document}